\documentclass[12pt,a4paper]{amsart}
\usepackage{amsmath}
\usepackage{amssymb}
\usepackage[english]{babel}
\usepackage{verbatim}
\usepackage[shortlabels]{enumitem}
\usepackage{color}
\usepackage{tikz-cd}

\newtheoremstyle{mio}%
	{}{} 
	{\itshape}{} 
	{\bfseries}{.}{ } 
	{#1 #2\thmnote{~\mdseries(#3)}} 
\theoremstyle{mio}
\newtheorem{teor}{Theorem}[section]
\newtheorem{cor}[teor]{Corollary}
\newtheorem{prop}[teor]{Proposition}
\newtheorem{lemma}[teor]{Lemma}
\newtheorem{defin}[teor]{Definition}

\newtheoremstyle{definition2}%
	{}{} 
	{}{} 
	{\bfseries}{.}{ } 
	{#1 #2\thmnote{\mdseries~ #3}} 
\theoremstyle{definition2}
\newtheorem{ex}[teor]{Example}
\newtheorem{oss}[teor]{Remark}

\newcommand{\insN}{\mathbb{N}}
\newcommand{\inN}{\in\insN}
\newcommand{\insR}{\mathbb{R}}
\newcommand{\inR}{\in\mathbb{R}}

\newcommand{\Max}{\mathrm{Max}}
\newcommand{\insid}{\mathcal{I}}
\newcommand{\insfracid}{\mathcal{F}}
\newcommand{\insstar}{\mathrm{Star}}
\newcommand{\insfstar}{\mathrm{FStar}}
\newcommand{\inssmstar}{\mathrm{(S)Star}}
\newcommand{\inssemistar}{\mathrm{SStar}}
\newcommand{\inssubmod}{\mathbf{F}}
\newcommand{\insmult}{\mathrm{Mult}}
\newcommand{\insclos}{\mathrm{Clos}}

\newcommand{\val}{\mathbf{v}}
\newcommand{\princ}{\ast}
\newcommand{\ordine}{\preceq}
\newcommand{\ordinestretto}{\prec}
\newcommand{\Jac}{\mathrm{Jac}}
\newcommand{\valut}{\mathbf{v}}
\newcommand{\nz}{\bullet}

\title{Multiplicative closure operations on ring extensions}
\author{Dario Spirito}
\date{\today}
\address{Dipartimento di Matematica e Fisica, Universit\`a degli Studi ``Roma Tre'', Roma, Italy}
\email{spirito@mat.uniroma3.it}
\subjclass[2010]{13A15; 13B99; 13F10; 13G05}
\keywords{Closure operations; star operations; semiprime operations; ring extensions.}

\begin{document}

\begin{abstract}
Let $A\subseteq B$ be a ring extension and $\mathcal{G}$ be a set of $A$-submodules of $B$. We introduce a class of closure operations on $\mathcal{G}$ (which we call \emph{multiplicative operations on $(A,B,\mathcal{G})$}) that generalizes the classes of star, semistar and semiprime operations. We study how the set $\insmult(A,B,\mathcal{G})$ of these closure operations vary when $A$, $B$ or $\mathcal{G}$ vary, and how $\insmult(A,B,\mathcal{G})$ behave under ring homomorphisms. As an application, we show how to reduce the study of star operations on analytically unramified one-dimensional Noetherian domains to the study of closures on finite extensions of Artinian rings.
\end{abstract}

\maketitle

\section{Introduction}
Let $(\mathcal{P},\leq)$ a partially ordered set. A \emph{closure operation} on $(\mathcal{P},\leq)$ is a map $c:\mathcal{P}\longrightarrow\mathcal{P}$ that is extensive ($x\leq c(x)$ for all $x$), order-preserving (if $x\leq y$, then $c(x)\leq c(y)$) and idempotent ($c(c(x))=c(x)$ for all $x$). In commutative algebra, there are several classes of closure operations in use, where the set $\mathcal{P}$ is usually a set of ideals of a ring $R$ or a set of submodules of a given $R$-module; to be connected with the algebraic properties of the ring (or the module), the definition of such classes usually includes a multiplicative property relating the product $x\cdot c(I)$ with the closure $c(xI)$, where $x$ varies among the elements of the ring and $I$ among the set of ideals or submodules considered. The three main classes of closure operations are semiprime operations, star operations and semistar operations (see Section \ref{sect:background} for the definitions).

These three classes are defined very similarly, and it is natural to think that they are somewhat interrelated. Indeed, star and semistar operations are often introduced and studied together (and the latter were actually born as a generalization of the former \cite{okabe-matsuda}); on the other hand, restricting a semistar operation we get a semiprime operation, and this correspondence can partly be inverted \cite{epstein-corresp}. However, the study of these classes is usually pursued in different contexts: star and semistar operations are defined only in the integral domain setting (although they can be generalized: see \cite{elliott-libro}), and their study is often connected with Pr\"ufer domain and their generalizations, while semiprime operations can be defined for arbitrary rings, and are studied especially in the Noetherian context. In particular, a major point of difference is that the most useful semiprime operations have some functorial properties, while star and semistar operations usually behave very badly under quotients, with only some sparse exception \cite{fontana-park}.

In this paper, we define and study \emph{multiplicative operations}, a class of closure operations that encompasses these three classes of closures, allowing to study them in a unified way. Following the concept of \emph{star operation on an extension} introduced by Knebush and Kaiser \cite[Chapter 3, Section 3]{knebush-II}, our setting will always be a ring extension $A\subseteq B$, with no hypothesis on $A$ and $B$, and our partially ordered set will be a subset $\mathcal{G}$ of the set $\inssubmod_A(B)$ of $A$-submodules of $B$ which is, in principle, arbitrary (but we will need some hypothesis to get good properties). The main advantage of our definition is the possibility of varying $B$ and $\mathcal{G}$: for example, with $B=A$ and $\mathcal{G}=\inssubmod_A(A)$ (i.e., $\mathcal{G}$ is the set of all ideals) multiplicative operations reduce to semiprime operations, while if $A$ is a domain, $B$ its quotient field and $\mathcal{G}=\inssubmod_A(B)$ we obtain the semistar operations. In Sections \ref{sect:Gprinc} and \ref{sect:functoriality}, we show how the choice of $B$ and $\mathcal{G}$ influences the set $\insmult(A,B,\mathcal{G})$ of the multiplicative operations on $(A,B,\mathcal{G})$, and how the flexibility of the definition allows to prove functorial properties. We also show how this point of view allows to generalize the study of semiprime operations of discrete valuation domains done in \cite{vassilev_structure_2009} to arbitrary one-dimensional valuation domains (Example \ref{ex:dimV1}).

In Sections \ref{sect:PID} and \ref{sect:Artin}, we specialize to the case where $B$ is a principal ideal domain: in particular, we show how in this context it is possible to relate the multiplicative operations on $(A,B,\mathcal{G})$ with the multiplicative operations on a quotient $(A/I,B/I,\mathcal{G}')$, where $I$ is a common ideal of $A$ and $B$ and $\mathcal{G}'$ depends on $\mathcal{G}$. When $A$ is a one-dimensional Noetherian domain with finite normalization, we show how the set of star operations on $A$ can be interpreted as the set of multiplicative operations on a proper quotient of $A$; in particular, if $B$ is the normalization of $A$ and we fix the length $\ell_A(B/A)$, we only get finitely many possibilities for the set of star operations, giving an ``high-level'' explanation of some results of \cite{houston_noeth-starfinite}. This line of investigation, together with explicit bounds on the number of star operations, will be pursued in a forthcoming paper \cite{asymptotics-star}.

\section{Notation and background}\label{sect:background}
Throughout the paper, all rings will be commutative and unitary.

Given a ring $A$, we denote by $\insid(A)$ the set of ideals of $A$; if $M$ is an $A$-module, we denote by $\inssubmod_A(M)$ the set of $A$-submodules of $M$. If $D$ is an integral domain, a \emph{fractional ideal} of $D$ is a $D$-submodule $I$ of the quotient field of $D$ such that $dI\subseteq D$ for some $d\in D\setminus\{0\}$; we denote by $\insfracid(D)$ the set of fractional ideals of $D$. If $\mathcal{G}\subseteq\inssubmod_A(M)$, we set $\mathcal{G}^\nz:=\mathcal{G}\setminus\{(0)\}$.

\begin{defin}
Let $(\mathcal{P},\leq)$ a partially ordered set. A \emph{closure operation} on $(\mathcal{P},\leq)$ is a map $c:\mathcal{P}\longrightarrow\mathcal{P}$ such that, for every $x,y\in\mathcal{P}$:
\begin{itemize}
\item $x\leq c(x)$;
\item $x\leq y$ implies $c(x)\leq c(y)$;
\item $c(c(x))=c(x)$.
\end{itemize}
\end{defin}

We denote by $\insclos(\mathcal{P})$ the set of closure operations on $\mathcal{P}$. This set is endowed with a natural partial order, where $c_1\leq c_2$ if and only if $c_1(x)\leq c_2(x)$ for every $x\in\mathcal{P}$, or equivalently if and only if $x=c_2(x)$ implies that $x=c_1(x)$.

When $\mathcal{P}$ is a subset of $\inssubmod_A(M)$ (for some $A$ and $M$) we consider it a partially ordered set under the containment order; furthermore, we write the image of $I\in\inssubmod_A(M)$ under the closure operation $c$ by $I^c$.

The most common classes of closure operations in commutative algebra are semiprime, star and semistar operations.
\begin{itemize}[itemsep=.25em]
\item A \emph{semiprime operation} on $A$ is a closure operation $c$ on $\insid(A)$ such that $x\cdot I^c\subseteq(xI)^c$ for every $I\in\insid(A)$ \cite{petro}.

\item A \emph{fractional star operation} on an integral domain $D$ is a closure operation $\star$ on $\insfracid(D)$ such that $x\cdot I^\star=(xI)^\star$ for every $I\in\insfracid(D)$ \cite{starloc2}. We denote the set of fractional star operations by $\insfstar(D)$.

\item A \emph{star operation} on an integral domain $D$ is a fractional star operation $\star$ on $\insfracid(D)$ such that $D=D^\star$. We denote the set of star operations by $\insstar(D)$.

\item A \emph{semistar operation} on an integral domain $D$ is a closure operation $\star$ on $\inssubmod_D(K)$ (where $K$ is the quotient field of $D$) such that $x\cdot I^\star=(xI)^\star$ for every $I\in\inssubmod_D(K)$. We denote the set of semistar operations by $\inssemistar(D)$.
\end{itemize}

For the standard results on the theory of star and semistar operations, the reader may consult \cite[Chapter 32]{gilmer}, \cite{okabe-matsuda} or \cite{elliott-libro}.


We will also need the following terminology.
\begin{defin}
Let $(\mathcal{P},\leq)$ be a partially ordered set and let $\mathcal{G}\subseteq\mathcal{P}$. We say that $\mathcal{G}$ is:
\begin{itemize}
\item \emph{downward closed in $\mathcal{P}$} (or a \emph{downset}) if, whenever $x\in \mathcal{G}$ and $y\leq x$, then $y\in \mathcal{G}$;
\item \emph{upward closed in $\mathcal{P}$} (or an \emph{upset}) if, whenever $x\in \mathcal{G}$ and $x\leq y$, then $y\in \mathcal{G}$;
\item an \emph{interval in $\mathcal{P}$} if, whenever $x\leq y$ are in $\mathcal{G}$ and $x\leq z\leq y$, then also $z\in \mathcal{G}$;
\item \emph{quasi-downward closed in $\mathcal{P}$} if $\mathcal{G}$ is an interval with a minimum;
\item \emph{quasi-upward closed in $\mathcal{P}$} if $\mathcal{G}$ is an interval with a maximum.
\end{itemize}
\end{defin}
When $\mathcal{G}$ is a set of $A$-submodules of $B$, we shall silently consider $\mathcal{P}=\inssubmod_A(B)$, and drop the ``in $\mathcal{P}$''.

\section{Multiplicative operations}\label{sect:multiplicative}

\begin{defin}\label{def:main}
Let $A\subseteq B$ be a ring extension, and let $\mathcal{G}\subseteq\inssubmod_A(B)$. A \emph{multiplicative operation} on $(A,B,\mathcal{G})$ is a closure operation $\star:\mathcal{G}\longrightarrow\mathcal{G}$, $I\mapsto I^\star$ such that
\begin{itemize}
\item[$(\dagger)$] $(I:b)^\star\subseteq(I^\star:b)$ for all $I\in\mathcal{G}$, $b\in B$ such that $(I:b)\in\mathcal{G}$.
\end{itemize}
We say that $I\in\mathcal{G}$ is \emph{$\star$-closed} if $I=I^\star$, and we denote by $\mathcal{G}^\star$ the set of $\star$-closed elements of $\mathcal{G}$. We denote by $\insmult(A,B,\mathcal{G})$ the set of those maps.
\end{defin}

Condition $(\dagger)$ is somewhat different from the properties used to define semiprime or semistar operations, since it involves the conductor instead of the product of ideals. However, $(\dagger)$ is more useful in contexts where $\mathcal{G}$ is upward closed, since in this case conductors are in $\mathcal{G}$ more often than products. In the basic cases, this condition reduces to a more usual definition, as we show next.
\begin{lemma}\label{lemma:defproduct}
Let $\mathcal{G}\subseteq\inssubmod_A(B)$ and suppose that, for every $I,J\in\mathcal{G}$, $b\in B$ with $J\neq(0)$ and $b\neq 0$, we have $(I:J),(I:b),IJ,bI\in\mathcal{G}$. Let $c:\mathcal{G}\longrightarrow\mathcal{G}$ be a closure operation. The following are equivalent:
\begin{enumerate}[(i)]
\item\label{lemma:defproduct:prodb} $bI^\star\subseteq(bI)^\star$ for all $I\in\inssubmod_A(B)$ and all $b\in B$;
\item\label{lemma:defproduct:prodJ} $JI^\star\subseteq(JI)^\star$ for all $I,J\in\inssubmod_A(B)$;
\item\label{lemma:defproduct:colon} $(I:b)^\star\subseteq(I^\star:b)$ for all $I\in\inssubmod_A(B)$ and all $b\in B$.
\end{enumerate}
\end{lemma}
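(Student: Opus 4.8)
The plan is to prove the cycle of implications $(\ref{lemma:defproduct:prodJ})\Rightarrow(\ref{lemma:defproduct:prodb})\Rightarrow(\ref{lemma:defproduct:colon})\Rightarrow(\ref{lemma:defproduct:prodJ})$, using the standing hypothesis on $\mathcal{G}$ only where a conductor or product must be known to land back in $\mathcal{G}$ (note that the statements themselves are phrased for arbitrary $I,J\in\inssubmod_A(B)$, but $c$ is only defined on $\mathcal{G}$, so one should read each condition as quantified over those $I,J,b$ for which the relevant submodules lie in $\mathcal{G}$; the hypothesis guarantees this happens whenever $J\neq(0)$, $b\neq 0$, and the trivial cases $J=(0)$ or $b=0$ are immediate since $(0)^c=(0)$).

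The implication $(\ref{lemma:defproduct:prodJ})\Rightarrow(\ref{lemma:defproduct:prodb})$ is trivial: take $J=bA$, so $JI=bI$ and $JI^\star=bI^\star$. For $(\ref{lemma:defproduct:prodb})\Rightarrow(\ref{lemma:defproduct:colon})$, fix $I\in\mathcal{G}$ and $b\in B$ with $(I:b)\in\mathcal{G}$; I want $(I:b)^\star\subseteq(I^\star:b)$, i.e.\ $b(I:b)^\star\subseteq I^\star$. By $(\ref{lemma:defproduct:prodb})$ applied to the submodule $(I:b)$ we have $b(I:b)^\star\subseteq(b(I:b))^\star$, and since $b(I:b)\subseteq I$, monotonicity gives $(b(I:b))^\star\subseteq I^\star$; chaining these yields $b(I:b)^\star\subseteq I^\star$, which is exactly the claim.

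The implication $(\ref{lemma:defproduct:colon})\Rightarrow(\ref{lemma:defproduct:prodJ})$ is the step requiring the most care, and is where I expect the main (mild) obstacle: recovering a statement about \emph{products} from one about \emph{conductors}. The key algebraic identity is that for an element $b\in B$ that is not a zerodivisor on $B$ (so that multiplication by $b$ is injective), one has $((bJ):b) = J$ and, more usefully, $(I : b)\cdot b = I\cap bB$ — but here $B$ need not be a domain, so I must instead argue directly. Given $I,J\in\mathcal{G}$ with $J\neq(0)$, pick any nonzero $b\in J$; then $bA\subseteq J$, hence $I\subseteq(IJ:b)$... this is the wrong direction. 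The cleaner route: to show $JI^\star\subseteq(JI)^\star$ it suffices, since $JI^\star=\sum_{b\in J}bI^\star$, to show $bI^\star\subseteq(JI)^\star$ for each $b\in J$. Now observe $I^\star\subseteq((IJ):b)^{\,}$ would follow if $I\subseteq(IJ:b)$ and $(IJ:b)$ were $\star$-closed — but we don't know the latter. Instead apply $(\ref{lemma:defproduct:colon})$ to the module $IJ$ (which is in $\mathcal{G}$) and the element $b$: this gives $(IJ:b)^\star\subseteq((IJ)^\star:b)$. Since $b\in J$ we have $bI\subseteq IJ$, so $I\subseteq(IJ:b)$, whence $I^\star\subseteq(IJ:b)^\star\subseteq((IJ)^\star:b)$, i.e.\ $bI^\star\subseteq(IJ)^\star$. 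Summing over $b\in J$ gives $JI^\star\subseteq(IJ)^\star=(JI)^\star$, as desired. The only point to double-check is that $(IJ:b)\in\mathcal{G}$, which holds by hypothesis since $IJ\in\mathcal{G}$ (again by hypothesis) and $b\neq 0$; and the edge case $J=(0)$ gives $JI^\star=(0)=(JI)^\star$ directly.

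Thus the whole argument reduces to the containment $b(I:b)\subseteq I$ (trivial), to monotonicity and extensivity of $c$, and to the observation that $bI\subseteq IJ$ when $b\in J$, together with careful bookkeeping that every submodule to which $c$ is applied genuinely lies in $\mathcal{G}$; the standing hypotheses on $\mathcal{G}$ are tailored precisely so that this bookkeeping always succeeds.
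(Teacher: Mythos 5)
Your proof is correct, and takes a somewhat different route from the paper. The paper cites \cite[Chapter 3, Lemma 3.1]{knebush-II} for the equivalence $(\ref{lemma:defproduct:prodb})\Leftrightarrow(\ref{lemma:defproduct:prodJ})$ and then proves $(\ref{lemma:defproduct:prodb})\Leftrightarrow(\ref{lemma:defproduct:colon})$ directly by two short arguments: for $(\ref{lemma:defproduct:prodb})\Rightarrow(\ref{lemma:defproduct:colon})$ it applies $(\ref{lemma:defproduct:prodb})$ to the module $(I^\star:b)$ (rather than to $(I:b)$, as you do), deducing that $(I^\star:b)$ is $\star$-closed; and for $(\ref{lemma:defproduct:colon})\Rightarrow(\ref{lemma:defproduct:prodb})$ it uses $I\subseteq(bI:b)$, essentially the $J=bA$ case of your argument. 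You instead run the cycle $(\ref{lemma:defproduct:prodJ})\Rightarrow(\ref{lemma:defproduct:prodb})\Rightarrow(\ref{lemma:defproduct:colon})\Rightarrow(\ref{lemma:defproduct:prodJ})$, and your closing step — applying $(\ref{lemma:defproduct:colon})$ to $(IJ:b)$ for each $b\in J$ and summing — recovers the full implication $(\ref{lemma:defproduct:colon})\Rightarrow(\ref{lemma:defproduct:prodJ})$ without invoking the cited lemma. The payoff of your approach is self-containment; the paper's approach keeps the arguments shorter at the cost of an external reference. Your attention to where each closure is actually defined (i.e., which modules must lie in $\mathcal{G}$) is careful and matches the spirit of the standing hypothesis, including the clarifying remark at the outset that the quantifiers in the statement should really run over $\mathcal{G}$ rather than all of $\inssubmod_A(B)$.
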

\begin{proof}
The equivalence of \ref{lemma:defproduct:prodb} and \ref{lemma:defproduct:prodJ} is proved in \cite[Chapter 3, Lemma 3.1]{knebush-II}. Suppose \ref{lemma:defproduct:prodb} holds: then, using it on $(I^\star:b)$, we have 
\begin{equation*}
b(I^\star:b)^\star\subseteq(b(I^\star:b))^\star\subseteq (I^\star)^\star=I^\star,
\end{equation*}
and so \ref{lemma:defproduct:colon} holds. Conversely, if \ref{lemma:defproduct:colon} holds then
\begin{equation*}
I^\star\subseteq(bI:b)^\star\subseteq((bI)^\star:b)
\end{equation*}
and so $bI^\star\subseteq(bI)^\star$, i.e., \ref{lemma:defproduct:prodb} holds. The claim is proved.
\end{proof}

The hypothesis of the lemma hold, for example, if $\mathcal{G}=\inssubmod_A(B)$ or if $A$ is a domain, $B$ its quotient field and $\mathcal{G}$ the set of fractional ideals of $A$.

\begin{ex}\label{ex:general}
Definition \ref{def:main} is very general, and includes several already studied classes of closure operations.
\begin{enumerate}[(1)]
\item\label{ex:general:extension} If $\mathcal{G}=\inssubmod_A(B)$, then the notion of multiplicative operation on $(A,B,\mathcal{G})$ coincides with the notion of \emph{star operation on the extension $A\subseteq B$}, as defined in \cite[Chapter 3]{knebush-II}.
\item\label{ex:general:semistar} If $A$ is a domain and $B$ is its quotient field, then a multiplicative operation on $(A,B,\inssubmod_A(B)^\nz)$ is just a semistar operation on $A$. On the other hand, the multiplicative operations on $(A,B,\inssubmod_A(B))$ are the semistar operations and the closure sending every submodule to $B$.
\item\label{ex:general:semiprime} If $A=B$, then $\inssubmod_A(A)=\insid(A)$ is just the set of ideals of $A$. By Lemma \ref{lemma:defproduct}, the multiplicative operations on $(A,A,\insid(A))$ are exactly the semiprime operations on $A$.
\item\label{ex:general:star} If $A$ is a domain and $B$ is its quotient field, then a multiplicative operation on $(A,B,\insfracid(A)^\nz)$ is exactly a fractional star operation on $A$. To obtain star operations, we need to consider only integral ideals; while the proof is easy, it needs some care, and for this reason we give it explicitly in the next proposition.
\end{enumerate}
\end{ex}

\begin{prop}\label{prop:ex-star}
Let $D$ be an integral domain and let $K$ be its quotient field. Then, the map
\begin{equation*}
\begin{aligned}
\Psi\colon\insstar(D) & \longrightarrow\insmult(D,K,\insid(D)^\nz),\\
\star & \longmapsto \star|_{\insid(D)^\nz}
\end{aligned}
\end{equation*}
is an order isomorphism.
\end{prop}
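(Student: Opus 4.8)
The plan is to produce an explicit two-sided inverse of $\Psi$, sending a multiplicative operation $c$ on $(D,K,\insid(D)^\nz)$ to the unique fractional star operation that extends it to all of $\insfracid(D)$, and then to carry out the routine order-theoretic bookkeeping. Since $\Psi$ is visibly order-preserving (it is just restriction), the substance lies in four points: $\Psi$ is well-defined, it is injective, it is surjective, and its inverse is order-preserving. Before starting I would record two facts valid for every $c\in\insmult(D,K,\insid(D)^\nz)$. First, $D^c=D$: indeed $c$ is extensive, so $D\subseteq D^c$, while $D^c\in\insid(D)^\nz$ forces $D^c\subseteq D$. Second — and this is the point where, as the statement warns, some care is needed — if $b\in K\setminus\{0\}$ and both $I$ and $bI$ lie in $\insid(D)^\nz$, then $bI^c=(bI)^c$. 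One cannot invoke Lemma~\ref{lemma:defproduct} here, because $\insid(D)^\nz$ fails its hypotheses (a colon $(I:b)$ with $b\in K$ need not be an integral ideal); instead I would re-run the two halves of that lemma's argument using precisely the two instances of $(\dagger)$ whose colon ideal happens to stay integral: applying $(\dagger)$ to the pair $(bI,b)$, and using $(bI:b)=I\in\insid(D)^\nz$, gives $bI^c\subseteq(bI)^c$; applying $(\dagger)$ to the pair $(I,b^{-1})$, and using $(I:b^{-1})=bI\in\insid(D)^\nz$, gives $(bI)^c\subseteq bI^c$.

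Next I would check that $\Psi$ is well-defined. If $\star\in\insstar(D)$ and $I\in\insid(D)^\nz$, then $I\subseteq I^\star\subseteq D^\star=D$, so $\star$ maps $\insid(D)^\nz$ into itself and $\star|_{\insid(D)^\nz}$ is a closure operation on $\insid(D)^\nz$. For $(\dagger)$, let $I\in\insid(D)^\nz$ and $b\in K$ be such that $(I:b)\in\insid(D)^\nz$; necessarily $b\neq 0$ (otherwise $(I:b)=K$, which lies in $\insid(D)^\nz$ only in the trivial case $D=K$), so $(I:b)=b^{-1}I$ and the fractional star identity gives $b(b^{-1}I)^\star=(b\cdot b^{-1}I)^\star=I^\star$, i.e.\ $(I:b)^\star=b^{-1}I^\star=(I^\star:b)$. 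Hence $\star|_{\insid(D)^\nz}\in\insmult(D,K,\insid(D)^\nz)$, and injectivity will follow once a left inverse is produced.

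For the inverse, given $c\in\insmult(D,K,\insid(D)^\nz)$ I would define $\Phi(c)$ on $\insfracid(D)$ by $(0)^{\Phi(c)}:=(0)$ and, for a nonzero fractional ideal $J$, by choosing $d\in D\setminus\{0\}$ with $dJ\subseteq D$ and setting $J^{\Phi(c)}:=d^{-1}(dJ)^c$. Independence of the choice of $d$ is exactly the second preliminary fact, applied with $I=dJ$ and $b=d'/d$. One then verifies that $\Phi(c)$ takes values in $\insfracid(D)$, that the three closure axioms for $\Phi(c)$ reduce termwise to those of $c$ after clearing a common denominator, and that $xJ^{\Phi(c)}=(xJ)^{\Phi(c)}$ for all $x\in K$, which for $x,J\neq 0$ follows by picking a single $d$ with $dJ,dxJ\subseteq D$ and again invoking $bI^c=(bI)^c$ with $b=x$, $I=dJ$. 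Together with $D^{\Phi(c)}=D^c=D$, this shows $\Phi(c)\in\insstar(D)$. Finally $\Psi(\Phi(c))=c$ by taking $d=1$, and $\Phi(\Psi(\star))=\star$ since $d^{-1}(dJ)^\star=d^{-1}\cdot dJ^\star=J^\star$ by the fractional star identity (with the case $(0)^\star=(0)$ handled separately); thus $\Phi=\Psi^{-1}$. Since $\Phi$ is patently order-preserving ($c_1\leq c_2$ gives $(dJ)^{c_1}\subseteq(dJ)^{c_2}$ for every admissible $d$), $\Psi$ is an order isomorphism.

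I expect the only genuinely delicate point to be the preliminary identity $bI^c=(bI)^c$: its proof is short but must sidestep Lemma~\ref{lemma:defproduct}, relying on the coincidence that the two colon ideals $(bI:b)$ and $(I:b^{-1})$ remain integral even though colons by arbitrary elements of $K$ need not. Everything downstream — independence of the denominator, the closure axioms and the star identity for $\Phi(c)$, and the two round-trip computations — is then a mechanical verification.
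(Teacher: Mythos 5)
Your proof is correct and follows essentially the same route as the paper: the inverse is constructed as $J\mapsto d^{-1}(dJ)^c$ for any $d$ with $dJ\subseteq D$, and the $(\dagger)$-based computation you isolate as the ``scale-invariance'' lemma $bI^c=(bI)^c$ is exactly the computation the paper performs inline to show well-definedness of that extension. The only difference is organizational: you factor that identity out as a stand-alone fact (which also cleanly yields injectivity via a two-sided inverse), whereas the paper proves injectivity directly and repeats the $(\dagger)$-argument inside the surjectivity check.
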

\begin{proof}
The restriction of a star operation is clearly multiplicative, and thus $\Psi$ is well-defined and order-preserving. Furthermore, if $\star_1\neq\star_2$ then $I^{\star_1}\neq I^{\star_2}$ for some fractional ideal $I$; if $d\neq 0$ satisfies $dI\subseteq D$, then $(dI)^{\star_1}=dI^{\star_1}\neq dI^{\star_2}=(dI)^{\star_2}$, and thus $\Psi$ is also injective.

Suppose now that $\ast\in\insmult(D,K,\insid(D)^\nz)$. For every fractional ideal $I$ of $D$, let $d\neq 0$ be such that $dI\subseteq D$, and define $I^\star:=d^{-1}(dI)^\ast$. We claim that $\star$ is a star operation whose restriction is $\ast$.

We first show that $\star$ is well-defined: indeed, suppose $dI,eI\subseteq D$ for some $d,e\neq 0$. Then, $(dI:e^{-1}d)=eI\subseteq D$, and thus
\begin{equation*}
(eI)^\ast=(dI:e^{-1}d)^\ast\subseteq((dI)^\ast:e^{-1}d)=ed^{-1}(dI)^\ast
\end{equation*}
and so $e^{-1}(eI)^\ast\subseteq d^{-1}(dI)^\ast$. By symmetry, also the opposite containment holds, and thus $\ast$ is well-defined. The definition immediately implies that $\star$ is extensive; it is also order-preserving since if $I\subseteq J$ and $dJ\subseteq D$ then also $dI\subseteq D$. It is idempotent since, if $dI\subseteq D$, then $dI^\star=d(d^{-1}(dI)^\ast)=(dI)^\ast$ and so
\begin{equation*}
(I^\star)^\star=d^{-1}(dI)^\ast=d^{-1}dI^\star=I^\star.
\end{equation*}
Similarly, for every $e\in K$, if $dI\subseteq D$ then $e^{-1}deI\subseteq D$ and thus
\begin{equation*}
(eI)^\star=ed^{-1}(e^{-1}deI)^\ast=e(d^{-1}(dI)^\ast))=eI^\star.
\end{equation*}
Finally, using $d=1$ (and the fact that $D$ is the maximum of $\insid(D)^\nz$) we see that $D^\star=D$ and $I^\star=I^\ast$ for every $I\in\insid(D)^\nz$; therefore, $\ast=\Psi(\star)$ and $\Psi$ is surjective. Hence, it is an order isomorphism, as claimed.
\end{proof}

Like in the case of semistar operations, using $\insid(D)$ instead of $\insid(D)^\nz$ we obtain a single new multiplicative operation, namely the one sending every ideal to $A$.

\begin{oss}
~\begin{enumerate}[(1)]
\item As an $A$-algebra, $B$ is also an $A$-module. There are several kinds of closure operations that can be defined on the submodules of an $A$-module (see e.g. \cite[Section 7]{epstein_guide}); however, the notion of multiplicative operation cannot be reduced to any of them, since multiplication by elements of $B$ is an integral part of Definition \ref{def:main}. For example, if $A$ is a field and $\mathcal{G}=\inssubmod_A(B)$, the set of multiplicative operations on $(A,B,\mathcal{G})$ depends not only on the $A$-module structure of $B$ (i.e., on the dimension of $B$ over $A$) but also on its ring structure.
\item Even if all elements of $\mathcal{G}$ are contained in a subalgebra $B'\subseteq B$, the multiplicative operations on $(A,B,\mathcal{G})$ are not the same as the multiplicative operations on $(A,B',\mathcal{G})$. For example, if $A$ is a domain and $\mathcal{G}$ are the ideals of $A$, the the multiplicative operations on $(A,A,\mathcal{G})$ are the semiprime operations, while the multiplicative operations on $(A,K,\mathcal{G})$ (where $K$ is the quotient field of $A$) correspond, by Proposition \ref{prop:ex-star}, to star operations (plus the trivial map $I\mapsto A$).
\end{enumerate}
\end{oss}

While multiplicative operations usually act very similarly to semistar, star or semiprime operations, often additional care is needed in describing their properties, since the set $\mathcal{G}$ of definition may not be closed by sums or by intersections. We usually circumvent this problem by adding some hypothesis on $\mathcal{G}$.

\begin{prop}\label{prop:prop}
Let $\star$ be a multiplicative operation on $(A,B,\mathcal{G})$, and let $I\in\mathcal{G}$, $\{I_\lambda\}_{\lambda\in\Lambda}\subseteq\mathcal{G}$. Then, the following hold.
\begin{enumerate}[(a),itemsep=3pt]
\item\label{prop:prop:sum} If $\sum_\lambda I_\lambda,\sum_\lambda I_\lambda^\star\in\mathcal{G}$, then $\left(\sum_\lambda I_\lambda\right)^\star=\left(\sum_\lambda I_\lambda^\star\right)^\star$. In particular, this happens if $\mathcal{G}$ is quasi-upward closed.
\item\label{prop:prop:intersec} If $\bigcap_\lambda I_\lambda,\bigcap_\lambda I_\lambda^\star\in\mathcal{G}$, then $\left(\bigcap_\lambda I_\lambda\right)^\star=\left(\bigcap_\lambda I_\lambda^\star\right)^\star$. In particular, this happens if $\mathcal{G}$ is quasi-downward closed.
\item\label{prop:prop:colon} If $I$ is $\star$-closed and $(I:b)\in\mathcal{G}$, then $(I:b)$ is $\star$-closed.
\item\label{prop:prop:Istar} $I^\star$ is equal to the intersection of all the $\star$-closed modules containing $I$.
\end{enumerate}
\end{prop}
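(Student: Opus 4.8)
The plan is to derive everything from the three closure axioms together with $(\dagger)$, keeping the conductor in play only where it is genuinely needed; on this reading (d), (c), and (a) are essentially immediate, and the one substantive point is an inclusion hidden inside (b).

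For (d): if $E\in\mathcal{G}^\star$ and $I\subseteq E$, then monotonicity and $E=E^\star$ give $I^\star\subseteq E$, so $I^\star$ is contained in every $\star$-closed module containing $I$; since $I^\star$ is itself such a module, it equals their intersection. For (c): since $I=I^\star$, condition $(\dagger)$ applied to $I$ and $b$ gives $(I:b)^\star\subseteq(I^\star:b)=(I:b)$, and extensivity supplies the reverse inclusion. For (a): $I_\mu\subseteq\sum_\lambda I_\lambda$ yields $I_\mu^\star\subseteq(\sum_\lambda I_\lambda)^\star$ for every $\mu$, hence $\sum_\lambda I_\lambda^\star\subseteq(\sum_\lambda I_\lambda)^\star$ (this is where $\sum_\lambda I_\lambda^\star\in\mathcal{G}$ is used), and applying $\star$ together with idempotence gives $(\sum_\lambda I_\lambda^\star)^\star\subseteq(\sum_\lambda I_\lambda)^\star$; the opposite inclusion follows from $\sum_\lambda I_\lambda\subseteq\sum_\lambda I_\lambda^\star$. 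The two ``in particular'' clauses are just bookkeeping about intervals: if $\mathcal{G}$ is quasi-upward closed with maximum $M$ then $M=M^\star$ and, for a fixed $\lambda_0$, the inclusions $I_{\lambda_0}\subseteq\sum_\lambda I_\lambda\subseteq M$ and $I_{\lambda_0}^\star\subseteq\sum_\lambda I_\lambda^\star\subseteq M$ place both sums inside $\mathcal{G}$; dually, a minimum $m$ gives $m\subseteq\bigcap_\lambda I_\lambda\subseteq I_{\lambda_0}$ and $m\subseteq\bigcap_\lambda I_\lambda^\star\subseteq I_{\lambda_0}^\star$, so both intersections lie in $\mathcal{G}$.

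For (b) the inclusion $(\bigcap_\lambda I_\lambda)^\star\subseteq(\bigcap_\lambda I_\lambda^\star)^\star$ is formal, from $\bigcap_\lambda I_\lambda\subseteq\bigcap_\lambda I_\lambda^\star$ and monotonicity. Moreover each $I_\lambda^\star$ is $\star$-closed, and an intersection $J$ of $\star$-closed modules that happens to lie in $\mathcal{G}$ is again $\star$-closed, since $J^\star\subseteq(I_\mu^\star)^\star=I_\mu^\star$ for every $\mu$ forces $J^\star\subseteq J$; hence $(\bigcap_\lambda I_\lambda^\star)^\star=\bigcap_\lambda I_\lambda^\star$, and the asserted equality reduces to $\bigcap_\lambda I_\lambda^\star\subseteq(\bigcap_\lambda I_\lambda)^\star$. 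This last inclusion is the step I expect to be the main obstacle: for a closure operation with no multiplicative hypothesis it can fail, so $(\dagger)$ must be used here in an essential way. The natural route is through (c) and (d): given a $\star$-closed $E$ with $\bigcap_\lambda I_\lambda\subseteq E$ and an element $z\in\bigcap_\lambda I_\lambda^\star$, one wants $z\in E$, and the plan would be to pass to the conductors $(I_\lambda:z)$ and $(E:z)$ and exploit that (c) turns conductors of $\star$-closed modules into $\star$-closed modules, so as to carry the membership $z\in I_\lambda^\star$ — which holds for each individual $\lambda$ — past the intersection. I would attack this reduction first, as the remaining items are immediate.
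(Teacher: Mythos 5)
Your arguments for (a), (c), (d) and both ``in particular'' clauses are correct, and this matches the paper, whose entire stated proof is ``apply the definitions.'' Your reduction of (b) is also right as far as it goes: an intersection of $\star$-closed modules that lies in $\mathcal{G}$ is $\star$-closed (monotonicity pushes $J^\star$ into each $I_\mu^\star$), so $(\bigcap_\lambda I_\lambda^\star)^\star=\bigcap_\lambda I_\lambda^\star$ and the claimed equality in (b) is equivalent to $\bigcap_\lambda I_\lambda^\star\subseteq(\bigcap_\lambda I_\lambda)^\star$.

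The obstacle you flagged there is fatal, not merely technical, and the plan you sketch cannot succeed. For finite $\Lambda$ the reduced inclusion is exactly the definition of $\star$ being \emph{stable}, and stability is not automatic for multiplicative operations; $(\dagger)$ gives no help, since it even holds with equality for every fractional star operation while stability still fails. Concretely, take $D=K[[t^3,t^4,t^5]]=K+t^3K[[t]]$, with normalization $T=K[[t]]$ and $\mathfrak{m}=(D:T)=t^3T$, and let $\star=v$ (divisorial closure), $I_1=D+tD$, $I_2=D+t^2D$. Then $I_1\cap I_2=D$, so $(I_1\cap I_2)^v=D$; but $(D:I_1)=(D:I_2)=\mathfrak{m}$, hence $I_1^v=I_2^v=(D:\mathfrak{m})=T$ and $(I_1^v\cap I_2^v)^v=T^v=T\neq D$. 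All modules involved are nonzero fractional ideals, so the hypotheses of (b) are satisfied and the asserted equality fails. In other words, item (b) as stated is incorrect; what is actually true, and what the paper in fact uses later (e.g.\ in Example~\ref{ex:dimV1} and inside the proof of Lemma~\ref{lemma:div}), is only the part you established along the way, namely that $\bigcap_\lambda I_\lambda^\star$ is $\star$-closed when it lies in $\mathcal{G}$, from which one gets the one-sided inclusion $(\bigcap_\lambda I_\lambda)^\star\subseteq\bigcap_\lambda I_\lambda^\star$ but not equality. So you were right to isolate that inclusion as the crux; the conclusion, though, is that no route through (c), (d) and conductors exists, because the target statement is false.
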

\begin{proof}
It is enough to apply the definitions.
\end{proof}

Point \ref{prop:prop:Istar} of the previous proposition has a significant consequence.
\begin{cor}\label{cor:ugclosed}
Let $\star_1,\star_2\in\insmult(A,B,\mathcal{G})$. If $\mathcal{G}^{\star_1}=\mathcal{G}^{\star_2}$, then $\star_1=\star_2$.
\end{cor}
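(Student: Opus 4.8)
The plan is to obtain the statement as a formal consequence of Proposition \ref{prop:prop}\ref{prop:prop:Istar}, which already contains all the substance: for any multiplicative operation $\star$ on $(A,B,\mathcal{G})$ and any $I\in\mathcal{G}$, the closure $I^\star$ equals the intersection of all the $\star$-closed elements of $\mathcal{G}$ that contain $I$. The key observation is that this description of $I^\star$ depends only on the order and on the set $\mathcal{G}^\star$ of $\star$-closed elements, not otherwise on the map $\star$; so two multiplicative operations with the same closed elements must agree.

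Concretely, I would argue as follows. Fix $I\in\mathcal{G}$. Applying Proposition \ref{prop:prop}\ref{prop:prop:Istar} to $\star_1$ and then to $\star_2$ gives
\begin{equation*}
I^{\star_1}=\bigcap\{J\in\mathcal{G}^{\star_1}:I\subseteq J\}\qquad\text{and}\qquad I^{\star_2}=\bigcap\{J\in\mathcal{G}^{\star_2}:I\subseteq J\}.
\end{equation*}
By hypothesis $\mathcal{G}^{\star_1}=\mathcal{G}^{\star_2}$, so the two collections of submodules being intersected on the right-hand sides are literally the same; hence $I^{\star_1}=I^{\star_2}$. Since $I\in\mathcal{G}$ was arbitrary, this says exactly that $\star_1=\star_2$.

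The only point that I would make sure is in order — and it is essentially already in Proposition \ref{prop:prop} — is that the displayed intersections genuinely make sense and really compute $I^\star$: the family $\{J\in\mathcal{G}^\star:I\subseteq J\}$ is nonempty because it contains $I^\star$, and for any such $J$ one has $I^\star=(I)^\star\subseteq J^\star=J$ by order-preservation and idempotence, so $I^\star$ is the least element of that family and equals its intersection, which therefore lies in $\mathcal{G}$. Granting this, there is no real obstacle left: the corollary is just the standard fact that a closure operation is determined by its set of closed objects, transported to the present setting via Proposition \ref{prop:prop}\ref{prop:prop:Istar}.
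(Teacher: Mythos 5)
Your proof is correct and takes exactly the same route as the paper: both invoke Proposition \ref{prop:prop}\ref{prop:prop:Istar} to express $I^\star$ as the intersection of the $\star$-closed elements containing $I$, and then observe that this intersection depends only on $\mathcal{G}^\star$. Your added sanity check that the family is nonempty and that the intersection really equals $I^\star$ is a pleasant elaboration but adds nothing beyond the proposition already cited.
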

\begin{proof}
By Proposition \ref{prop:prop}\ref{prop:prop:Istar}, for every $I\in\mathcal{G}$ the closure $I^\star$ can be defined uniquely through $\mathcal{G}^\star$. Hence, if $\star_1$ and $\star_2$ close exactly the same modules then they must be equal.
\end{proof}

\medskip

Two common properties of closure operations in the algebraic setting are being of finite type and being stable. These definitions and the related constructions works only partially for multiplicative operations.

\begin{prop}
Let $A\subseteq B$ be a ring extension and let $\mathcal{G}\subseteq\inssubmod_A(B)$ be downward closed; let $\star\in\insmult(A,B,\mathcal{G})$. The map
\begin{equation*}
\star_f:I\mapsto\bigcup\{J^\star\mid J\subseteq I\text{~is finitely generated}\}
\end{equation*}
is a well-defined multiplicative operation on $(A,B,\mathcal{G})$, and $\star_f=(\star_f)_f$.
\end{prop}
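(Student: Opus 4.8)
The plan is to verify directly that $\star_f$ is a closure operation satisfying $(\dagger)$, and then to observe that $(\star_f)_f=\star_f$ is essentially formal. First I would set up the bookkeeping that makes everything go through. For $I\in\mathcal{G}$, every finitely generated submodule $J\subseteq I$ lies in $\mathcal{G}$ by downward closure, so $J^\star$ is defined; the family $\{J^\star\mid J\subseteq I\text{ finitely generated}\}$ is directed, since it is stable under $(J_1,J_2)\mapsto J_1+J_2$ by order-preservation of $\star$, so its union $\star_f(I)$ is an $A$-submodule of $B$. Moreover $I\subseteq\star_f(I)\subseteq I^\star$: the left inclusion because each $x\in I$ lies in $(Ax)^\star$, the right because $\star$ is order-preserving. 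Since $\mathcal{G}$ is downward closed and $I^\star\in\mathcal{G}$, this sandwich forces $\star_f(I)\in\mathcal{G}$, giving well-definedness and extensivity at once. Order-preservation is immediate (a finitely generated submodule of $I$ is a finitely generated submodule of any $I'\supseteq I$). For idempotency I would use directedness again: a finitely generated $J\subseteq\star_f(I)$ has its finitely many generators inside a single $K^\star$ with $K\subseteq I$ finitely generated, so $J\subseteq K^\star$ and $J^\star\subseteq(K^\star)^\star=K^\star\subseteq\star_f(I)$; taking the union over such $J$ gives $\star_f(\star_f(I))\subseteq\star_f(I)$, the reverse inclusion being extensivity.

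The main obstacle is the verification of $(\dagger)$ for $\star_f$, because $\star$ controls only finitely generated data, and one must check that the auxiliary conductor modules appearing in the reduction stay inside $\mathcal{G}$. The approach is as follows. Fix $I\in\mathcal{G}$ and $b\in B$ with $(I:b)\in\mathcal{G}$, and take $x\in(I:b)^{\star_f}$, so $x\in J^\star$ for some finitely generated $J\subseteq(I:b)$. Then $bJ$ is finitely generated and $bJ\subseteq b(I:b)\subseteq I$, so $bJ\in\mathcal{G}$, and $(bJ:b)\subseteq(I:b)$, so $(bJ:b)\in\mathcal{G}$ as well. Now $J\subseteq(bJ:b)$ together with order-preservation and $(\dagger)$ for $\star$ (applied to $bJ$ and $b$) gives
\[
J^\star\subseteq(bJ:b)^\star\subseteq\bigl((bJ)^\star:b\bigr),
\]
i.e., $bJ^\star\subseteq(bJ)^\star$. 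Hence $bx\in(bJ)^\star$, and since $bJ$ is one of the finitely generated submodules of $I$ in the union defining $\star_f(I)$, we get $bx\in I^{\star_f}$, that is, $x\in(I^{\star_f}:b)$. This establishes $(\dagger)$ for $\star_f$, so $\star_f\in\insmult(A,B,\mathcal{G})$.

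Finally, for $(\star_f)_f=\star_f$ I would first observe that $J^{\star_f}=J^\star$ whenever $J\in\mathcal{G}$ is itself finitely generated: the inclusion $\subseteq$ holds because $\star$ is order-preserving on the pieces of $\star_f(J)$, and $\supseteq$ holds because $J$ is one of those pieces. Substituting this into the definition of the finite-type construction yields, for every $I\in\mathcal{G}$,
\[
(\star_f)_f(I)=\bigcup\{J^{\star_f}\mid J\subseteq I\text{ f.g.}\}=\bigcup\{J^\star\mid J\subseteq I\text{ f.g.}\}=\star_f(I),
\]
so $\star_f=(\star_f)_f$.
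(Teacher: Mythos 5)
Your proof is correct and follows essentially the same route as the paper's: the key multiplicativity computation is the identical chain $J^\star\subseteq(bJ:b)^\star\subseteq((bJ)^\star:b)$ with $bJ$ finitely generated inside $I$, and the well-definedness of $\star_f$ via the sandwich $I\subseteq\star_f(I)\subseteq I^\star$ plus downward closure of $\mathcal{G}$ is exactly the paper's argument. The extra care you take in verifying that $bJ$ and $(bJ:b)$ land in $\mathcal{G}$, and in writing out the directedness argument for idempotency and the computation $J^{\star_f}=J^\star$ for finitely generated $J$, simply makes explicit the steps the paper compresses with ``follows exactly as in the star or in the semiprime case.''
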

We say that $\star$ is \emph{of finite type} if $\star=\star_f$.
\begin{proof}
Since $\mathcal{G}$ is downward closed, $J^\star$ is defined for every finitely generated $A$-module $J\subseteq I$; furthermore, the union is in $\mathcal{G}$ because it is contained in $I^\star$. Hence, $\star_f$ is well-defined as a map from $\mathcal{G}$ to $\mathcal{G}$. The fact that $\star_f$ is a closure operation follows exactly as in the star or in the semiprime case.

Let $I\in\mathcal{G}$ and $b\in B$. Suppose $x\in(I:b)^{\star_f}$: then, there is a finitely generated $J\subseteq(I:b)$ such that $x\in J^\star$. Hence,
\begin{equation*}
x\in J^\star\subseteq(bJ:b)^\star\subseteq((bJ)^\star:b)\subseteq(I^{\star_f}:b)
\end{equation*}
since $bJ\subseteq I$ is finitely generated and thus $(bJ)^\star=(bJ)^{\star_f}\subseteq I^{\star_f}$. Hence, $\star_f$ is multiplicative; the fact that $\star_f=(\star_f)_f$ follows again as in the star or in the semiprime setting.
\end{proof}

We say that a multiplicative operation $\star$ on $(A,B,\mathcal{G})$ is \emph{stable} if $(I\cap J)^\star=I^\star\cap J^\star$ for every $I,J\in\mathcal{G}$.
\begin{prop}
Let $A\subseteq B$ be a ring extension and let $\mathcal{G}\subseteq\inssubmod_A(B)$ be downward closed; suppose $A\in\mathcal{G}$.
\begin{enumerate}[(a)]
\item The map
\begin{equation*}
\overline{\star}:I\mapsto\bigcup\{(I:E)\mid E^\star=A^\star\}
\end{equation*}
is a well-defined stable multiplicative operation on $(A,B,\mathcal{G})$. Furthermore, $\overline{(\overline{\star})}=\overline{\star}$, and $\star$ is stable if and only if $\star=\overline{\star}$.
\item The map
\begin{equation*}
\star_w:I\mapsto\bigcup\{(I:E)\mid E^\star=A^\star,~E\text{~is finitely generated}\}
\end{equation*}
is a well-defined stable multiplicative operation of finite type on $(A,B,\mathcal{G})$. Furthermore, $\star_w=(\star_w)_w$.
\end{enumerate}
\end{prop}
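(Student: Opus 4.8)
The plan is to verify in turn the three assertions of (a) and then their finite-type analogues in (b), following the classical constructions of the stable (and the stable finite-type) semistar operation, but keeping careful track of the fact that $\mathcal{G}$ need not be closed under sums, intersections, or colons. Throughout I would set $\mathcal{E}:=\{E\in\mathcal{G}\mid E^\star=A^\star\}$ and record the elementary facts that $A^\star\in\mathcal{G}$ with $A^\star=(A^\star)^\star$ (since $\star$ is idempotent with image in $\mathcal{G}$), that $A\in\mathcal{E}$, so that $(I:A)=I\subseteq\overline{\star}(I)$ for every $I\in\mathcal{G}$, and that every $A$-submodule of $A^\star$ lies in $\mathcal{G}$ by downward-closedness and has $\star$-closure contained in $A^\star$. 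The one point I expect to be a genuine obstacle is the assertion that $\overline{\star}(I)$ is again a member of $\mathcal{G}$; everything else is bookkeeping.

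To handle this I would first isolate two lemmas, which are exactly the places where condition $(\dagger)$ is used: (i) $\mathcal{E}$ is closed under finite intersections, and (ii) if $E\in\mathcal{E}$ and $bE\subseteq I$ for some $I\in\mathcal{G}$, $b\in B$, then $b\in I^\star$ (equivalently, $\overline{\star}(I)\subseteq I^\star$). For (ii), from $bE\subseteq I\subseteq I^\star$ one gets $E\subseteq(I^\star:b)$; applying $(\dagger)$ to the $\star$-closed module $I^\star$ (equivalently, Proposition \ref{prop:prop}\ref{prop:prop:colon}) shows that the relevant colon is $\star$-closed, hence by Proposition \ref{prop:prop}\ref{prop:prop:Istar} it contains $E^\star=A^\star$, and in particular $1$, so $b\in I^\star$; the subtlety, and the point at which downward-closedness of $\mathcal{G}$ together with $A^\star\in\mathcal{G}$ is needed, is to ensure that the colon modules occurring lie in $\mathcal{G}$ so that $(\dagger)$ applies. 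Fact (i) is proved in the same spirit: a $\star$-closed module containing $E_1$ contains $E_1^\star=A^\star$, whence $(E_1\cap E_2)^\star\supseteq A^\star$, the reverse inclusion being monotonicity. Granting (i) and (ii), $\overline{\star}(I)$ is an $A$-submodule of $B$ (closure under $A$-multiplication is immediate, and if $b_i\in(I:E_i)$ with $E_i\in\mathcal{E}$ then $(b_1+b_2)(E_1\cap E_2)\subseteq I$ with $E_1\cap E_2\in\mathcal{E}$ by (i)), and by (ii) it is squeezed, $I\subseteq\overline{\star}(I)\subseteq I^\star$; being a submodule contained in $I^\star\in\mathcal{G}$ and $\mathcal{G}$ being downward closed, $\overline{\star}(I)\in\mathcal{G}$.

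Once $\overline{\star}\colon\mathcal{G}\to\mathcal{G}$ is a well-defined map, the rest is routine. Extensivity is the inclusion $I\subseteq\overline{\star}(I)$ just noted; monotonicity follows from $I\subseteq J\Rightarrow(I:E)\subseteq(J:E)$; idempotence follows by taking $b$ with $bE\subseteq\overline{\star}(I)$ for some $E\in\mathcal{E}$, pulling each $be$ ($e\in E$) back through some $F_e\in\mathcal{E}$ with $beF_e\subseteq I$, and using (i) to manufacture a single $E'\in\mathcal{E}$ with $bE'\subseteq I$; and $(\dagger)$ is checked directly, since $x\in(I:b)^{\overline{\star}}$ yields $E\in\mathcal{E}$ with $xE\subseteq(I:b)$, hence $bxE\subseteq I$ and $bx\in\overline{\star}(I)$, i.e.\ $x\in(\overline{\star}(I):b)$. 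Stability $(I\cap J)^{\overline{\star}}=I^{\overline{\star}}\cap J^{\overline{\star}}$ is obtained the same way as the submodule property: $\subseteq$ is monotonicity, and for $\supseteq$ one combines witnesses $E,F\in\mathcal{E}$ for $b\in I^{\overline{\star}}\cap J^{\overline{\star}}$ into $E\cap F\in\mathcal{E}$ via (i). The equivalence ``$\star$ stable $\iff\star=\overline{\star}$'' then follows because $\overline{\star}\le\star$ always by (ii), while a stable $\star$ satisfies $I^\star\subseteq\overline{\star}(I)$ by using the module $(I:b)$ as a witness (stability forcing its $\star$-closure to be $A^\star$); and $\overline{(\overline{\star})}=\overline{\star}$ is then the special case of this equivalence applied to the operation $\overline{\star}$, which has just been shown to be stable.

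Part (b) is the finite-type variant: $\star_w$ is the same construction with $\mathcal{E}$ replaced by its subset $\mathcal{E}_{\mathrm{f}}$ of finitely generated members. Since $\mathcal{G}$ is downward closed, finitely generated $A$-submodules of members of $\mathcal{G}$ are again in $\mathcal{G}$, and $\mathcal{E}_{\mathrm{f}}$ remains stable under the operations needed to rerun the arguments of (a), so $\star_w$ is a well-defined stable multiplicative operation by the proof of (a). That $\star_w$ is of finite type and that $\star_w=(\star_w)_w$ is then deduced exactly as $\star_f=(\star_f)_f$ was in the preceding proposition, and the stability of $\star_w$ is (a) applied to the finitely generated witnesses. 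The crux of the whole argument remains the well-definedness in (a), i.e.\ facts (i) and (ii) and the handling of colon modules that may fail to lie in $\mathcal{G}$.
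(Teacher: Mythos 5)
Your approach is essentially the same as the paper's: show first that $(I:E)\subseteq I^\star$ whenever $E^\star=A^\star$ (your lemma~(ii) is exactly the paper's computation $1\in A^\star=E^\star\subseteq(I:x)^\star\subseteq(I^\star:x)$), deduce $\overline{\star}(I)\in\mathcal{G}$ by downward-closedness, check $(\dagger)$ directly, and defer the remaining closure-operation axioms to the classical star-operation arguments. However, your proof of lemma~(i) — that $\mathcal{E}:=\{E\in\mathcal{G}\mid E^\star=A^\star\}$ is closed under finite intersections — is a non sequitur. You write that ``a $\star$-closed module containing $E_1$ contains $E_1^\star=A^\star$, whence $(E_1\cap E_2)^\star\supseteq A^\star$,'' but a $\star$-closed module containing $E_1\cap E_2$ need not contain $E_1$ or $E_2$, so Proposition~\ref{prop:prop}\ref{prop:prop:Istar} gives you nothing about $(E_1\cap E_2)^\star$ from the behaviour of $E_1^\star$ alone; indeed, if that implication were valid for an arbitrary closure operation one could prove every $\star$ stable. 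In the classical star/semistar setting this step is proved via products ($E_1E_2\subseteq E_1\cap E_2$ and $(E_1E_2)^\star\supseteq D^\star$), which uses the product form of the defining condition; under $(\dagger)$ alone, and with $\mathcal{G}$ only downward closed, the product argument does not automatically transfer (cf.\ Lemma~\ref{lemma:defproduct}, which needs $\mathcal{G}$ closed under products). This lemma is exactly where real algebraic input is required to make $\overline{\star}(I)$ an $A$-submodule (and hence an element of~$\mathcal{G}$), and also where your idempotence argument leans on it — additionally, for part~(a) your idempotence sketch takes intersections of a family $\{F_e\}_{e\in E}$ that can be infinite, whereas lemma~(i) only handles finite intersections. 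You correctly flag the separate subtlety that colon modules such as $(I^\star:b)$ need not lie in $\mathcal{G}$ (a point the paper glosses over as well), but you don't resolve it; that is a shared looseness rather than a difference in method.
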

\begin{proof}
Since $A\in\mathcal{G}$, it makes sense to consider the modules $E$ such that $E^\star=A^\star$. If $x\in(I:E)$ and $E^\star=A^\star$, then $E\subseteq(I:x)$ and thus
\begin{equation*}
1\in A^\star=E^\star\subseteq(I:x)^\star\subseteq(I^\star:x),
\end{equation*}
so $x\in I^\star$, i.e., $(I:E)\subseteq I^\star$. Hence, $I^{\overline{\star}}\subseteq I^\star$ and since $\mathcal{G}$ is downward closed $I^{\overline{\star}}\in\mathcal{G}$. The fact that $\overline{\star}$ is a closure operations follows as in the star operation setting (see e.g. \cite{anderson_two_2000}).

Let $I\in\mathcal{G}$ and $b\in B$. If $x\in(I:b)^{\overline{\star}}$, then there is an $E$ such that $E^\star=A^\star$ and such that $x\in((I:b):E)$. Thus, $xbE\subseteq I$ and $xb\in(I:E)\subseteq I^{\overline{\star}}$. Hence, $x\in(I^{\overline{\star}}:b)$ and $(I:b)^{\overline{\star}}\subseteq(I^{\overline{\star}}:b)$, i.e., $\overline{\star}$ is multiplicative.

All the other claims follow as in the star operation setting (see again \cite{anderson_two_2000}). The case for $\star_w$ is completely analogous (adding ``finitely generated'' when needed).
\end{proof}

\medskip

The set $\insmult(A,B,\mathcal{G})$ inherits from $\insclos(\mathcal{G})$ a natural order structure. Unlike $\insstar(D)$ or $\inssemistar(D)$, in general this order does not satisfy any significant property: for example, it need not to admit infima or suprema, even of finite subsets. Adding hypothesis on $\mathcal{G}$ helps.
\begin{prop}\label{prop:infsup}
Let $A\subseteq B$ be a ring extension and let $\mathcal{G}\subseteq\inssubmod_A(B)$ be an interval. Let $\Lambda\subseteq\insmult(A,B,\mathcal{G})$ be nonempty. Then, the following hold.
\begin{enumerate}[(a)]
\item\label{prop:infsup:inf} $\Lambda$ has an infimum $\inf\Lambda$, and
\begin{equation*}
I^{\inf\Lambda}=\bigcap_{\star\in\Lambda}I^\star
\end{equation*}
for every $I\in\mathcal{G}$.
\item\label{prop:infsup:sup} Suppose that every $L\in\mathcal{G}$ is contained in a submodule that is $\star$-closed for every $\star\in\Lambda$. Then, $\Lambda$ has a supremum $\sup\Lambda$, and
\begin{equation*}
I^{\sup\Lambda}=\bigcap\{J\in\mathcal{G}\mid I\subseteq J=J^\star\text{~for all~}\star\in\Lambda\}
\end{equation*}
for every $I\in\mathcal{G}$. In particular, if $\mathcal{G}$ is quasi-upward closed then $\sup\Lambda$ exists for every $\Lambda\subseteq\insmult(A,B,\mathcal{G})$.
\end{enumerate}
\end{prop}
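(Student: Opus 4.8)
The plan is to exhibit, in each part, an explicit candidate and to check that it is a well-defined closure operation satisfying $(\dagger)$ with the stated extremal property; the hypothesis that $\mathcal{G}$ is an interval enters precisely to keep the submodules built along the way inside $\mathcal{G}$.

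For \ref{prop:infsup:inf} I would set $c(I):=\bigcap_{\star\in\Lambda}I^\star$. Fixing some $\star_0\in\Lambda$ (so that $\Lambda\neq\emptyset$ is used) gives $I\subseteq c(I)\subseteq I^{\star_0}$ with $I,I^{\star_0}\in\mathcal{G}$, whence $c(I)\in\mathcal{G}$ since $\mathcal{G}$ is an interval. The map $c$ is visibly extensive and order-preserving, and idempotent because $c(I)\subseteq I^\star$ forces $c(I)^\star\subseteq I^\star$ for every $\star$, so $c(c(I))\subseteq c(I)$. When $(I:b)\in\mathcal{G}$,
\begin{equation*}
(I:b)^c=\bigcap_{\star}(I:b)^\star\subseteq\bigcap_\star(I^\star:b)=\Bigl(\bigcap_\star I^\star:b\Bigr)=(I^c:b),
\end{equation*}
by $(\dagger)$ for each $\star\in\Lambda$ together with the identity $\bigcap_\lambda(N_\lambda:b)=(\bigcap_\lambda N_\lambda:b)$, so $c$ is multiplicative. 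As $c\leq\star$ for all $\star\in\Lambda$ by construction, while any multiplicative operation below all of them is pointwise contained in $c$, we conclude $c=\inf\Lambda$.

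For \ref{prop:infsup:sup} put $\mathcal{H}:=\{J\in\mathcal{G}\mid J=J^\star\text{ for all }\star\in\Lambda\}$ and define $c(I):=\bigcap\{J\in\mathcal{H}\mid I\subseteq J\}$. The hypothesis makes this an intersection over a nonempty family, so $I\subseteq c(I)\subseteq J_0$ for some $J_0\in\mathcal{H}\subseteq\mathcal{G}$, and $c(I)\in\mathcal{G}$ by the interval property; since an intersection of $\star$-closed modules lying in $\mathcal{G}$ is again $\star$-closed (Proposition \ref{prop:prop}\ref{prop:prop:intersec}), $c$ is a closure operation with $\mathcal{G}^c=\mathcal{H}$. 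Once $c$ is known to be multiplicative, its extremal property is formal: $\mathcal{G}^c=\mathcal{H}\subseteq\mathcal{G}^\star$ for each $\star\in\Lambda$ gives $\star\leq c$ by Proposition \ref{prop:prop}\ref{prop:prop:Istar}, and if $c'\in\insmult(A,B,\mathcal{G})$ satisfies $\star\leq c'$ for all $\star\in\Lambda$ then $\mathcal{G}^{c'}\subseteq\bigcap_\star\mathcal{G}^\star=\mathcal{H}=\mathcal{G}^c$, so $c\leq c'$ by the same proposition; hence $c=\sup\Lambda$ and the displayed formula holds. Finally, if $\mathcal{G}$ is quasi-upward closed its maximum is fixed by every closure operation, so it lies in $\mathcal{H}$ and contains every member of $\mathcal{G}$, and the hypothesis of \ref{prop:infsup:sup} is automatic.

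The step I expect to be the main obstacle is checking that the $c$ of \ref{prop:infsup:sup} satisfies $(\dagger)$. Given $I\in\mathcal{G}$ and $b\in B$ with $(I:b)\in\mathcal{G}$, from $b(I:b)\subseteq I\subseteq I^c$ one has $(I:b)\subseteq(I^c:b)$, so it is enough to produce a member of $\mathcal{H}$ lying between $(I:b)$ and $(I^c:b)$: such a module contains $(I:b)$, hence contains $(I:b)^c$, forcing $(I:b)^c\subseteq(I^c:b)$. Picking $M\in\mathcal{H}$ with $(I:b)\subseteq M$ (via the hypothesis), the module $M\cap(I^c:b)$ is squeezed between $(I:b)$ and $M$ and so belongs to $\mathcal{G}$; one then shows it is $\star$-closed for every $\star\in\Lambda$, using that $I^c$ is $\star$-closed and Proposition \ref{prop:prop}\ref{prop:prop:intersec} and \ref{prop:prop}\ref{prop:prop:colon}. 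The delicacy is that $(I^c:b)$ itself need not lie in $\mathcal{G}$, so it must first be cut down by intersecting with members of $\mathcal{H}$ before the colon and intersection statements of Proposition \ref{prop:prop} can be invoked, and it is exactly the interval hypothesis that makes those intermediate modules legitimate elements of $\mathcal{G}$.
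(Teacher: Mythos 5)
Part (a) of your proposal is correct and follows the paper's argument essentially step for step: the candidate $I\mapsto\bigcap_{\star\in\Lambda}I^\star$ lands in $\mathcal{G}$ because it is trapped between $I$ and any single $I^{\star_0}$, and $(\dagger)$ passes to the intersection via the identity $\bigcap_\lambda(N_\lambda:b)=(\bigcap_\lambda N_\lambda:b)$.

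In part (b) you choose the same candidate closure as the paper, and you correctly put your finger on the delicate point: $(I^c:b)$ need not lie in $\mathcal{G}$ when $\mathcal{G}$ is merely an interval (the paper itself simply asserts that $(I^\sharp:b)$ is $\star$-closed, implicitly assuming $(I^\sharp:b)\in\mathcal{G}$). However, your proposed repair does not actually close this gap. Writing $N:=M\cap(I^c:b)$ with $M\in\mathcal{H}$ containing $(I:b)$, you know $N\in\mathcal{G}$ by the interval property, and $N\subseteq M=M^\star$ gives $N^\star\subseteq M$; but the remaining inclusion $N^\star\subseteq(I^c:b)$, that is $bN^\star\subseteq I^c$, is exactly the content of the claim, and neither Proposition \ref{prop:prop}\ref{prop:prop:colon} nor \ref{prop:prop}\ref{prop:prop:intersec} provides it: \ref{prop:prop:colon} would require $(I^c:b)\in\mathcal{G}$, which is precisely what you set out to avoid, and \ref{prop:prop:intersec} concerns intersections of modules already known to lie in $\mathcal{G}$ and to be $\star$-closed, so it cannot be applied to the factor $(I^c:b)$. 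One naturally tries to realize $N$ as $(L:b)\cap M$ for some $\star$-closed $L\subseteq I^c$ in $\mathcal{G}$ (for instance $L=I+bN$) and invoke $(\dagger)$, but this again needs $(L:b)\in\mathcal{G}$, and the interval hypothesis only gives a lower bound for $(L:b)$, not an upper one. So the patch relocates the difficulty rather than removing it. To make the argument watertight you either need a stronger hypothesis on $\mathcal{G}$ (upward closed, as in the proposition of Section \ref{sect:Gprinc} on downsets, where $(L:b)\in\mathcal{G}$ is automatic; or the fractional-ideal setting where $(I:b)=b^{-1}I$ for invertible $b$), or you need a genuinely different verification that $bN^\star\subseteq I^c$.
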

\begin{proof}
\ref{prop:infsup:inf} Let $\sharp$ be the map sending $I$ to $\bigcap_{\star\in\Lambda}I^\star$. If $\star\in\Lambda$ is arbitrary, then since $\mathcal{G}$ is an interval every element between $I$ and $I^\star$ is in $\mathcal{G}$; this applies in particular to the intersection $\bigcap_{\star\in\Lambda}I^\star$, so that $\sharp$ is actually a map from $\mathcal{G}$ to itself.

Clearly, $\sharp$ is a closure operation, and it is the infimum of $\Lambda$ in the set of all closures. Suppose $I,(I:b)\in\mathcal{G}$. Then,
\begin{equation*}
(I:b)^\sharp=\bigcap_{\star\in\Lambda}(I:b)^\star\subseteq\left(\bigcap_{\star\in\Lambda}I^\star:b\right)=(I^\sharp:b)
\end{equation*}
and thus $\sharp$ is multiplicative. Hence, it is the infimum of $\Lambda$ in $\insmult(A,B,\mathcal{G})$.

\ref{prop:infsup:sup} Let $I^\sharp$ be the intersection on the right hand side; the hypothesis guarantees that there is always some $J=J^\star$ containing $I$, and thus $I^\sharp$ is well-defined. Since $\mathcal{G}$ is an interval and $I\subseteq I^\sharp$, we see that $I^\sharp\in\mathcal{G}$; hence, the map $\sharp:I\mapsto I^\sharp$ is a self-map of $\mathcal{G}$. It is also easy to see that $\sharp$ is a closure operation and that it is the supremum of $\Lambda$ in $\insclos(A,B,\mathcal{G})$.

Suppose $I,(I:b)\in\mathcal{G}$. The module $I^\sharp$ is $\star$-closed for every $\star\in\Lambda$ and, since each $\star$ is multiplicative, also $(I^\sharp:b)$ is $\star$-closed; it follows that $(I:b)^\sharp\subseteq(I^\sharp:b)$, so that $\sharp$ is multiplicative.

The last remark follows, since the maximum of $\mathcal{G}$ is closed by every multiplicative operation.
\end{proof}

\begin{oss}
~\begin{enumerate}[(1)]
\item Part \ref{prop:infsup:sup} can be applied, in particular, if $B\in\mathcal{G}$; hence, it holds in the set of semiprime or semistar operations.
\item Let $A$ be a domain and $B$ its quotient field. If $\mathcal{G}$ is the set of nonzero fractional ideals and $\Lambda$ is the set of star operations on $A$, then part \ref{prop:infsup:sup} can be applied, since every fractional ideal $J$ is contained in some principal ideal $bA$ and $(bA)^\star=bA^\star=bA$ for every $b\in B$. This can also be seen by viewing star operations as the multiplicative operations on $(A,B,\insid(A)^\nz)$ (through Proposition \ref{prop:ex-star}) since $\insid(A)^\nz$ has a maximum ($A$ itself).
\item Let $A,B,\mathcal{G}$ be as in the previous case, and suppose that the set $\mathcal{T}$ of overrings of $A$ that are fractional ideals of $A$ has no maximum (or, equivalently, no maximal element). Associate to each $T\in\mathcal{T}$ the multiplicative operation $\wedge_T:I\mapsto IT$: then, the set $\{\wedge_T\mid T\in\mathcal{T}\}$ has no supremum in $\insmult(A,B,\mathcal{G})$. This happens, for example, if $A$ is a valuation domain without an height-one prime (as all localizations of $A$ different from the quotient field $B$ are fractional ideals) or if $A$ is a one-dimensional Noetherian domain whose integral closure is not finite over $A$.
\end{enumerate}
\end{oss}

Let $A\subseteq B$ be a ring extension, and let $B_1,B_2$ be two rings between $A$ and $B$. Let $\mathcal{G}\subseteq\inssubmod_A(B_1)\cap\inssubmod_A(B_2)$ be an interval. While multiplicative operations on $(A,B_1,\mathcal{G})$ and $(A,B_2,\mathcal{G})$ do not necessarily coincide (see Example \ref{ex:general}, points \ref{ex:general:semiprime} and \ref{ex:general:star}), we can consider both $\insmult(A,B_1,\mathcal{G})$ and $\insmult(A,B_2,\mathcal{G})$ as subsets of $\insclos(\mathcal{G})$, the set of closures on $\mathcal{G}$, so in particular it makes sense to consider their intersection. The constructions in Proposition \ref{prop:infsup} imply the following.
\begin{prop}\label{prop:infsupB1B2}
Let $A,B,B_1,B_2,\mathcal{G}$ be as above, and suppose that $\mathcal{G}$ is an interval. Let $\Lambda\subseteq\insmult(A,B_1,\mathcal{G})\cap\insmult(A,B_2,\mathcal{G})$. Then, the following hold.
\begin{enumerate}[(a)]
\item The infimum of $\Lambda$ inside $\insmult(A,B_1,\mathcal{G})$ coincides with the infimum of $\Lambda$ in $\insclos(\mathcal{G})$ and in $\insmult(A,B_2,\mathcal{G})$.
\item Suppose $\Lambda$ satisfies the hypothesis of Proposition \ref{prop:infsup}\ref{prop:infsup:sup}. Then, the supremum of $\Lambda$ inside $\insmult(A,B_1,\mathcal{G})$ coincides with the supremum of $\Lambda$ in $\insclos(\mathcal{G})$ and in $\insmult(A,B_2,\mathcal{G})$.
\end{enumerate}
\end{prop}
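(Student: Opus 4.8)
The plan is to notice that the infimum and supremum exhibited in the proof of Proposition~\ref{prop:infsup} are given by explicit formulas — an intersection of closures in part~\ref{prop:infsup:inf}, an intersection of commonly closed modules in part~\ref{prop:infsup:sup} — that involve only $\mathcal{G}$ and the family $\Lambda$ regarded as a subset of $\insclos(\mathcal{G})$, with no reference whatsoever to the ambient ring $B_1$ or $B_2$. Consequently one single self-map of $\mathcal{G}$ will serve as the relevant infimum (resp.\ supremum) simultaneously in $\insclos(\mathcal{G})$, in $\insmult(A,B_1,\mathcal{G})$ and in $\insmult(A,B_2,\mathcal{G})$, and the three coincide simply because they are the same map.

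Concretely, for part~(a) I would set $\sharp\colon I\mapsto\bigcap_{\star\in\Lambda}I^\star$, the map built in the proof of Proposition~\ref{prop:infsup}\ref{prop:infsup:inf}. That proof, read with $B=B_1$, shows $\sharp$ is a well-defined multiplicative operation on $(A,B_1,\mathcal{G})$ and is the infimum of $\Lambda$ there; it also records that $\sharp$ is the infimum of $\Lambda$ in $\insclos(\mathcal{G})$. Since $\Lambda\subseteq\insmult(A,B_2,\mathcal{G})$ as well, the identical argument with $B=B_2$ shows this very same $\sharp$ is the infimum of $\Lambda$ in $\insmult(A,B_2,\mathcal{G})$. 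Part~(b) is handled the same way, replacing $\sharp$ by $I\mapsto\bigcap\{J\in\mathcal{G}\mid I\subseteq J=J^\star\text{ for all }\star\in\Lambda\}$ and invoking Proposition~\ref{prop:infsup}\ref{prop:infsup:sup} for $B=B_1$ and then for $B=B_2$; here I would stress that the extra hypothesis — every element of $\mathcal{G}$ sits inside a module that is $\star$-closed for all $\star\in\Lambda$ — together with the interval condition on $\mathcal{G}$, is intrinsic to the pair $(\mathcal{G},\Lambda)$ and hence holds for both choices of ambient ring, so Proposition~\ref{prop:infsup}\ref{prop:infsup:sup} does apply in both cases.

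The only point that needs a word of care, and the nearest thing to an obstacle, is that infima and suprema computed in a subposet need not in general agree with those computed in the ambient poset. Here there is nothing to worry about precisely because the common construction is verified (via Proposition~\ref{prop:infsup}) to lie inside each of the three posets: if a closure $\mu\in\insclos(\mathcal{G})$ is the infimum of $\Lambda$ in $\insclos(\mathcal{G})$ and also belongs to $\insmult(A,B_i,\mathcal{G})$, then $\mu$ is a lower bound of $\Lambda$ in that subposet, every lower bound of $\Lambda$ there is a lower bound in $\insclos(\mathcal{G})$ and hence $\leq\mu$, so $\mu$ is the infimum in the subposet too; the dual statement handles suprema. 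Spelling out this one elementary observation and then citing Proposition~\ref{prop:infsup} twice, once per ambient ring, completes the argument; no genuinely new estimate is required.
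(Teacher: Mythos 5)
Your proposal is correct and is essentially the argument the paper has in mind: the paper gives no written proof beyond the remark ``The constructions in Proposition~\ref{prop:infsup} imply the following,'' and what you spell out --- that the explicit formulas for $\inf\Lambda$ and $\sup\Lambda$ depend only on $(\mathcal{G},\Lambda)$ and not on the ambient ring, so the same map serves in $\insclos(\mathcal{G})$, $\insmult(A,B_1,\mathcal{G})$ and $\insmult(A,B_2,\mathcal{G})$ at once --- is exactly the intended content. Your remark about why infima and suprema in the three posets agree (the candidate verified to lie in each subposet) is the one point genuinely worth recording, and you handle it correctly.
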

We shall study in a deeper way the relationship between $\insmult(A,B_1,\mathcal{G})$ and $\insmult(A,B_2,\mathcal{G})$ in Section \ref{sect:functoriality}.

\section{Changing $\mathcal{G}$ and principal operations}\label{sect:Gprinc}
In this section, we want to analyze how the set of multiplicative operations changes by going from $\mathcal{G}_1$ to $\mathcal{G}_2$ (and conversely), where $\mathcal{G}_1\subseteq\mathcal{G}_2$ are two subsets of $\inssubmod_A(B)$ (with $A$ and $B$ remaining fixed). The passage from $\mathcal{G}_2$ to $\mathcal{G}_1$ can be obtained by restriction.
\begin{prop}\label{prop:restriction}
Let $A\subseteq B$ be a ring extension, and let $\mathcal{G}_1\subseteq\mathcal{G}_2$ be two subsets of $\inssubmod_A(B)$.
\begin{enumerate}[(a)]
\item\label{prop:restriction:ex} If $\star\in\insmult(A,B,\mathcal{G}_2)$ restricts to a map $\star':\mathcal{G}_1\longrightarrow\mathcal{G}_1$, then $\star'\in\insmult(A,B,\mathcal{G}_1)$.
\item\label{prop:restriction:map} If $\mathcal{G}_1$ is an interval and $\mathcal{G}_1$ and $\mathcal{G}_2$ have a common maximum, then every multiplicative operation on $(A,B,\mathcal{G}_2)$ can be restricted to $\mathcal{G}_1$, and the map
\begin{equation*}
\begin{aligned}
\rho\colon\insmult(A,B,\mathcal{G}_2) & \longrightarrow\insmult(A,B,\mathcal{G}_1)\\
\star & \longmapsto\star|_{\mathcal{G}_1}
\end{aligned}
\end{equation*}
is well-defined and order-preserving.
\end{enumerate}
\end{prop}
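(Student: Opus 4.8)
The plan is to prove part \ref{prop:restriction:ex} first, essentially observing that every defining property of a multiplicative operation is inherited by restriction, and then to deduce part \ref{prop:restriction:map} by showing that the common maximum forces the $\star$-closure of any element of $\mathcal{G}_1$ to remain inside $\mathcal{G}_1$.

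For \ref{prop:restriction:ex}, suppose $\star\in\insmult(A,B,\mathcal{G}_2)$ restricts to a self-map $\star'$ of $\mathcal{G}_1$, so that $I^{\star'}=I^\star$ for every $I\in\mathcal{G}_1$. Extensivity, order-preservation and idempotency of $\star'$ on $\mathcal{G}_1$ are immediate from the corresponding properties of $\star$ on the larger set $\mathcal{G}_2\supseteq\mathcal{G}_1$. For condition $(\dagger)$, I would take $I\in\mathcal{G}_1$ and $b\in B$ with $(I:b)\in\mathcal{G}_1$; then $I,(I:b)\in\mathcal{G}_2$, so applying $(\dagger)$ for $\star$ gives $(I:b)^{\star'}=(I:b)^\star\subseteq(I^\star:b)=(I^{\star'}:b)$. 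Hence $\star'\in\insmult(A,B,\mathcal{G}_1)$.

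For \ref{prop:restriction:map}, let $M$ denote the common maximum of $\mathcal{G}_1$ and $\mathcal{G}_2$, and fix $\star\in\insmult(A,B,\mathcal{G}_2)$. Since $M$ is the maximum of $\mathcal{G}_2$ and $\star$ is extensive and $\mathcal{G}_2$-valued, $M\subseteq M^\star\subseteq M$, so $M$ is $\star$-closed. Now for any $I\in\mathcal{G}_1\subseteq\mathcal{G}_2$ we have $I^\star\in\mathcal{G}_2$ and therefore $I\subseteq I^\star\subseteq M$; as $I$ and $M$ both lie in $\mathcal{G}_1$ and $\mathcal{G}_1$ is an interval, it follows that $I^\star\in\mathcal{G}_1$. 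Thus $\star$ restricts to a self-map of $\mathcal{G}_1$, and part \ref{prop:restriction:ex} then yields $\star|_{\mathcal{G}_1}\in\insmult(A,B,\mathcal{G}_1)$, proving $\rho$ well-defined. Order-preservation is the last, routine point: if $\star_1\leq\star_2$ in $\insmult(A,B,\mathcal{G}_2)$, i.e.\ $I^{\star_1}\subseteq I^{\star_2}$ for all $I\in\mathcal{G}_2$, then the same containment holds for all $I\in\mathcal{G}_1$, so $\rho(\star_1)\leq\rho(\star_2)$.

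The only step that needs a genuine idea is establishing $I^\star\in\mathcal{G}_1$ in \ref{prop:restriction:map}; everything else is bookkeeping. The crux is that the common maximum $M$ is automatically $\star$-closed, which sandwiches $I^\star$ between two elements of $\mathcal{G}_1$ so that the interval hypothesis can close the argument. Without a common maximum this fails (as with the passage from fractional ideals to all submodules of the quotient field, cf.\ Example \ref{ex:general}\ref{ex:general:star}), which is why that hypothesis is essential rather than cosmetic.
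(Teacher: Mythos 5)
Your proof is correct and follows the same route as the paper: part \ref{prop:restriction:ex} is checked directly by restriction, and for part \ref{prop:restriction:map} you observe that the common maximum is automatically $\star$-closed, so $I\subseteq I^\star\subseteq M$ and the interval hypothesis forces $I^\star\in\mathcal{G}_1$. The paper's argument is identical, just more terse.
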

\begin{proof}
Part \ref{prop:restriction:ex} is obvious. For part \ref{prop:restriction:map}, let $T$ be the maximum of $\mathcal{G}_1$ and $\mathcal{G}_2$. Then, $T^\star=T$ for every $\star\in\insmult(A,B,\mathcal{G}_2)$, and thus $I^\star\subseteq T$ for every $I\in\mathcal{G}_1$. Since $\mathcal{G}_1$ is an interval, it follows that $I^\star\in\mathcal{G}_1$ for every $I\in\mathcal{G}_1$, and thus $\star$ restricts to a multiplicative operation on $\mathcal{G}_1$. Hence, $\rho$ is well-defined; the fact that it is order-preserving follows immediately from the definitions.
\end{proof}

To study extensions from $\mathcal{G}_1$ to $\mathcal{G}_2$, we need a way to build closures: we do so by attaching to each element of $\mathcal{G}$ a multiplicative operation. We shall give the definition in the quasi-upward closed case, and prove the extension property in the upward closed case; the latter condition, in particular, does not cover all important cases (in particular, it misses star operations), but will be enough for the applications in Section \ref{sect:Artin}.

\begin{defin}
Suppose $\mathcal{G}$ is quasi-upward closed, and let $J\in\mathcal{G}$. The multiplicative operation \emph{generated by $J$} on $(A,B,\mathcal{G})$ is the closure
\begin{equation*}
\princ_J:=\sup\{\star\in\insmult(A,B,\mathcal{G})\mid J=J^\star\}.
\end{equation*}
Likewise, if $\mathcal{S}\subseteq\mathcal{G}$, the multiplicative operation \emph{generated by $\mathcal{S}$} is
\begin{equation*}
\princ_\mathcal{S}:=\sup\{\star\in\insmult(A,B,\mathcal{G})\mid J=J^\star\text{~for all~}J\in\mathcal{S}\}=\inf\{\princ_J\mid J\in\mathcal{S}\}
\end{equation*}
If $\star=\princ_J$ for some $J\in\mathcal{G}$, we say that $\star$ is a \emph{principal} multiplicative operation.
\end{defin}

Note that the supremum is well-defined since $\mathcal{G}$ has a maximum, and so the supremum exist (and belongs to $\insmult(A,B,\mathcal{G})$) by Proposition \ref{prop:infsup}\ref{prop:infsup:sup}. 
\begin{lemma}\label{lemma:chiusi}
Suppose $\mathcal{G}$ is quasi-upward closed.  Then, $\star=\star_{\mathcal{G}^\star}$ for every $\star\in\insmult(A,B,\mathcal{G})$.
\end{lemma}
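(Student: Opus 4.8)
The plan is to unwind the definition of $\star_{\mathcal{G}^\star}$ and observe that $\star$ itself is simultaneously a member of, and an upper bound for, the family of multiplicative operations whose supremum defines $\star_{\mathcal{G}^\star}$; a supremum of a family that happens to contain one of its own upper bounds equals that element, so this gives the result. Concretely, I would first record that $\star_{\mathcal{G}^\star}$ is well-defined: since $\mathcal{G}$ is quasi-upward closed it has a maximum $T$, and $T^\sigma=T$ for every $\sigma\in\insmult(A,B,\mathcal{G})$ (the maximum is closed under every multiplicative operation), so the family
\[
\mathcal{H}:=\{\sigma\in\insmult(A,B,\mathcal{G})\mid J=J^\sigma\text{ for every }J\in\mathcal{G}^\star\}
\]
is nonempty and its supremum $\star_{\mathcal{G}^\star}=\sup\mathcal{H}$ exists by Proposition \ref{prop:infsup}\ref{prop:infsup:sup}.

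Next I would carry out the two short steps. That $\star\in\mathcal{H}$ is immediate, since $\mathcal{G}^\star$ is by definition the set of modules fixed by $\star$. For the other step — that $\star$ is an upper bound of $\mathcal{H}$ — I would use the characterization of the order on $\insclos(\mathcal{G})$ recalled in Section \ref{sect:background}, namely $\sigma\leq\star$ exactly when every $\star$-closed module is $\sigma$-closed. Given $\sigma\in\mathcal{H}$ and $I\in\mathcal{G}$ with $I=I^\star$, we have $I\in\mathcal{G}^\star$, and since $\sigma$ fixes every element of $\mathcal{G}^\star$ it follows that $I=I^\sigma$; hence $\sigma\leq\star$. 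Combining the two steps, $\star$ belongs to $\mathcal{H}$ and dominates every element of $\mathcal{H}$, so $\star=\max\mathcal{H}=\sup\mathcal{H}=\star_{\mathcal{G}^\star}$.

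I do not expect any genuine obstacle: the argument is a direct manipulation of definitions together with the order characterization on closures. The only point that requires a little care is the existence of the supremum defining $\star_{\mathcal{G}^\star}$, which is exactly why the hypothesis that $\mathcal{G}$ is quasi-upward closed is needed (via Proposition \ref{prop:infsup}\ref{prop:infsup:sup}). As an alternative route one could avoid suprema entirely by checking, using Proposition \ref{prop:prop}\ref{prop:prop:Istar} and Corollary \ref{cor:ugclosed}, that $\star$ and $\star_{\mathcal{G}^\star}$ have the same collection of closed modules; but the upper-bound argument above is cleaner and more transparent.
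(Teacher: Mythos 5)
Your proof is correct and a little cleaner than the paper's, which it also anticipates as an ``alternative route.'' The paper proceeds by showing that $\star$ and $\princ_{\mathcal{G}^\star}$ fix exactly the same modules: first it shows $\princ_{\mathcal{G}^\star}\geq\star$ (because each $I\in\mathcal{G}^\star$ gives $\princ_I\geq\star$, and $\princ_{\mathcal{G}^\star}$ is the infimum of the $\princ_I$), which yields $\mathcal{G}^{\princ_{\mathcal{G}^\star}}\subseteq\mathcal{G}^\star$; then it checks that each $I\in\mathcal{G}^\star$ is $\princ_{\mathcal{G}^\star}$-closed, giving the reverse inclusion; finally it invokes Corollary \ref{cor:ugclosed} to upgrade equality of closed sets to equality of operations. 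You instead observe that $\star$ lies in the family $\mathcal{H}$ over which the supremum defining $\princ_{\mathcal{G}^\star}$ is taken (that is step one, and it is exactly the paper's $\princ_{\mathcal{G}^\star}\geq\star$ in disguise), and that $\star$ is an upper bound for $\mathcal{H}$, which you read off directly from the order characterization ``$\sigma\leq\star$ iff every $\star$-closed module is $\sigma$-closed'': each $\sigma\in\mathcal{H}$ fixes all of $\mathcal{G}^\star$ by its defining property. A supremum of a family containing one of its own upper bounds is that element, so $\star=\princ_{\mathcal{G}^\star}$. This avoids Corollary \ref{cor:ugclosed} entirely and replaces the ``same closed sets'' bookkeeping by a one-line poset fact; conversely, the paper's route has the merit of making the characterization $\mathcal{G}^\star=\mathcal{G}^{\princ_{\mathcal{G}^\star}}$ visible on its own, which it uses again in later arguments. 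One small imprecision in your write-up: you justify nonemptiness of $\mathcal{H}$ via the remark that the maximum $T$ of $\mathcal{G}$ is closed under every multiplicative operation, but that is what guarantees the \emph{existence of the supremum} via Proposition \ref{prop:infsup}\ref{prop:infsup:sup}; nonemptiness of $\mathcal{H}$ is immediate because $\star$ itself lies in it.
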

\begin{proof}
Let $\mathcal{S}:=\mathcal{G}^\star$. If $I\in\mathcal{S}$, then $\princ_I\geq\star$, and thus $\princ_{\mathcal{S}}\geq\star$. In particular, $\mathcal{G}^{\princ_{\mathcal{S}}}\subseteq\mathcal{G}^\star=\mathcal{S}$. Furthermore, $I$ is closed by $\princ_{\mathcal{S}}$, and thus $\mathcal{G}^\star=\mathcal{S}\subseteq\mathcal{G}^{\princ_{\mathcal{S}}}$. Hence, $\mathcal{S}=\mathcal{G}^{\princ_{\mathcal{S}}}$ and thus $\star=\princ_{\mathcal{S}}$ by Corollary \ref{cor:ugclosed}.
\end{proof}

If $\mathcal{G}$ is upward closed, we can give a more explicit description of $\princ_J$ by means of a ``double dual'' construction.
\begin{lemma}\label{lemma:div}
Suppose that $\mathcal{G}$ is upward closed in $\inssubmod_A(B)$. Then, $I^{\princ_J}=(J:_B(J:_BI))$ for all $I\in\mathcal{G}$.
\end{lemma}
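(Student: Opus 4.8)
The plan is to show that the ``$J$-divisorial'' map $d_J\colon I\mapsto(J:_B(J:_BI))$ is a multiplicative operation on $(A,B,\mathcal{G})$ that fixes $J$ and dominates every multiplicative operation fixing $J$; by the definition of $\princ_J$ as a supremum this gives $\princ_J=d_J$. Note first that, since $\mathcal{G}$ is upward closed and nonempty, $B$ is its maximum, so $\mathcal{G}$ is in particular quasi-upward closed and $\princ_J$ is well defined by Proposition \ref{prop:infsup}\ref{prop:infsup:sup}. For Step 1 ($d_J\in\insmult(A,B,\mathcal{G})$): extensivity ($I\subseteq(J:_B(J:_BI))$) and the order-reversal of a single colon are immediate, and they yield the classical identity $(J:_BI)=(J:_Bd_J(I))$ and hence idempotency, so $d_J$ is a closure operation on $\inssubmod_A(B)$. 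Being extensive, $I\subseteq d_J(I)$; since $\mathcal{G}$ is upward closed this forces $d_J(I)\in\mathcal{G}$ whenever $I\in\mathcal{G}$, so $d_J$ restricts to a closure operation on $\mathcal{G}$. For condition $(\dagger)$ I would verify the product form $b\,d_J(I)\subseteq d_J(bI)$ for all $I\in\inssubmod_A(B)$ and $b\in B$: writing $d_J(bI)=(J:_B((J:_BI):_Bb))$, one checks in one line that if $w(J:_BI)\subseteq J$ and $ub\in(J:_BI)$ then $(bw)u=w(ub)\in J$. Lemma \ref{lemma:defproduct}, applied with $\inssubmod_A(B)$ in the role of $\mathcal{G}$, then upgrades this to the colon inequality $(I:_Bb)^{d_J}\subseteq(I^{d_J}:_Bb)$, which restricted to $\mathcal{G}$ is exactly $(\dagger)$.

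For Step 2 I prove the two inequalities. Since $1\in(J:_BJ)$, every $x\in d_J(J)$ satisfies $x=x\cdot 1\in J$, so $d_J(J)=J$; hence $d_J$ belongs to the set of multiplicative operations fixing $J$, and therefore $d_J\leq\princ_J$. Conversely, let $\star\in\insmult(A,B,\mathcal{G})$ with $J^\star=J$, fix $I\in\mathcal{G}$, and take any $b\in(J:_BI)$. Then $I\subseteq(J:_Bb)$, and because $\mathcal{G}$ is upward closed we get $(J:_Bb)\in\mathcal{G}$; applying $(\dagger)$ to the pair $(J,b)$ gives $(J:_Bb)^\star\subseteq(J^\star:_Bb)=(J:_Bb)$, so $(J:_Bb)$ is $\star$-closed. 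Monotonicity then yields $I^\star\subseteq(J:_Bb)^\star=(J:_Bb)$, i.e.\ $bI^\star\subseteq J$. As $b$ ranged over $(J:_BI)$, we conclude $(J:_BI)\,I^\star\subseteq J$, that is $I^\star\subseteq(J:_B(J:_BI))=d_J(I)$. Hence $\star\leq d_J$ for every such $\star$, so $\princ_J\leq d_J$, and combining the two inequalities gives $\princ_J=d_J$ as desired. (One could instead compare closed sets and invoke Corollary \ref{cor:ugclosed}, but the direct order comparison seems cleanest.)

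The only genuinely delicate point is the membership $(J:_Bb)\in\mathcal{G}$ used in Step 2 (and, analogously, $d_J(I)\in\mathcal{G}$ in Step 1): this is exactly where the hypothesis ``$\mathcal{G}$ upward closed'' is used and cannot be relaxed to ``quasi-upward closed'', since both arguments depend on colon and double-colon modules remaining inside $\mathcal{G}$. Everything else is the standard bookkeeping for divisorial-type closure operations.
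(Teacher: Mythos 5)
Your proof is correct and follows essentially the same route as the paper's: introduce the map $d_J\colon I\mapsto(J:_B(J:_BI))$, show it is a multiplicative operation fixing $J$, and show it dominates every multiplicative operation fixing $J$, so that it must equal the supremum $\princ_J$. The only real difference is cosmetic — you spell out the product inequality $b\,d_J(I)\subseteq d_J(bI)$ directly, where the paper cites it from external references, and you phrase the upper bound as $\star\le d_J$ for each $\star$ fixing $J$ rather than as ``$d_J(I)$ is an intersection of $\princ_J$-closed modules''; both rest on the same observation that $(J:_Bb)$ is closed under any $\star$ closing $J$.
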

\begin{proof}
For simplicity of notation, we use $(I:J)$ for $(I:_BJ)$.

We first show that
\begin{equation*}
(J:(J:I))=\bigcap\{(J:b)\mid I\subseteq(J:b)\}.
\end{equation*}
Indeed, if $x\in(J:(J:I))$ then $x(J:I)\subseteq J$. If $I\subseteq(J:b)$, then $b\in(J:I)$, and thus $xb\in J$; hence, $x\in(J:b)$ and thus $x$ is in the intersection. Conversely, if $x$ is in the intersection and $s\in(J:I)$, then $I\subseteq(J:s)$ and thus $x\in(J:s)$, i.e., $xs\in J$; since $s$ was arbitrary, $x(J:I)\subseteq J$, or equivalently $x\in(J:(J:I))$.

Hence, the map $\sharp$ sending $I$ to $(J:(J:I))$ is a closure. With a reasoning analogous to the one in \cite[Chapter 3, Proposition 3.6]{knebush-II} or \cite[Proposition 3.2]{hhp_m-canonical}, we see that it satisfies $b\cdot I^\sharp\subseteq (bI)^\sharp$ for every $I\in\inssubmod_A(B)$ and every $b$, and thus it is multiplicative on $(A,B,\inssubmod_A(B))$ by Lemma \ref{lemma:defproduct}; hence, it is also multiplicative on $(A,B,\mathcal{G})$. Since $J^\sharp=(J:(J:J))=J$, we have $\princ_J\leq\sharp$; conversely, since $J$ is $\princ_J$-closed then also each $(J:b)$ is $\princ_J$-closed, and so also $(J:(J:I))$ is $\princ_J$-closed, being the intersection of $\princ_J$-closed modules. Hence, $I^{\princ_J}\subseteq(J:(J:I))$ and so $\princ_J\leq\sharp$. Therefore, $\princ_J=\sharp$, as claimed.
\end{proof}

As defined, the multiplicative operation $\princ_J$ is relative to the triple $(A,B,\mathcal{G})$. However, Lemma \ref{lemma:div} shows that it is essentially independent from $\mathcal{G}$.
\begin{cor}\label{cor:princ-ug}
Let $\mathcal{G}_1\subseteq\mathcal{G}_2$ be upward closed subsets, and let $J\in\mathcal{G}_1$; for $i\in\{1,2\}$, let $\star_i$ be the principal operation generated by $J$ on $(A,B,\mathcal{G}_i)$. Then, $\star_2|_{\mathcal{G}_1}=\star_1$.
\end{cor}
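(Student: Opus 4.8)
The plan is to reduce the whole statement to the explicit formula for $\princ_J$ provided by Lemma \ref{lemma:div}. First I would observe that a nonempty upward closed subset $\mathcal{G}\subseteq\inssubmod_A(B)$ is automatically quasi-upward closed: it is an interval (if $x\leq z\leq y$ with $x\in\mathcal{G}$, then upward closure forces $z\in\mathcal{G}$), and since every element of $\inssubmod_A(B)$ is contained in $B$, upward closure also gives $B\in\mathcal{G}$, so $B$ is the maximum of $\mathcal{G}$. Hence the principal operation generated by $J$ is defined on both $(A,B,\mathcal{G}_1)$ and $(A,B,\mathcal{G}_2)$, and Lemma \ref{lemma:div} applies to each triple.

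Next I would simply invoke Lemma \ref{lemma:div} twice. For every $I\in\mathcal{G}_2$ it gives $I^{\star_2}=(J:_B(J:_BI))$, and for every $I\in\mathcal{G}_1$ it gives $I^{\star_1}=(J:_B(J:_BI))$. Since $\mathcal{G}_1\subseteq\mathcal{G}_2$, any $I\in\mathcal{G}_1$ is also an element of $\mathcal{G}_2$, so both formulas are available for such an $I$, and comparing them yields $I^{\star_1}=(J:_B(J:_BI))=I^{\star_2}$. As $I\in\mathcal{G}_1$ was arbitrary, this is precisely the asserted equality $\star_2|_{\mathcal{G}_1}=\star_1$.

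I do not expect any real obstacle here: the entire content sits in Lemma \ref{lemma:div}, and this corollary is just the remark that the ``double dual'' expression $(J:_B(J:_BI))$ is intrinsic and does not depend on the ambient family $\mathcal{G}$. The only point requiring a word of care is that the right-hand side genuinely lies in each $\mathcal{G}_i$ so that the formulas make sense, but this is already part of Lemma \ref{lemma:div}, since it states $I^{\princ_J}=(J:_B(J:_BI))$ and $I^{\princ_J}\in\mathcal{G}_i$ holds by definition of a closure operation on $\mathcal{G}_i$.
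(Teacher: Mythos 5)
Your proof is correct and follows the same route as the paper: both arguments reduce immediately to the explicit formula $I^{\princ_J}=(J:_B(J:_BI))$ of Lemma \ref{lemma:div}, applied on each $\mathcal{G}_i$. The extra remark that a nonempty upward closed family is automatically quasi-upward closed (with maximum $B$) is a sensible sanity check but not a departure from the paper's reasoning.
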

\begin{proof}
By Lemma \ref{lemma:div}, for every $I\in\mathcal{G}_1$ we have $I^{\star_2}=(J:_B(J:_BI))=I^{\star_1}$. The claim is proved.
\end{proof}

Lemma \ref{lemma:div} does not hold if $\mathcal{G}$ is only quasi-upward closed. For example, let $A$ be an integral domain, $B$ its quotient field and $\mathcal{G}:=\insid(A)$. If $J\in\mathcal{G}$ satisfies $(J:J)\neq A$, then the map $I\mapsto(J:(J:I))$ sends $A$ to $(J:J)\notin\mathcal{G}$, and thus it is not equal to the principal operation generated by $J$. Furthermore, if $\mathcal{G}':=\insfracid(A)$, then with the same notation the multiplicative operation generated by $J$ on $(A,B,\mathcal{G})$ will be different from the multiplicative operation generated by $J$ on $(A,B,\mathcal{G}')$.

\begin{prop}\label{prop:extension}
Let $A\subseteq B$ be a ring extension, and let $\mathcal{G}_1\subseteq\mathcal{G}_2$ be two upward closed subsets of $\inssubmod_A(B)$. Then, the following hold.
\begin{enumerate}[(a)]
\item\label{prop:extension:ext} For every $\star\in\insmult(A,B,\mathcal{G}_1)$, the map
\begin{equation*}
\begin{aligned}
\widehat{\star}\colon\mathcal{G}_2 & \longrightarrow\mathcal{G}_2\\
I & \longmapsto\bigcap_{\substack{J\in\mathcal{G}_1\\ J=J^\star}}(J:_B(J:_BI))
\end{aligned}
\end{equation*}
is a multiplicative operation on $(A,B,\mathcal{G}_2)$.
\item\label{prop:extension:map} The map
\begin{equation*}
\begin{aligned}
\eta\colon\insmult(A,B,\mathcal{G}_1) & \longrightarrow\insmult(A,B,\mathcal{G}_2)\\
\star & \longmapsto\widehat{\star}
\end{aligned}
\end{equation*}
is well-defined and order-preserving.
\item\label{prop:extension:inv} If $\rho$ is the map defined in Proposition \ref{prop:restriction}, then $\rho\circ\eta$ is the identity on $\insmult(A,B,\mathcal{G}_1)$, i.e., $\widehat{\star}|_{\mathcal{G}_1}=\star$. In particular, $\eta$ is injective and $\rho$ is surjective.
\end{enumerate}
\end{prop}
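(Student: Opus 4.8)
The plan is to recognise $\widehat{\star}$ as an infimum of principal operations and then read off all three parts almost formally. Fix $\star\in\insmult(A,B,\mathcal{G}_1)$ and write $\mathcal{S}:=\mathcal{G}_1^{\star}$ for its set of closed modules. Each $J\in\mathcal{S}$ also lies in $\mathcal{G}_2$, and since $\mathcal{G}_2$ is upward closed Lemma~\ref{lemma:div} identifies the map $I\mapsto(J:_B(J:_BI))$ on $\mathcal{G}_2$ with the principal operation $\princ_J$ on $(A,B,\mathcal{G}_2)$. Hence $\widehat{\star}$ is the pointwise intersection of the family $\{\princ_J\mid J\in\mathcal{S}\}\subseteq\insmult(A,B,\mathcal{G}_2)$, which is nonempty since $B\in\mathcal{S}$ (a nonempty upward closed subset of $\inssubmod_A(B)$ contains $B=B^{\star}$). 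Because $\mathcal{G}_2$, being upward closed, is an interval, Proposition~\ref{prop:infsup}\ref{prop:infsup:inf} shows that this intersection is exactly $\inf\{\princ_J\mid J\in\mathcal{S}\}$ computed in $\insmult(A,B,\mathcal{G}_2)$; in particular $\widehat{\star}$ is a multiplicative operation on $(A,B,\mathcal{G}_2)$ (part~\ref{prop:extension:ext}) and $\eta$ is well-defined.

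For the order-preservation in part~\ref{prop:extension:map}, recall that $\star_1\leq\star_2$ in $\insmult(A,B,\mathcal{G}_1)$ is equivalent to $\mathcal{G}_1^{\star_2}\subseteq\mathcal{G}_1^{\star_1}$. Since $\widehat{\star_i}=\inf\{\princ_J\mid J\in\mathcal{G}_1^{\star_i}\}$ and an infimum over a larger index set is smaller (infima are pointwise intersections, by Proposition~\ref{prop:infsup}\ref{prop:infsup:inf}), it follows that $\widehat{\star_1}\leq\widehat{\star_2}$. For part~\ref{prop:extension:inv}, fix $I\in\mathcal{G}_1$ and show $I^{\widehat{\star}}=I^{\star}$. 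If $J\in\mathcal{S}$ then $\star$ closes $J$, so $\star\leq\princ_J$ and hence $I^{\star}\subseteq I^{\princ_J}=(J:_B(J:_BI))$; intersecting over $J\in\mathcal{S}$ gives $I^{\star}\subseteq I^{\widehat{\star}}$. Conversely, $J_0:=I^{\star}$ is $\star$-closed and lies in $\mathcal{G}_1$ (as $I\subseteq I^{\star}$ and $\mathcal{G}_1$ is upward closed), so it is one of the terms of the defining intersection; since $1\in(I^{\star}:_BI)$, we get $(I^{\star}:_B(I^{\star}:_BI))\subseteq(I^{\star}:_B1)=I^{\star}$, whence $I^{\widehat{\star}}\subseteq I^{\star}$. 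Thus $\widehat{\star}|_{\mathcal{G}_1}=\star$, i.e.\ $\rho\circ\eta=\mathrm{id}$, and consequently $\eta$ is injective and $\rho$ is surjective.

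Given Lemma~\ref{lemma:div} and Proposition~\ref{prop:infsup}, the argument is mostly bookkeeping; the one delicate point is the inclusion $I^{\widehat{\star}}\subseteq I^{\star}$ in part~\ref{prop:extension:inv}. A priori $\widehat{\star}$ could restrict to a strictly smaller operation than $\star$ on $\mathcal{G}_1$, and what rules this out is the choice of the particular closed module $J_0=I^{\star}$ together with the fact that $1$ lies in the conductor $(I^{\star}:_BI)$; the upward-closedness of $\mathcal{G}_1$ is used precisely to guarantee $J_0\in\mathcal{G}_1$, so that it actually occurs in the intersection. I do not expect any further obstacle.
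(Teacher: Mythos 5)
Your proof is correct and follows essentially the same route as the paper: both identify $\widehat{\star}$, via Lemma~\ref{lemma:div} and the interval property of an upward closed $\mathcal{G}_2$, with the principal multiplicative operation generated by $\mathcal{G}_1^\star$ on $(A,B,\mathcal{G}_2)$, i.e. $\inf\{\princ_J\mid J\in\mathcal{G}_1^\star\}$, from which parts~\ref{prop:extension:ext} and~\ref{prop:extension:map} fall out. For part~\ref{prop:extension:inv} the paper invokes Lemma~\ref{lemma:chiusi} to identify $\star$ itself with the operation generated by $\mathcal{G}_1^\star$ inside $\mathcal{G}_1$ and then appeals to Corollary~\ref{cor:princ-ug}, whereas you verify the two inclusions $I^\star\subseteq I^{\widehat{\star}}$ and $I^{\widehat{\star}}\subseteq I^\star$ directly (using $J_0:=I^\star$ as a witness for the second); this is just an unpacking of the same argument, not a genuinely different one. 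One tiny remark: you don't actually need upward-closedness of $\mathcal{G}_1$ to see that $J_0=I^\star\in\mathcal{G}_1$ — that holds simply because $\star$ is a self-map of $\mathcal{G}_1$ — but this does not affect the correctness.
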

\begin{proof}
By Lemma \ref{lemma:div} and Corollary \ref{cor:princ-ug}, $\widehat{\star}$ is just the multiplicative operation generated by $\mathcal{G}_1^\star:=\{J\in\mathcal{G}_1\mid J=J^\star\}$ on $\mathcal{G}_2$, so that $\eta:\star\longmapsto\widehat{\star}$ is well-defined; the fact that $\eta$ is order-preserving is obvious.

Furthermore, by Lemma \ref{lemma:chiusi}, $\star$ can be seen as the multiplicative operation generated by $\mathcal{G}_1^\star$ in $\mathcal{G}_1$: hence, if $I\in\mathcal{G}_1$ then $I^\star=\bigcap\{(J:(J:I))\mid J\in\mathcal{G}_1^\star\}$, and in particular $I^{\star}=I^{\widehat{\star}}$. Thus $\widehat{\star}$ restricts to $\star$ and $\rho\circ\eta$ is the identity.
\end{proof}

In general, the extension of a multiplicative operation $\star$ on $(A,B,\mathcal{G})$ is not unique, either for trivial (for example, if $\mathcal{G}_1=\{B\}$) or nontrivial reason (see Example \ref{ex:dvr-sp} below).

We now use principal operations to introduce an order on $\mathcal{G}$.
\begin{defin}
Let $A\subseteq B$ be a ring extension, and let $\mathcal{G}\subseteq\inssubmod_A(B)$ be quasi-upward closed. Let $I,J\in\mathcal{G}$. We say that $I$ is \emph{multiplicatively minor than $J$ on $(A,B,\mathcal{G})$} if $\princ_I\geq\princ_J$ (where $\princ_I$ denotes the principal operation generated by $I$ on $(A,B,\mathcal{G})$), or equivalently if $J$ is $\princ_I$-closed. In this case, we write $I\ordine_{(A,B,\mathcal{G})} J$, or $I\ordine J$ if the triple $(A,B,\mathcal{G})$ is understood from the context. We call $\ordine$ the \emph{multiplicative order} of $\mathcal{G}$ with respect to $A\subseteq B$.
\end{defin}

Note that, like the operation generated by a module, the relation $\ordine$ is relative to the triple $(A,B,\mathcal{G})$. Corollary \ref{cor:princ-ug} can be naturally reparaphrased using $\ordine$.
\begin{prop}
Let $\mathcal{G}_1\subseteq\mathcal{G}_2$ be upward closed, and let $I,J\in\mathcal{G}_1$. Then, $I\ordine_{(A,B,\mathcal{G}_1)} J$ if and only if $I\ordine_{(A,B,\mathcal{G}_2)} J$.
\end{prop}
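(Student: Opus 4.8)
The plan is to unwind the definition of $\ordine$ and then invoke Corollary~\ref{cor:princ-ug}. By definition, $I\ordine_{(A,B,\mathcal{G}_i)} J$ means that $\princ_I^{(i)}\geq\princ_J^{(i)}$, where $\princ_I^{(i)}$ denotes the principal operation generated by $I$ on $(A,B,\mathcal{G}_i)$; equivalently, it means that $J$ is $\princ_I^{(i)}$-closed, i.e.\ $J^{\princ_I^{(i)}}=J$. So the whole statement reduces to comparing $J^{\princ_I^{(1)}}$ with $J^{\princ_I^{(2)}}$ for $I,J\in\mathcal{G}_1$.

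First I would observe that since $\mathcal{G}_1,\mathcal{G}_2$ are upward closed and $I\in\mathcal{G}_1\subseteq\mathcal{G}_2$, both principal operations $\princ_I^{(1)}$ and $\princ_I^{(2)}$ are defined, and Corollary~\ref{cor:princ-ug} (with the roles of $\mathcal{G}_1,\mathcal{G}_2$ there played by our $\mathcal{G}_1,\mathcal{G}_2$, and the generator $J$ there being our $I$) gives $\princ_I^{(2)}|_{\mathcal{G}_1}=\princ_I^{(1)}$. In particular, evaluating at $J\in\mathcal{G}_1$,
\begin{equation*}
J^{\princ_I^{(1)}}=J^{\princ_I^{(2)}}.
\end{equation*}
Hence $J$ is $\princ_I^{(1)}$-closed if and only if it is $\princ_I^{(2)}$-closed, which is exactly the assertion $I\ordine_{(A,B,\mathcal{G}_1)} J\iff I\ordine_{(A,B,\mathcal{G}_2)} J$.

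There is essentially no obstacle here: the content is entirely packaged in Corollary~\ref{cor:princ-ug} (itself a consequence of the $\mathcal{G}$-independent formula $I^{\princ_J}=(J:_B(J:_BI))$ of Lemma~\ref{lemma:div}), and all that remains is to translate the "$\princ_I\geq\princ_J$" formulation of $\ordine$ into the equivalent "$J$ is $\princ_I$-closed" formulation, which is recorded in the definition of the multiplicative order. The only point requiring a word of care is to note that "$J$ is $\princ_I$-closed" is a statement purely about the value of $\princ_I$ at the single module $J$, so that the equality of restrictions $\princ_I^{(2)}|_{\mathcal{G}_1}=\princ_I^{(1)}$ suffices even though $\princ_I^{(2)}$ and $\princ_I^{(1)}$ act on different domains. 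I would therefore write the proof in two short sentences: invoke Corollary~\ref{cor:princ-ug} to get $J^{\princ_I^{(1)}}=J^{\princ_I^{(2)}}$, and conclude.
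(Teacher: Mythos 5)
Your proof is correct and takes essentially the same route as the paper: the paper states directly that both conditions unwind to $J=(I:_B(I:_BJ))$ via Lemma~\ref{lemma:div}, while you reach the same conclusion by invoking Corollary~\ref{cor:princ-ug}, which is itself an immediate consequence of that lemma. The only difference is one harmless level of indirection.
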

\begin{proof}
Both conditions are equivalent to $J=(I:_B(I:_BJ))$.
\end{proof}

The relation $\ordine=\ordine_{(A,B,\mathcal{G})}$ is easily seen to be reflexive and transitive, and thus a preorder. In general, it is not an order: for example, if $A$ is a domain, $B$ its quotient field and $\mathcal{G}=\inssubmod_A(B)$, then $I\ordine bI$ and $bI\ordine I$ for every nonzero $b\in B$. We denote by $[\mathcal{G}]$ the set of equivalence classes of $\mathcal{G}$ through the equivalence relation induced by $\ordine$ on $\mathcal{G}$ (i.e., such that $I$ and $J$ are equivalent if and only if $I\ordine J$ and $J\ordine I$), and by $[I]$ the class of $I\in\mathcal{G}$. For ease of notation, we continue to use $\ordine$ to denote the order induced on $[\mathcal{G}]$ by the preorder on $\mathcal{G}$.

\begin{prop}\label{prop:ordine}
Let $\mathcal{G}$ be quasi-upward closed. Then, the following hold.
\begin{enumerate}[(a)]
\item\label{prop:ordine:min} If $T$ is the maximum of $\mathcal{G}$, then $[T]$ is the minimum of $([\mathcal{G}],\ordine)$.
\item\label{prop:ordine:downward} The set $\mathcal{G}^\star$ is downward closed in $(\mathcal{G},\ordine)$.
\item\label{prop:ordine:princ} For every $J\in\mathcal{G}$, the set $\mathcal{G}^{\princ_J}$ is the down-set of $J$ in $(\mathcal{G},\ordine)$.
\item\label{prop:ordine:max} If $([\mathcal{G}],\ordine)$ has a maximum $[\omega]$, then $\princ_\omega$ is the identity.
\end{enumerate}
\end{prop}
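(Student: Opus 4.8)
The four assertions are formal consequences of the definition $I\ordine J\iff\princ_I\geq\princ_J$ together with two basic facts about the operations $\princ_{(-)}$: \textbf{(1)} for every $M\in\mathcal{G}$ the operation $\princ_M$ closes $M$ (this is visible from the explicit intersection formula in Proposition \ref{prop:infsup}\ref{prop:infsup:sup}, which exhibits $M^{\princ_M}$ as an intersection of submodules containing $M$ among which $M$ itself appears); and \textbf{(2)} whenever a multiplicative operation $\star$ satisfies $M=M^\star$ one has $\star\leq\princ_M$, by the very definition of $\princ_M$ as a supremum. The plan is to prove \ref{prop:ordine:min} and \ref{prop:ordine:downward} from these, and then to read off \ref{prop:ordine:princ} and \ref{prop:ordine:max} as corollaries.

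For \ref{prop:ordine:min}: since $T$ is the maximum of $\mathcal{G}$, every $\star\in\insmult(A,B,\mathcal{G})$ satisfies $T\subseteq T^\star\in\mathcal{G}$, hence $T^\star=T$; thus $\{\star\mid T=T^\star\}$ is the whole of $\insmult(A,B,\mathcal{G})$ and $\princ_T=\sup\insmult(A,B,\mathcal{G})$ (the supremum existing by Proposition \ref{prop:infsup}\ref{prop:infsup:sup}). In particular $\princ_T\geq\princ_I$ for every $I\in\mathcal{G}$, i.e. $T\ordine I$; passing to classes, $[T]\ordine[I]$ for all $[I]\in[\mathcal{G}]$, so $[T]$ is the minimum.

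For \ref{prop:ordine:downward}: suppose $J\in\mathcal{G}^\star$ and $I\ordine J$. By fact (2), $J=J^\star$ gives $\star\leq\princ_J$; by definition $I\ordine J$ gives $\princ_J\leq\princ_I$; hence $\star\leq\princ_I$. Since $\princ_I$ closes $I$ by fact (1), and since $c_1\leq c_2$ means that $c_2$-closed modules are $c_1$-closed, we conclude $I=I^\star$, i.e. $I\in\mathcal{G}^\star$. Now \ref{prop:ordine:princ} follows: taking $\star=\princ_J$ in \ref{prop:ordine:downward} and noting $J\in\mathcal{G}^{\princ_J}$ (fact (1)) shows that the down-set of $J$ is contained in $\mathcal{G}^{\princ_J}$; conversely, if $I\in\mathcal{G}^{\princ_J}$ then $\princ_J$ closes $I$, so fact (2) gives $\princ_J\leq\princ_I$, which is exactly $I\ordine J$. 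Finally \ref{prop:ordine:max} is immediate: if $[\omega]$ is the maximum of $([\mathcal{G}],\ordine)$ then $I\ordine\omega$ for every $I\in\mathcal{G}$, so the down-set of $\omega$ is all of $\mathcal{G}$, and \ref{prop:ordine:princ} yields $\mathcal{G}^{\princ_\omega}=\mathcal{G}$; thus $I^{\princ_\omega}=I$ for every $I$, i.e. $\princ_\omega$ is the identity.

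There is no genuine difficulty here beyond careful bookkeeping: the one point to keep straight is the deliberate order reversal in $\ordine$ (smaller modules generate larger operations), so that ``$I\ordine J$'' must be consistently read as ``$\princ_I\geq\princ_J$'', and one must recall that $c_1\leq c_2$ translates into the implication ``$c_2$-closed $\Rightarrow$ $c_1$-closed''. The only external input is Proposition \ref{prop:infsup}\ref{prop:infsup:sup}, invoked both to know that the relevant suprema exist and, through its formula, to justify fact (1).
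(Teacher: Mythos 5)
Your proof is correct and takes essentially the same approach as the paper's, which compresses the same reasoning — $T$ is closed by every multiplicative operation for (a), the interplay between $\princ_J$ closing $J$ and $\star\leq\princ_J$ whenever $J=J^\star$ for (b) and (c), and (d) as a corollary of (c) — into a single line. Your explicit splitting into facts (1) and (2) is a clean way to make visible what the paper treats as immediate from the definitions.
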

\begin{proof}
\ref{prop:ordine:min} follows from the fact that $T$ is $\star$-closed for every $\star\in\insmult(A,B,\mathcal{G})$ (as $T\subseteq T^\star$). \ref{prop:ordine:downward} and \ref{prop:ordine:princ} follow directly from the definitions, while \ref{prop:ordine:max} is a direct consequence of \ref{prop:ordine:princ}.
\end{proof}

Note that $([\mathcal{G}],\ordine)$ may not have a maximum, or even maximal elements. If $[\omega]$ is a maximum of $([\mathcal{G}],\ordine)$, we say that $\omega$ is a \emph{canonical ideal} for $(A,B,\mathcal{G})$.

Since $\mathcal{G}^\star$ is downward closed, if we want to describe all multiplicative operations on $(A,B,\mathcal{G})$ we just have to find which downsets are in the form $\mathcal{G}^\star$. The following is a useful criterion.
\begin{prop}
Let $\mathcal{G}$ be upward closed, and let $\mathcal{D}$ be a downset of $(\mathcal{G},\ordine)$. Then, $\mathcal{D}=\mathcal{G}^\star$ for some $\star\in\insmult(A,B,\mathcal{G})$ if and only if the intersection of every subfamily of $\mathcal{D}$ is either in $\mathcal{D}$ or out of $\mathcal{G}$.
\end{prop}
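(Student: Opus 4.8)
The plan is to prove both implications by exploiting the description of multiplicative operations via their closed modules (Corollary \ref{cor:ugclosed} and Proposition \ref{prop:prop}\ref{prop:prop:Istar}), together with the double-dual formula for principal operations from Lemma \ref{lemma:div}. For the forward direction, suppose $\mathcal{D}=\mathcal{G}^\star$. If $\{I_\lambda\}\subseteq\mathcal{D}$ has intersection $I=\bigcap_\lambda I_\lambda$ lying in $\mathcal{G}$, then each $I_\lambda$ is $\star$-closed, so $I^\star\subseteq\bigcap_\lambda I_\lambda^\star=\bigcap_\lambda I_\lambda=I$; hence $I$ is $\star$-closed, i.e.\ $I\in\mathcal{D}$. (Here I would note that $I^\star\in\mathcal{G}$ automatically and that the inclusion $I^\star\subseteq I_\lambda^\star$ uses order-preservation.) So the intersection condition is necessary.

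**The converse.** For the harder direction, assume $\mathcal{D}$ is a downset of $(\mathcal{G},\ordine)$ closed under intersection (modulo leaving $\mathcal{G}$). Define, for $I\in\mathcal{G}$,
\begin{equation*}
I^\star:=\bigcap\{J\in\mathcal{D}\mid I\subseteq J\}.
\end{equation*}
First I would check this is well-defined: since $\mathcal{G}$ is upward closed, $B\in\mathcal{G}$, and I claim $B\in\mathcal{D}$ — indeed $B$ is the maximum of $\mathcal{G}$, hence $[B]$ is the minimum of $([\mathcal{G}],\ordine)$ by Proposition \ref{prop:ordine}\ref{prop:ordine:min}, so $B$ lies in every nonempty downset; thus the family defining $I^\star$ is nonempty and contains $B$. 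By the intersection hypothesis, $I^\star$ is either in $\mathcal{D}$ or out of $\mathcal{G}$; but $I\subseteq I^\star\subseteq B$ and $\mathcal{G}$ is upward closed... wait, that only gives $I^\star\in\mathcal{G}$ if $I^\star\supseteq$ something in $\mathcal{G}$, which it does ($I^\star\supseteq I\in\mathcal{G}$). Hence $I^\star\in\mathcal{G}$, and therefore $I^\star\in\mathcal{D}$. So $\star$ is a self-map of $\mathcal{G}$ with $\mathcal{G}^\star\supseteq\mathcal{D}$, and in fact $\mathcal{G}^\star=\mathcal{D}$ since every $J\in\mathcal{D}$ satisfies $J^\star=J$ (it belongs to its own defining family). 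Extensivity, order-preservation and idempotency are then formal, exactly as in Proposition \ref{prop:infsup}\ref{prop:infsup:sup}.

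**Multiplicativity — the main obstacle.** The real work is verifying $(\dagger)$: for $I\in\mathcal{G}$ and $b\in B$ with $(I:b)\in\mathcal{G}$, we need $(I:b)^\star\subseteq(I^\star:b)$. The idea is that $(I^\star:b)=\bigcap\{(J:b)\mid J\in\mathcal{D},\ I\subseteq J\}$, so it suffices to show each $(J:b)$, for $J\in\mathcal{D}$ with $I\subseteq J$, is $\star$-closed (equivalently lies in $\mathcal{D}$) and contains $(I:b)$; then $(I:b)^\star\subseteq(J:b)$ for each such $J$, and intersecting gives the claim. Containment is immediate from $I\subseteq J$. For $(J:b)\in\mathcal{D}$: if $(J:b)\notin\mathcal{G}$ there is nothing to intersect (it contributes $B$ or can be dropped — I would handle this by noting $(I^\star:b)$ only needs the $J$ for which $(J:b)\in\mathcal{G}$, or observe $(J:b)\supseteq(I:b)\in\mathcal{G}$ forces $(J:b)\in\mathcal{G}$ by upward closure); and when $(J:b)\in\mathcal{G}$, since $J\in\mathcal{D}$ and $\mathcal{D}$ is a downset for $\ordine$, it is enough to show $(J:b)\ordine J$, i.e.\ $J=\princ_{(J:b)}$-closed. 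By Lemma \ref{lemma:div} this means $J=(K:_B(K:_B J))$ with $K=(J:b)$; but $bK\subseteq J$ gives $b\in(J:K)$... I would instead argue directly: $J$ is $\princ_J$-closed, and by Proposition \ref{prop:prop}\ref{prop:prop:colon} so is $(J:b)$ whenever it lies in $\mathcal{G}$, which is precisely $(J:b)\ordine J$ after unwinding Lemma \ref{lemma:div}. Wait — Proposition \ref{prop:prop}\ref{prop:prop:colon} shows closed ideals have closed conductors, giving $(J:b)$ is $\princ_J$-closed, i.e.\ $J\ordine(J:b)$, which is the wrong direction. The correct route: $I\ordine$-minor means $J$ is $\princ_I$-closed; I need $(J:b)\in\mathcal{D}$, and since $\mathcal{D}$ is $\ordine$-downward-closed it suffices that $(J:b)\ordine J$. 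By Lemma \ref{lemma:div}, $J$ being $\princ_{(J:b)}$-closed means $J=((J:b):_B((J:b):_B J))$; since $b\cdot(J:b)\subseteq J$ we get $b\in((J:b):_B J)$, hence $((J:b):_B((J:b):_B J))\subseteq((J:b):_B\langle b\rangle_{?})$... this requires care but is exactly the ``conductor of a closed module is closed'' phenomenon applied on the $\mathcal{G}_2=\inssubmod_A(B)$ side where Lemma \ref{lemma:div} is valid. So the crux is this order-computation showing $(J:b)\ordine J$ for all $J\in\mathcal{G}$, $b\in B$, which I expect to be the genuinely delicate step; everything else is bookkeeping transplanted from Proposition \ref{prop:infsup}.
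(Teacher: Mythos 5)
Your forward direction is fine and matches the paper's. In the converse you define $J^\star=\bigcap\{I\in\mathcal{D}\mid J\subseteq I\}$, check it lands in $\mathcal{G}$ via upward closure, show $\mathcal{G}^\star=\mathcal{D}$ using the intersection hypothesis, and reduce $(\dagger)$ to showing that each $(L:b)$, for $L\in\mathcal{D}$ with $J\subseteq L$, belongs to $\mathcal{D}$ --- all exactly as in the paper's argument. But the key claim $(L:b)\ordine L$ you never actually establish; you conjecture it is ``genuinely delicate'' and stop. That is a real gap: without it the proof of the converse is incomplete.

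What tripped you up is an infelicity in the text. The definition of $\ordine$ offers two supposedly equivalent conditions, $\princ_I\geq\princ_J$ and ``$J$ is $\princ_I$-closed,'' and you followed the second; but they are not equivalent as written (in Example \ref{ex:dvr-sp}, $A\ordine\pi A$ while $\pi A$ is not $\princ_A$-closed, and under that reading Proposition \ref{prop:ordine}\ref{prop:ordine:downward} would force every multiplicative operation to be the identity). The internally consistent reading --- the one Proposition \ref{prop:ordine}\ref{prop:ordine:princ} actually records --- is $I\ordine J$ if and only if $I\in\mathcal{G}^{\princ_J}$, i.e., $I$ is $\princ_J$-closed. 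With that in hand the missing step collapses: $(L:b)\ordine L$ says precisely that $(L:b)$ is $\princ_L$-closed, which is Proposition \ref{prop:prop}\ref{prop:prop:colon} applied with $\star=\princ_L$, since $L$ is trivially $\princ_L$-closed and $(L:b)\in\mathcal{G}$ by upward closure from $(J:b)\subseteq(L:b)$. Your alternative attempt to verify $J=\bigl((J:b):_B((J:b):_BJ)\bigr)$ via Lemma \ref{lemma:div} cannot succeed: that identity encodes $J\ordine(J:b)$, which is the wrong inequality and in general false (take $J=\pi^nA$ and $b=\pi$ in a DVR).
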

\begin{proof}
Let $I$ be the intersection of a family of $\star$-closed ideals. If $I\in\mathcal{G}$, then $I$ must be $\star$-closed; hence, if $\mathcal{D}=\mathcal{G}^\star$ and $I\in\mathcal{G}$ then $I\in\mathcal{D}$.

Conversely, suppose the condition holds, and define, for every $J\in\mathcal{G}$,
\begin{equation*}
J^\star:=\bigcap\{I\in\mathcal{D}\mid J\subseteq I\}.
\end{equation*}
Since $\mathcal{G}$ is upward closed, the map $J\mapsto J^\star$ goes from $\mathcal{G}$ to $\mathcal{G}$ and is a closure operation. To show that it is multiplicative, let $J\in\mathcal{G}$ and $b\in B$ be such that $(J:b)\in\mathcal{G}$. Then,
\begin{equation*}
(J^\star:b)=\left(\bigcap_{\substack{L\in\mathcal{D}\\ J\subseteq L}}L:b\right)=\bigcap_{\substack{L\in\mathcal{D}\\ J\subseteq L}}(L:b).
\end{equation*}
For all these $L$, we have $(J:b)\subseteq(L:b)$, and thus $(L:b)\in\mathcal{G}$; since $(L:b)\ordine L$ and $\mathcal{D}$ is a downset with respect to $\ordine$, we have $(L:b)\in\mathcal{D}$. Hence, $(J:b)^\star\subseteq(L:b)$ for every such $L$, and thus $(J:b)^\star$ is contained in the intersection. Therefore, $(J:b)^\star\subseteq(J^\star:b)$, and $\star$ is multiplicative. It follows that $\mathcal{G}^\star=\mathcal{D}$.
\end{proof}

To conclude this section, we show how to use Lemma \ref{lemma:div} and the multiplicative order to describe some sets of semiprime operations.
\begin{ex}\label{ex:dvr-sp}
Let $A=B$ be a discrete valuation ring with uniformizer $\pi$. Let $\mathcal{G}:=\insid(V)^\bullet$ be the set of nonzero ideals of $A$. Both $\mathcal{G}$ and $\insid(V)$ are upward closed. By Lemma \ref{lemma:div}, we have
\begin{equation*}
(\pi^mA)^{\princ_{\pi^nA}}=(\pi^nA:(\pi^nA:\pi^mA))=\begin{cases}
(\pi^nA:A)=\pi^nA & \text{if~}m\geq n,\\
(\pi^nA:\pi^{n-m}A)=\pi^mA & \text{if~}m\leq n.
\end{cases}
\end{equation*}
In particular, $\pi^mA$ is $\princ_{\pi^nA}$-closed if and only if $m\leq n$, and thus every equivalence class with respect to $\ordine$ is a singleton (i.e., we can consider $[\mathcal{G}]=\mathcal{G}$). Moreover, $\mathcal{G}$ is a chain under $\ordine=\ordine_{(A,A,\mathcal{G})}$:
\begin{equation*}
A\ordinestretto \pi A\ordinestretto\pi^2A\ordinestretto\cdots\ordinestretto\pi^nA\ordinestretto\cdots.
\end{equation*}
The downsets of $\mathcal{G}$ are thus $\mathcal{G}$ itself (which corresponds to the identity map) and the sets $\{\pi^nA\}^\downarrow=\{A,\pi A,\ldots,\pi^nA\}$, for $n\inN$ (with $\{\pi^nA\}^\downarrow$ corresponding to the principal operation $\princ_{\pi^nA}$).

Considering also $(0)$, we see that in $(\insid(V),\ordine)$ the zero ideal is comparable only with $A$ (for which $A\ordinestretto(0)$). If each $\pi^nA$ is closed, then also $(0)$ is closed; hence, the identity map extends only to the identity map on $\insid(V)$. On the other hand, $\princ_{\pi^nA}$ extends to two different multiplicative operations, namely $\princ_{\pi^nA}\wedge\princ_{(0)}=\inf\{\princ_{\pi^nA},\princ_{(0)}\}$ and $\princ_{\pi^nA}$, according to whether $(0)$ is closed or not. In this way, we obtain exactly the semiprime operations described in \cite{vassilev_structure_2009}.

Note that $(\mathcal{G},\ordine)$ has no maximal elements, while $(\insid(V),\ordine)$ has a unique maximal element (the zero ideal) that is not a maximum. In particular, $(A,A,\mathcal{G})$ and $(A,A,\insid(V))$ do not have a canonical ideal.

\end{ex}

\begin{ex}\label{ex:dimV1}
Suppose that $V$ is a non-Noetherian one-dimensional valuation domain with valuation $\val$ and value group $\Gamma\subseteq\insR$; let $\mathcal{G}:=\insid(V)^\bullet$ and let $\Delta:=\insmult(V,V,\mathcal{G})$.

There are two classes of nonzero ideals:
\begin{itemize}
\item $P(\delta):=\{x\in V\mid \val(x)\geq\delta\}$, for $\delta\in\insR^{\geq 0}$;
\item $J(\delta):=\{x\in V\mid \val(x)>\delta\}$, for $\delta\in\insR^{\geq 0}$.
\end{itemize}
In particular, $V=P(0)$, while $J(0)$ is the maximal ideal of $V$. If $\delta\in\insR^{\geq 0}\setminus\Gamma^{\geq 0}$, then $J(\delta)=P(\delta)$.

Let $\mathcal{P}$ be the set of all $P(\delta)$, and let $\mathcal{J}$ be the set of the $J(\delta)$ with $\delta\in\Gamma^{\geq 0}$. Then, $(\mathcal{P},\mathcal{J})$ is a partition of $\mathcal{G}$. To study $\ordine=\ordine_{(A,A,\mathcal{G})}$, we start with studying it on $\mathcal{P}$ and $\mathcal{J}$.

By direct calculation, we have
\begin{equation*}
(P(\alpha):_VP(\beta))=(P(\alpha):_VJ(\beta))=\begin{cases}
V & \text{if~}\alpha<\beta\\
P(\alpha-\beta) & \text{if~}\alpha\geq\beta;
\end{cases}
\end{equation*}
doing it again, we see that $P(\alpha)\ordine P(\beta)$ if and only if $\alpha\leq\beta$, and thus $(\mathcal{P},\ordine)$ is order-isomorphic to $\insR^{\geq 0}$. Likewise,
\begin{equation*}
(J(\alpha):_VP(\beta))=\begin{cases}
V & \text{if~}\alpha<\beta\\
J(\alpha-\beta) & \text{if~}\alpha\geq\beta
\end{cases}
\end{equation*}
and
\begin{equation*}
(J(\alpha):_VJ(\beta))=\begin{cases}
V & \text{if~}\alpha<\beta\\
P(\alpha-\beta) & \text{if~}\alpha\geq\beta
\end{cases}
\end{equation*}
so $J(\alpha)\ordine J(\beta)$ if and only if $\alpha\leq\beta$, and $(\mathcal{J},\ordine)$ is  order-isomorphic to $\Gamma^{\geq 0}$. Furthermore, we see that $P(\alpha)\ordine J(\beta)$ if and only if $\alpha\leq\beta$, and $J(\alpha)\ordine P(\beta)$ if and only if $\alpha\leq\beta$. In particular, as in the discrete case, we can consider $[\mathcal{G}]=\mathcal{G}$.

Let now $\mathcal{D}:=\mathcal{G}^\star$ for some $\star\in\insmult(A,A,\mathcal{G})$: then, $\mathcal{D}$ is a downset of $\mathcal{G}$. Thus, $\mathcal{D}_1:=\mathcal{D}\cap\mathcal{P}$ is a downset of $\mathcal{P}$ (and is nonempty since $V\in\mathcal{D}_1$), while $\mathcal{D}_2:=\mathcal{D}\cap\mathcal{J}$ is either a downset of $\mathcal{J}$ or the empty set. Hence, they are intervals in the form $[0,t)$ or $[0,t]$. Set $\rho:=\sup\{\beta\mid P(\beta)\in\mathcal{D}_1\}$ and
\begin{equation*}
\gamma:=\begin{cases}
\sup\{\alpha\mid J(\alpha)\in\mathcal{D}_2\} & \text{if~}\mathcal{D}_2\neq\emptyset\\
-\infty & \text{if~}\mathcal{D}_2=\emptyset.
\end{cases}
\end{equation*}
We analyze separately $\rho$ and $\gamma$.

If $\rho<\infty$, then $\bigcap_{\alpha\in[0,\rho)}P(\alpha)=P(\rho)$, and thus $P(\rho)$ is $\star$-closed, i.e., $P(\rho)\in\mathcal{D}$. Thus, $\star\leq\princ_{P(\rho)}$. On the other hand, if $\rho=\infty$ then $\star\leq v_P$, where $v_P:=\inf\{\princ_{P(\beta)}\mid \beta\inR^{\geq 0}\}$ is the multiplicative operation closing exactly all the $P(\alpha)$. (More precisely, $v_P$ is the divisorial closure on $V$.)

If $\gamma=-\infty$ we have nothing to say. Suppose $\gamma\neq-\infty$, and let $d(\gamma):=\inf\{\princ_{J(\beta)}\mid \beta<\gamma\}$. Then, $\star\leq \princ_{P(\rho)}\wedge d(\gamma)$; furthermore, once we know $\rho$ and $\gamma$, the only ideal of which we can't say if it is closed or not is $J(\gamma)$.

If $J(\gamma)\notin\mathcal{D}_2$, then $\star=\princ_{P(\rho)}\wedge d(\gamma)$. If $J(\gamma)\in\mathcal{D}_2$, then $\star\leq \princ_{P(\rho)}\wedge d(\gamma)\wedge \princ_{J(\gamma)}=\princ_{P(\rho)}\wedge\princ_{J(\gamma)}$; in this case, moreover, $\gamma\notin\Gamma^{\geq 0}$ since otherwise $\princ_{P(\rho)}\wedge\princ_{J(\gamma)}=\princ_{P(\rho)}\wedge\princ_{P(\gamma)}=\princ_{P(\sup\{\rho,\gamma\})}$ would not close any element of $\mathcal{J}$, a contradiction.

Therefore, we have the following four possibilities:
\begin{itemize}
\item if $\rho<\infty$ and $\gamma=-\infty$ then $\star=\princ_{P(\rho)}$;
\item if $\rho=\infty$ and $\gamma=-\infty$ then $\star=v_P$;
\item if $\rho<\infty$ and $J(\gamma)\in\mathcal{D}_2$ then $\star=\princ_{P(\rho)}\wedge\princ_{J(\gamma)}$;
\item if $\rho=\infty$ and $J(\gamma)\notin\mathcal{D}_2$ (or $\gamma=\infty$) then $\star=\princ_{P(\rho)}\wedge d(\gamma)$.
\end{itemize}

If we now consider the set $\Delta':=\insmult(V,V,\insid(V))$ of all the semiprime operations, we see that $\princ_{(0)}$ closes only $(0)$ and $V$, and that no other $\princ_{J(\alpha)}$ or $\princ_{P(\beta)}$ close $(0)$. Hence, if $\rho=\infty$ or $\gamma=\infty$ then $\star$ extends uniquely to $\insid(V)$, while if $\rho<\infty$ and $\gamma<\infty$ we have exactly two extensions, namely $\star$ and $\star\wedge\princ_{(0)}$.

As in the Noetherian case, $(\mathcal{G},\ordine)$ has no maximal elements, $(\insid(V),\ordine)$ has a unique maximal element (the zero ideal) that is not a maximum, and neither $(A,A,\mathcal{G})$ nor $(A,A,\insid(V))$ have a canonical ideal.
\end{ex}

\section{Functoriality}\label{sect:functoriality}
In this section, we study how multiplicative operations behave under ring homomorphisms. We need the following definitions.
\begin{defin}
Let $A\subseteq B$ and $A'\subseteq B'$ be two ring extensions, and let $\phi:B\longrightarrow B'$ be a ring homomorphism. We say that $\phi$ is:
\begin{itemize}
\item a \emph{homomorphism of extensions} if $\phi(A)\subseteq A'$;
\item an \emph{isomorphism of extensions} if $\phi$ is an isomorphism and $\phi(A)=A'$;
\item a \emph{quotient of extensions} if $\phi$ is surjective and $\phi^{-1}(A')=A$.
\end{itemize}
\end{defin}

\begin{oss}
~\begin{enumerate}[(1)]
\item If $A\subseteq B\subseteq C$ are ring extensions, then the inclusion $\phi:B\longrightarrow C$ is a homomorphism between the extensions $A\subseteq B$ and $A\subseteq C$. Furthermore, the identity $\phi:C\longrightarrow C$ is a homomorphism between the extensions $A\subseteq C$ and $B\subseteq C$.
\item The fact that $\phi:B\longrightarrow B'$ is a homomorphisms of extensions can also be expressed by saying that the diagram
\begin{equation*}
\begin{tikzcd}
A\arrow{d}{\phi}\arrow[hook]{r} & B\arrow{d}{\phi}\\
A'\arrow[hook]{r} & B'
\end{tikzcd}
\end{equation*}
is commutative. Under this point of view, $\phi$ is a quotient if and only if $\phi$ is surjective and $A$ is the pullback of $A'$ under $\phi$.
\end{enumerate}
\end{oss}

The first step is going from multiplicative operations on $(A',B',\mathcal{G}')$ to operations on $(A,B,\mathcal{G})$.
\begin{prop}\label{prop:pullback}
Let $A\subseteq B$ and $A'\subseteq B'$ be ring extensions and let $\phi:B\longrightarrow B'$ be a homomorphism of extensions. Let $\mathcal{G}'\subseteq\inssubmod_{A'}(B')$ be upward closed in $\inssubmod_{A'}(B')$, and let $\mathcal{G}:=\phi^{-1}(\mathcal{G}')$. Then, the following hold.
\begin{enumerate}[(a)]
\item\label{prop:pullback:def} For every $\star\in\insmult(A',B',\mathcal{G}')$, the map
\begin{equation*}
\begin{aligned}
\star_\phi\colon\mathcal{G} & \longrightarrow\mathcal{G}\\
I & \longmapsto \phi^{-1}((\phi(I)A')^\star)
\end{aligned}
\end{equation*}
is a multiplicative operation on $(A,B,\mathcal{G})$.
\item\label{prop:pullback:kernel} If $L\subseteq\ker\phi$, then $L^{\star_\phi}=(\ker\phi)^{\star_\phi}=\phi^{-1}((0)^\star)$.
\item\label{prop:pullback:map} The map
\begin{equation*}
\begin{aligned}
\Psi\colon\insmult(A',B',\mathcal{G}') & \longrightarrow\insmult(A,B,\mathcal{G})\\
\star & \longmapsto \star_\phi
\end{aligned}
\end{equation*}
is well-defined and order-preserving.
\end{enumerate}
\end{prop}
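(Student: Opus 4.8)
The plan is to prove part \ref{prop:pullback:def} in full — this is where all the content lies — and then deduce \ref{prop:pullback:kernel} and \ref{prop:pullback:map} in a few lines. The whole argument rests on three elementary compatibility facts about $\phi$, which I would record first. For $I\in\inssubmod_A(B)$, $b\in B$ and an $A'$-submodule $K'$ of $B'$:
\[
\phi((I:_Bb))A'\subseteq(\phi(I)A':_{B'}\phi(b)),\qquad \phi^{-1}\big((K':_{B'}\phi(b))\big)=\big(\phi^{-1}(K'):_Bb\big),\qquad \phi(\phi^{-1}(K'))A'\subseteq K'.
\]
The first holds because $xb\in I$ forces $\phi(x)\phi(b)=\phi(xb)\in\phi(I)\subseteq\phi(I)A'$, and the target is an $A'$-module; the second because $x$ lies in either side exactly when $\phi(xb)=\phi(x)\phi(b)\in K'$; the third is immediate. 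Recall also that $I\in\mathcal{G}=\phi^{-1}(\mathcal{G}')$ means precisely $\phi(I)A'\in\mathcal{G}'$.

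With these in hand I would first verify that $\star_\phi$ is a well-defined closure operation on $\mathcal{G}$. Extensivity is clear, since $\phi(I)\subseteq(\phi(I)A')^\star$ gives $I\subseteq\phi^{-1}((\phi(I)A')^\star)=I^{\star_\phi}$, and monotonicity is immediate from monotonicity of $\phi(-)A'$, of $\star$, and of $\phi^{-1}$. For the fact that $I^{\star_\phi}\in\mathcal{G}$ and for idempotency I would use the sandwich
\[
\phi(I)A'\ \subseteq\ \phi(I^{\star_\phi})A'\ \subseteq\ (\phi(I)A')^\star,
\]
where the left inclusion comes from $I\subseteq I^{\star_\phi}$ and the right one from the third compatibility fact applied to $K'=(\phi(I)A')^\star$. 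Since $\phi(I)A'\in\mathcal{G}'$ and $\mathcal{G}'$ is upward closed, the left inclusion already forces $\phi(I^{\star_\phi})A'\in\mathcal{G}'$, i.e. $I^{\star_\phi}\in\mathcal{G}$; applying $\star$ to the sandwich yields $(\phi(I^{\star_\phi})A')^\star=(\phi(I)A')^\star$, hence $(I^{\star_\phi})^{\star_\phi}=I^{\star_\phi}$.

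The heart of \ref{prop:pullback:def} is condition $(\dagger)$. Fix $I\in\mathcal{G}$ and $b\in B$ with $(I:_Bb)\in\mathcal{G}$, and set $J':=\phi(I)A'$. By hypothesis $\phi((I:_Bb))A'\in\mathcal{G}'$, and the first compatibility fact places it inside $(J':_{B'}\phi(b))$; upward-closedness of $\mathcal{G}'$ then gives $(J':_{B'}\phi(b))\in\mathcal{G}'$, which is exactly what is needed to invoke $(\dagger)$ for $\star$. Chaining,
\[
(I:_Bb)^{\star_\phi}=\phi^{-1}\big((\phi((I:_Bb))A')^\star\big)\subseteq\phi^{-1}\big((J':_{B'}\phi(b))^\star\big)\subseteq\phi^{-1}\big((J'^\star:_{B'}\phi(b))\big)=(I^{\star_\phi}:_Bb),
\]
where the first containment uses monotonicity of $\star$, the second uses $(\dagger)$ for $\star$, and the final equality is the second compatibility fact together with $\phi^{-1}(J'^\star)=I^{\star_\phi}$. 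Hence $(\dagger)$ holds for $\star_\phi$, proving \ref{prop:pullback:def}.

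Parts \ref{prop:pullback:kernel} and \ref{prop:pullback:map} are then short. For \ref{prop:pullback:kernel}: any $A$-submodule $L\subseteq\ker\phi$ has $\phi(L)=(0)$, so $\phi(L)A'=(0)$ and $L^{\star_\phi}=\phi^{-1}((0)^\star)$; the same computation with $L=\ker\phi$ gives the remaining equality (note $L\in\mathcal{G}$ already forces $(0)\in\mathcal{G}'$, so $(0)^\star$ and $\ker\phi\in\mathcal{G}$ make sense). For \ref{prop:pullback:map}: well-definedness is \ref{prop:pullback:def}, and if $\star_1\leq\star_2$ in $\insmult(A',B',\mathcal{G}')$ then $(\phi(I)A')^{\star_1}\subseteq(\phi(I)A')^{\star_2}$ for every $I\in\mathcal{G}$, so $I^{(\star_1)_\phi}\subseteq I^{(\star_2)_\phi}$ and $\Psi(\star_1)\leq\Psi(\star_2)$. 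The one point requiring genuine care — the ``main obstacle'' — is the repeated appeal to upward-closedness of $\mathcal{G}'$ to keep the conductor modules $\phi((I:_Bb))A'$ and $(J':_{B'}\phi(b))$ inside $\mathcal{G}'$; once that bookkeeping is arranged, everything else is formal manipulation of images and preimages.
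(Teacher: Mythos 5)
Your proof is correct and takes essentially the same route as the paper's: push forward via $\phi(\cdot)A'$, close with $\star$, pull back with $\phi^{-1}$, and use upward-closedness of $\mathcal{G}'$ to keep the conductor modules inside $\mathcal{G}'$. One small point where you are in fact more careful than the paper: for idempotence the paper writes the equality $\phi(\phi^{-1}((\phi(I)A')^\star))A' = (\phi(I)A')^\star$, which as stated needs $\phi$ to be surjective (only $\subseteq$ holds in general), whereas your sandwich $\phi(I)A'\subseteq\phi(I^{\star_\phi})A'\subseteq(\phi(I)A')^\star$ followed by applying $\star$ gives the conclusion without that assumption. Making explicit the reading $\mathcal{G}=\phi^{-1}(\mathcal{G}')=\{I:\phi(I)A'\in\mathcal{G}'\}$ is also a good choice, since that is what the proof actually needs.
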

\begin{proof}
We first note that $\star_\phi$ is well-defined since $(\phi(I)A')^\star\in\mathcal{G}'$ and thus its inverse image is in $\mathcal{G}$. It is also clear that $\star_\phi$ is extensive and order-preserving. To show that it is idempotent, let $I\in\mathcal{G}$. Then,
\begin{equation*}
(I^{\star_\phi})^{\star_\phi}=\phi^{-1}\left((\phi(I^{\star_\phi})A')^\star\right);
\end{equation*}
however,
\begin{equation*}
\phi(I^{\star_\phi})A'=\phi\left(\phi^{-1}(\phi(I)A')^\star\right)A'=((\phi(I)A')^\star)A'=(\phi(I)A')^\star
\end{equation*}
and thus
\begin{equation*}
(I^{\star_\phi})^{\star_\phi}=\phi^{-1}\left((\phi(I)A')^\star\right)=I^{\star_\phi}
\end{equation*}
and $\star_\phi$ is idempotent. Likewise, let $I\in\mathcal{G}$ and $b\in B$ be such that $(I:b)\in\mathcal{G}$. Then, $\phi(I:b)\in\mathcal{G}'$, and since $\mathcal{G}'$ is upward closed and $\phi(I:b)\subseteq(\phi(I):\phi(b))$ we have $(\phi(I):\phi(b))\in\mathcal{G}'$. Hence,
\begin{align*}
(I:b)^{\star_\phi} & =\phi^{-1}\left((\phi(I:b)A')^\star\right)\subseteq \\
& \subseteq\phi^{-1}\left(((\phi(I):\phi(b))A')^\star\right)\subseteq\\
& \subseteq\phi^{-1}\left((\phi(I)A':\phi(b))^\star\right)\subseteq\\
& \subseteq\phi^{-1}\left(((\phi(I)A')^\star:\phi(b))\right).
\end{align*}
Let $t\in(I:b)^{\star_\phi}$. Then, $\phi(t)\in((\phi(I)A')^\star:\phi(b))$, i.e., $\phi(t)\phi(b)=\phi(tb)\in(\phi(I)A')^\star$. Therefore, 
\begin{equation*}
tb\in\phi^{-1}\left((\phi(I)A')^\star\right)=I^{\star_\phi},
\end{equation*}
and so $t\in(I:b)^{\star_\phi}$. Thus, $(I:b)^{\star_\phi}\subseteq(I^{\star_\phi}:b)$ and $\star$ is multiplicative, as claimed.

The other two points follow directly from the definitions.
\end{proof}

\begin{prop}\label{prop:pushforward}
Let $A\subseteq B$ and $A'\subseteq B'$ be two ring extensions and let $\phi:B\longrightarrow B'$ be a quotient of extensions. Let $\mathcal{G}\subseteq\inssubmod_A(B)$ be upward closed in $\inssubmod_A(B)$, and let $\mathcal{G}':=\phi(\mathcal{G})$. Let also $\Psi$  be the map defined in Proposition \ref{prop:pullback}.  Then, the following hold.
\begin{enumerate}[(a)]
\item\label{prop:pushforward:def} For every $\star\in\insmult(A,B,\mathcal{G})$, the map
\begin{equation*}
\begin{aligned}
\star^\phi\colon\mathcal{G}' & \longrightarrow\mathcal{G}'\\
I & \longmapsto \phi(\phi^{-1}(I)^\star)
\end{aligned}
\end{equation*}
is a multiplicative operation on $(A',B',\mathcal{G}')$.
\item\label{prop:pushforward:map} The map
\begin{equation*}
\begin{aligned}
\Phi\colon\insmult(A,B,\mathcal{G}) & \longrightarrow\insmult(A',B',\mathcal{G}')\\
\star & \longmapsto \star^\phi
\end{aligned}
\end{equation*}
is well-defined and order-preserving.
\item\label{prop:pushforward:id'} $\Phi\circ\Psi$ is the identity on $\insmult(A',B',\mathcal{G}')$; that is, $(\star_\phi)^\phi=\star$ for every $\star\in\insmult(A',B',\mathcal{G}')$.
\item\label{prop:pushforward:id} If $\ker\phi\subseteq L$ for every $L\in\mathcal{G}$, then $\Psi\circ\Phi$ is the identity on $\insmult(A,B,\mathcal{G})$; that is, $(\sharp^\phi)_\phi=\sharp$ for every $\sharp\in\insmult(A,B,\mathcal{G})$.
\end{enumerate}
\end{prop}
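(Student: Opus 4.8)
The plan is to prove all four items by direct manipulation of $\phi$ and $\phi^{-1}$, using the following elementary facts, which I would record at the outset: since $\phi$ is surjective, $\phi(\phi^{-1}(X))=X$ for every $A'$-submodule $X$ of $B'$, and for every $A$-submodule $N$ of $B$ one has $\phi^{-1}(\phi(N))=N+\ker\phi$; since $\phi$ is a quotient of extensions, $\ker\phi\subseteq A$ and $\phi(A)=A'$ (so the image under $\phi$ of an $A$-submodule of $B$ is automatically an $A'$-submodule of $B'$); and for $I\in\inssubmod_{A'}(B')$ and $b\in B$, $\phi^{-1}((I:_{B'}\phi(b)))=(\phi^{-1}(I):_B b)$.

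For \ref{prop:pushforward:def}, I would first check that $\star^\phi$ maps $\mathcal{G}'$ to $\mathcal{G}'$: writing $I=\phi(J)$ with $J\in\mathcal{G}$, we get $\phi^{-1}(I)=J+\ker\phi\in\mathcal{G}$ because $\mathcal{G}$ is upward closed, so $\phi^{-1}(I)^\star\in\mathcal{G}$ and $\phi(\phi^{-1}(I)^\star)\in\phi(\mathcal{G})=\mathcal{G}'$. Extensivity and monotonicity are immediate from those of $\star$ together with $\phi(\phi^{-1}(I))=I$. For idempotence the point is that $\ker\phi\subseteq\phi^{-1}(I)\subseteq\phi^{-1}(I)^\star$, hence $\phi^{-1}(I^{\star^\phi})=\phi^{-1}(I)^\star+\ker\phi=\phi^{-1}(I)^\star$, so $(I^{\star^\phi})^{\star^\phi}=\phi((\phi^{-1}(I)^\star)^\star)=\phi(\phi^{-1}(I)^\star)=I^{\star^\phi}$. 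For condition $(\dagger)$, given $I,(I:b')\in\mathcal{G}'$ write $b'=\phi(b)$ with $b\in B$; then $(I:b')\in\mathcal{G}'$ forces $(\phi^{-1}(I):b)=\phi^{-1}((I:b'))\in\mathcal{G}$, so $(\dagger)$ for $\star$ gives $(\phi^{-1}(I):b)^\star\subseteq(\phi^{-1}(I)^\star:b)$, and applying $\phi$ (using $\phi(xb)=\phi(x)b'$) yields $(I:b')^{\star^\phi}\subseteq(I^{\star^\phi}:b')$. Part \ref{prop:pushforward:map} is then routine: well-definedness is \ref{prop:pushforward:def}, and $\star_1\leq\star_2$ gives $\phi^{-1}(I)^{\star_1}\subseteq\phi^{-1}(I)^{\star_2}$ for all $I\in\mathcal{G}'$, hence $\star_1^\phi\leq\star_2^\phi$.

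For \ref{prop:pushforward:id'}, I would note that $\mathcal{G}'=\phi(\mathcal{G})$ is upward closed in $\inssubmod_{A'}(B')$ and $\phi^{-1}(\mathcal{G}')$ is upward closed in $\inssubmod_A(B)$ (both checked directly from surjectivity of $\phi$ and the hypothesis on $\mathcal{G}$), so Proposition \ref{prop:pullback} applies, $\Psi$ lands in $\insmult(A,B,\phi^{-1}(\mathcal{G}'))$, and \ref{prop:pushforward:def} applied with $\phi^{-1}(\mathcal{G}')$ in place of $\mathcal{G}$ makes $(\star_\phi)^\phi$ a multiplicative operation on $(A',B',\mathcal{G}')$. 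Then, for $I\in\mathcal{G}'$,
\begin{equation*}
(\star_\phi)^\phi(I)=\phi\bigl(\phi^{-1}(I)^{\star_\phi}\bigr)=\phi\bigl(\phi^{-1}((\phi(\phi^{-1}(I))A')^\star)\bigr)=\phi\bigl(\phi^{-1}(I^\star)\bigr)=I^\star,
\end{equation*}
using $\phi\circ\phi^{-1}=\mathrm{id}$ and $IA'=I$ (valid since $I\in\mathcal{G}'\subseteq\inssubmod_{A'}(B')$). For \ref{prop:pushforward:id}, the extra hypothesis $\ker\phi\subseteq L$ for all $L\in\mathcal{G}$ gives $\phi^{-1}(\phi(L))=L+\ker\phi=L$, hence $\phi^{-1}(\mathcal{G}')=\mathcal{G}$, so $\Psi$ really maps into $\insmult(A,B,\mathcal{G})$ and $(\sharp^\phi)_\phi$ makes sense; then, for $I\in\mathcal{G}$, using $\phi(A)=A'$ (so $\phi(I)A'=\phi(I)$), $\phi^{-1}(\phi(I))=I$, and $\ker\phi\subseteq I\subseteq I^\sharp$ (so $\phi^{-1}(\phi(I^\sharp))=I^\sharp$),
\begin{equation*}
(\sharp^\phi)_\phi(I)=\phi^{-1}\bigl((\phi(I)A')^{\sharp^\phi}\bigr)=\phi^{-1}\bigl(\phi(\phi^{-1}(\phi(I))^\sharp)\bigr)=\phi^{-1}\bigl(\phi(I^\sharp)\bigr)=I^\sharp.
\end{equation*}

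No step here is deep; the part that will require the most care is keeping track of which hypothesis is used where — that $\mathcal{G}$ being upward closed is exactly what keeps $\star^\phi$ valued in $\mathcal{G}'$, that idempotence of $\star^\phi$ rests on $\ker\phi$ already being contained in the closure $\phi^{-1}(I)^\star$, and that \ref{prop:pushforward:id} genuinely fails without $\ker\phi\subseteq L$ (since otherwise $\phi^{-1}(\phi(L))\supsetneq L$ and $\Psi\circ\Phi$ strictly enlarges closures near $\ker\phi$, already for $B=B'$, $\mathcal{G}=\inssubmod_A(B)$ and $\ker\phi\neq(0)$).
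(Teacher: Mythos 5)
Your proof is correct and follows essentially the same route as the paper's: direct manipulation of $\phi$ and $\phi^{-1}$, using surjectivity, $\ker\phi\subseteq A$, and upward closedness of $\mathcal G$. The paper's argument is a touch terser (it uses the containment $\phi^{-1}(I:b')\subseteq(\phi^{-1}(I):a)$ where you note the sharper equality, and it does not dwell on whether $\Psi$ lands in $\insmult(A,B,\mathcal G)$ before composing in part (c)); your extra care on those two points is sound and consistent with what the paper is implicitly doing.
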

\begin{proof}
Since $\phi^{-1}(A')=A$, each $\phi(I)$ is an $A'$-module, and thus $\star^\phi$ is well-defined. It is also clearly extensive and order-preserving. To show idempotence, we calculate
\begin{equation*}
(I^{\star^\phi})^{\star^\phi}=\phi(\phi^{-1}(I)^{\star_\phi})=\phi(\phi^{-1}(\phi(\phi^{-1}(I)^\star)))=\phi(\phi^{-1}(I)^\star)=I^{\star^\phi}.
\end{equation*}
To show that $\star^\phi$ is multiplicative, let $I\in\mathcal{G}'$ and let $b\in B'$ such that $(I:b)\in\mathcal{G}'$. Let $b=\phi(a)$ for some $a\in B$. Since $\phi$ is surjective and $\mathcal{G}$ is upward closed, $\phi^{-1}(I:b)\subseteq(\phi^{-1}(I):a)$ are both in $\mathcal{G}$. Hence, we have
\begin{equation*}
(I:b)^{\star^\phi}=\phi(\phi^{-1}(I:b)^\star)\subseteq\phi((\phi^{-1}(I):a)^\star)\subseteq\phi(\phi^{-1}(I)^\star:a).
\end{equation*}
If $t=\phi(s)\in(I:b)^{\star^\phi}$, then $sa\in\phi^{-1}(I)^\star$, and thus $tb=\phi(sa)\in I^{\star^\phi}$. Hence, $t\in(I^{\star^\phi}:b)$ and so $(I:b)^{\star^\phi}\subseteq(I^{\star^\phi}:b)$.

The second point follows directly from the definitions; for \ref{prop:pushforward:id'}, let $I\in\mathcal{G}'$ and let $L:=\phi^{-1}(I)$. Then, for every $\star\in\insmult(A',B',\mathcal{G}')$,
\begin{equation*}
I^{(\star_\phi)^\phi}=\phi(L^{\star_\phi})=\phi(\phi^{-1}(\phi(L)^\star))=\phi(L)^\star=I^\star.
\end{equation*}
To show \ref{prop:pushforward:id}, let $J\in\mathcal{G}$ and $L:=\phi(J)$. Take $\sharp\in\insmult(A,B,\mathcal{G})$. Then,
\begin{equation*}
J^{(\sharp^\phi)_\phi}=\phi^{-1}(L^{\sharp^\phi})=\phi^{-1}(\phi(\phi^{-1}(L)^\sharp))=\phi^{-1}(L)^\sharp=J^\sharp,
\end{equation*}
with the last equality coming from the fact that $\phi^{-1}(\phi(J))=J$ as $J$ contains the kernel of $\phi$. The claim is proved.
\end{proof}

\begin{cor}\label{cor:quoz}
Let $\phi:B\longrightarrow B'$ be a quotient between the extensions $A\subseteq B$ and $A'\subseteq B'$. Let $\mathcal{G}_0:=\{L\in\inssubmod_A(B)\mid \ker\phi\subseteq L\}$. For every $\mathcal{G}\subseteq\mathcal{G}_0$, we have $\insmult(A,B,\mathcal{G})\simeq\insmult(A',B',\phi(\mathcal{G}))$.
\end{cor}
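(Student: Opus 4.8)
The plan is to exhibit the isomorphism as conjugation by the order isomorphism of posets $\Theta\colon\mathcal{G}\longrightarrow\phi(\mathcal{G})$ induced by $\phi$. Note that Propositions \ref{prop:pullback} and \ref{prop:pushforward} do not apply verbatim here, since $\mathcal{G}$ (hence $\phi(\mathcal{G})$) need not be upward closed; but the hypothesis $\mathcal{G}\subseteq\mathcal{G}_0$ forces all the relevant colon modules back into $\mathcal{G}_0$, and that is precisely what is needed.

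\emph{Step 1 (module correspondence).} Since $\phi$ is surjective and $\phi^{-1}(A')=A$, for $L\in\mathcal{G}_0$ the image $\phi(L)$ is an $A'$-submodule of $B'$ (each preimage of an element of $A'$ lies in $A$), and $\phi^{-1}(\phi(L))=L$ because $\ker\phi\subseteq L$; conversely $\phi^{-1}(M)\in\mathcal{G}_0$ and $\phi(\phi^{-1}(M))=M$ for every $M\in\inssubmod_{A'}(B')$. Thus $L\mapsto\phi(L)$ is an order isomorphism $\mathcal{G}_0\longrightarrow\inssubmod_{A'}(B')$ with inverse $M\mapsto\phi^{-1}(M)$, and it restricts to an order isomorphism $\Theta\colon\mathcal{G}\longrightarrow\phi(\mathcal{G})$ (using $\phi^{-1}(\phi(\mathcal{G}))=\mathcal{G}$, valid as $\mathcal{G}\subseteq\mathcal{G}_0$).

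\emph{Step 2 (colon identity).} For $I\in\mathcal{G}_0$ and $b\in B$ we have $\ker\phi\subseteq(I:_Bb)$ (as $\ker\phi\cdot b\subseteq\ker\phi\subseteq I$) and $\phi(I:_Bb)=(\phi(I):_{B'}\phi(b))$: the inclusion $\subseteq$ is clear, and if $x'=\phi(x)$ satisfies $x'\phi(b)\in\phi(I)$ then $xb\in\phi^{-1}(\phi(I))=I$, whence $x'\in\phi(I:_Bb)$. Since $\phi$ is surjective, every $b'\in B'$ is of the form $\phi(b)$, so for $I\in\mathcal{G}$ this gives $(I:_Bb)\in\mathcal{G}$ if and only if $(\Theta(I):_{B'}\phi(b))\in\phi(\mathcal{G})$, and then $\Theta(I:_Bb)=(\Theta(I):_{B'}\phi(b))$.

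\emph{Step 3 (transport of $(\dagger)$).} Conjugation by $\Theta$ turns a closure operation $\star$ on $\mathcal{G}$ into the closure operation $M\mapsto\phi(\phi^{-1}(M)^\star)$ on $\phi(\mathcal{G})$, and conjugation by $\Theta^{-1}$ provides the mutually inverse, order-preserving return map; so it remains only to see that each preserves $(\dagger)$. If $\star\in\insmult(A,B,\mathcal{G})$ and $M=\phi(I)\in\phi(\mathcal{G})$, $b'=\phi(b)\in B'$ with $(M:_{B'}b')\in\phi(\mathcal{G})$, then $(I:_Bb)\in\mathcal{G}$ by Step 2 and
\begin{equation*}
\phi(\phi^{-1}(M:_{B'}b')^\star)=\phi((I:_Bb)^\star)\subseteq\phi((I^\star:_Bb))\subseteq(\phi(I^\star):_{B'}\phi(b))=(\phi(\phi^{-1}(M)^\star):_{B'}b'),
\end{equation*}
using $(\dagger)$ for $\star$ in the middle. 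Conversely, if $\star'\in\insmult(A',B',\phi(\mathcal{G}))$ and $I\in\mathcal{G}$, $b\in B$ with $(I:_Bb)\in\mathcal{G}$, then $(\phi(I):_{B'}\phi(b))=\phi(I:_Bb)\in\phi(\mathcal{G})$ by Step 2, and combining $(\dagger)$ for $\star'$ with the elementary inclusion $\phi^{-1}((N:_{B'}\phi(b)))\subseteq(\phi^{-1}(N):_Bb)$ (for any $A'$-submodule $N$) shows that $I\mapsto\phi^{-1}(\phi(I)^{\star'})$ also satisfies $(\dagger)$. Hence conjugation by $\Theta$ is an order isomorphism $\insmult(A,B,\mathcal{G})\simeq\insmult(A',B',\phi(\mathcal{G}))$. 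The one point requiring care is the bookkeeping of Step 2 — which colon modules lie in $\mathcal{G}$ — and this is exactly where the hypothesis $\mathcal{G}\subseteq\mathcal{G}_0$ (in place of upward-closedness) is used; everything else is formal.
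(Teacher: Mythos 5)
Your proof is correct, and it uses the same underlying maps ($\star\mapsto\star^\phi$ and $\star\mapsto\star_\phi$) that Propositions \ref{prop:pullback} and \ref{prop:pushforward} set up, so the route is essentially the paper's; but you flag, correctly, a real mismatch between the hypotheses. Those two propositions require $\mathcal{G}$ (respectively $\mathcal{G}'$) to be upward closed in $\inssubmod_A(B)$ (respectively $\inssubmod_{A'}(B')$): in the proof of Proposition \ref{prop:pushforward} upward closedness is invoked precisely to conclude that $\phi^{-1}(I:b)$ and $(\phi^{-1}(I):a)$ lie in $\mathcal{G}$. Corollary \ref{cor:quoz} as stated assumes only $\mathcal{G}\subseteq\mathcal{G}_0$, so it does not literally follow by citing the propositions. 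Your Step 2 supplies exactly the right substitute: because every $I\in\mathcal{G}$ contains $\ker\phi$, every colon $(I:_Bb)$ does as well, and $\phi(I:_Bb)=(\phi(I):_{B'}\phi(b))$ with $\phi^{-1}$ reversing this; that identity does the work that upward closedness did, namely keep the bookkeeping between colon modules on the two sides coherent, and from there the two directions of $(\dagger)$ transport as you write them. In the paper's actual applications (e.g.\ $\mathcal{G}=\insfracid_0(D,T)$ in Theorem \ref{teor:PID}) the set $\mathcal{G}$ happens to be upward closed, so the gap is harmless downstream, but your self-contained argument is the one that actually proves the corollary at the generality stated, and is the cleaner formulation.
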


\section{Principal ideal domains}\label{sect:PID}
Let $A\subseteq B$ be a ring extension. We set
\begin{equation*}
\mathcal{F}_0(A,B):=\{I\in\inssubmod_A(B)\mid IB=B\};
\end{equation*}
note that if $C$ is a further extension of $B$ then this set is also equal to $\{I\in\inssubmod_A(C)\mid IB=B\}$. To simplify the notation, we set
\begin{equation*}
\insmult_0(A,B):=\insmult(A,B,\mathcal{F}_0(A,B)).
\end{equation*}

If $D$ is an integral domain and $K$ its quotient field, $\insfracid_0(D,K)$ is just the set of nonzero $D$-submodules of $K$, and thus $\insmult_0(D,K)$ is the set of semistar operations on $D$.

If $D$ is an integral domain with quotient field $K$, an \emph{overring} of $D$ is a ring contained between $D$ and $K$; if $T$ is an overring of $D$, we set
\begin{equation*}
\mathcal{F}_p(D,T):=\{I\in\inssubmod_D(K)\mid IT=aT\text{~for some~}a\in K\}.
\end{equation*}

\begin{teor}\label{teor:principali}
Let $D$ be an integral domain with quotient field $K$, let $T$ be an overring of $D$, and let $\Delta(T):=\{\star\in\insmult(D,K,\mathcal{F}_p(D,T))\mid T=T^\star\}$. Then, the restriction map
\begin{equation*}
\begin{aligned}
\rho\colon\Delta(T) & \longrightarrow\insmult_0(D,T)\\
\star & \longmapsto\star|_{\mathcal{F}_0(D,T)}
\end{aligned}
\end{equation*}
is an order isomorphism.
\end{teor}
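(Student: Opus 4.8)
The plan is to show that $\rho$ is well-defined, order-preserving, injective, and surjective. The key observation to keep in the foreground is that $\mathcal{F}_0(D,T)$ is upward closed in $\inssubmod_D(K)$ and is contained in $\mathcal{F}_p(D,T)$, and that both sets contain $T$ as an element; moreover $T$ is closed by every $\star \in \Delta(T)$ by definition, and $T$ is automatically closed by every multiplicative operation on $\mathcal{F}_0(D,T)$ since $T = T^2 \cdot \ldots$ — more precisely $T^\star \supseteq T$ and $T^\star B = T^\star T = T$ forces $T^\star \in \mathcal{F}_0(D,T)$, but we actually want $T^\star = T$, which needs the multiplicative property (as in the discussion after Proposition~\ref{prop:ex-star}). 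So I should first record that both $\Delta(T)$ and $\insmult_0(D,T)$ are exactly the sets of multiplicative operations on their respective $\mathcal{G}$'s that fix $T$, with $\mathcal{F}_0(D,T)$ being (essentially) the quasi-downward/upward-closed ``principal-modulo-$T$'' piece of $\mathcal{F}_p(D,T)$ sitting below $[T]$ in the multiplicative order.

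\textbf{Well-definedness and injectivity.} For well-definedness: given $\star \in \Delta(T)$, since $T$ is $\star$-closed and $\mathcal{F}_0(D,T) \subseteq \mathcal{F}_p(D,T)$ with $T$ the maximum-type element, I claim $I^\star \in \mathcal{F}_0(D,T)$ whenever $I \in \mathcal{F}_0(D,T)$: indeed $I \subseteq I^\star$ gives $T = IT \subseteq I^\star T \subseteq T$, so $I^\star T = T$, i.e. $I^\star \in \mathcal{F}_0(D,T)$. (This is the analogue of Proposition~\ref{prop:restriction}\ref{prop:restriction:map}, using that $\mathcal{F}_0(D,T)$ is the interval of modules $I$ with $IT = T$.) Then $\rho(\star) = \star|_{\mathcal{F}_0(D,T)}$ is a multiplicative operation on $(D,T,\mathcal{F}_0(D,T))$ by Proposition~\ref{prop:restriction}\ref{prop:restriction:ex}, so $\rho$ lands in $\insmult_0(D,T)$. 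Order-preservation is immediate. For injectivity: if $\star_1, \star_2 \in \Delta(T)$ agree on $\mathcal{F}_0(D,T)$, then for any $I \in \mathcal{F}_p(D,T)$ pick $a \in K$ with $IT = aT$; then $a^{-1}I \in \mathcal{F}_0(D,T)$ (since $(a^{-1}I)T = a^{-1}(aT) = T$), so $a^{-1}I^{\star_1} = (a^{-1}I)^{\star_1} = (a^{-1}I)^{\star_2} = a^{-1}I^{\star_2}$ by the multiplicative (product) property from Lemma~\ref{lemma:defproduct}, hence $I^{\star_1} = I^{\star_2}$. So $\star_1 = \star_2$.

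\textbf{Surjectivity.} This is the heart of the argument and the step I expect to be the main obstacle, because one must extend a closure from $\mathcal{F}_0(D,T)$ to all of $\mathcal{F}_p(D,T)$ and check it is genuinely multiplicative there, not merely on the subset. Given $\ast \in \insmult_0(D,T)$, define $\widetilde{\ast}$ on $\mathcal{F}_p(D,T)$ by: for $I \in \mathcal{F}_p(D,T)$ with $IT = aT$, set $I^{\widetilde{\ast}} := a\,(a^{-1}I)^{\ast}$, where $a^{-1}I \in \mathcal{F}_0(D,T)$. This mimics the well-definedness-of-star argument in Proposition~\ref{prop:ex-star}: if $IT = aT = bT$ then $ab^{-1}$ is a unit of $T$, so $(a^{-1}I)$ and $(b^{-1}I)$ differ by multiplication by the unit $b a^{-1} \in T$, and one uses the conductor form $(\dagger)$ of multiplicativity for $\ast$ on $\mathcal{F}_0(D,T)$ (exactly as in the $(eI)^\ast = (dI:e^{-1}d)^\ast \subseteq ((dI)^\ast : e^{-1}d)$ computation) to get $b^{-1}(b^{-1}I \cdot \text{stuff})$... more carefully, $(a^{-1}I : b a^{-1}) = b^{-1}I$ and this lies in $\mathcal{F}_0(D,T)$, so $(b^{-1}I)^\ast \subseteq ((a^{-1}I)^\ast : ba^{-1})$, giving $b(b^{-1}I)^\ast \subseteq a(a^{-1}I)^\ast$, and symmetry yields equality — so $\widetilde{\ast}$ is well-defined. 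Then I verify $\widetilde{\ast}$ is extensive, order-preserving, idempotent, and satisfies $x I^{\widetilde{\ast}} = (xI)^{\widetilde{\ast}}$ for $x \in K$ (each following the template of Proposition~\ref{prop:ex-star}, since multiplying $I$ by $x \in K^\times$ just rescales the representative $a$), hence by Lemma~\ref{lemma:defproduct} it is multiplicative on $(D,K,\mathcal{F}_p(D,T))$ — here I should double-check the hypotheses of Lemma~\ref{lemma:defproduct} hold for $\mathcal{G} = \mathcal{F}_p(D,T)$, i.e. that conductors, colons by elements, products, and scalar multiples of nonzero members of $\mathcal{F}_p(D,T)$ stay in $\mathcal{F}_p(D,T)$; if not, I argue multiplicativity directly via $(\dagger)$. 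Finally, taking $a = 1$ shows $T^{\widetilde{\ast}} = T^\ast = T$ (so $\widetilde{\ast} \in \Delta(T)$) and $I^{\widetilde{\ast}} = I^\ast$ for $I \in \mathcal{F}_0(D,T)$, so $\rho(\widetilde{\ast}) = \ast$. Therefore $\rho$ is a bijection that preserves and reflects order (reflection is clear since its inverse $\ast \mapsto \widetilde{\ast}$ is visibly order-preserving), i.e. an order isomorphism.

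The main obstacle, as noted, is confirming multiplicativity of the extension $\widetilde{\ast}$ on all of $\mathcal{F}_p(D,T)$: the rescaling trick handles scalar multiplication by $K^\times$ cleanly, but one must be careful that the representative $a$ with $IT = aT$ is only determined up to a unit of $T$, so every formula must be checked to be $T^\times$-equivariant, and the conductor condition $(\dagger)$ for $\widetilde{\ast}$ against a general $b \in K$ must be reduced to the known conductor condition for $\ast$ on the normalized module $a^{-1}I \in \mathcal{F}_0(D,T)$ — which is precisely the bookkeeping carried out in Proposition~\ref{prop:ex-star}, now with the role of $D$ played by $T$ and the role of $\insfracid(D)$ played by $\mathcal{F}_p(D,T)$.
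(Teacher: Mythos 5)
Your overall plan — define the extension $\widetilde{\ast}$ on $\mathcal{F}_p(D,T)$ by $I^{\widetilde{\ast}} := a(a^{-1}I)^\ast$ where $IT=aT$, check it is a closure operation, then show $\widetilde{\ast}$ restricts to $\ast$ — is exactly the paper's route, and your treatments of well-definedness (for $\rho$ and for $\widetilde{\ast}$) and of injectivity are correct. However, there is one genuine gap, and one worry you raise that is actually a non-issue.

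\textbf{The gap: order-preservation of $\widetilde{\ast}$.} You assert that extensivity, idempotence, $K^\times$-homogeneity, and order-preservation ``each follow the template of Proposition~\ref{prop:ex-star}, since multiplying $I$ by $x\in K^\times$ just rescales the representative $a$.'' That template hands you the first three, but not order-preservation. In Proposition~\ref{prop:ex-star} the order-preservation step is the one-line observation that if $I\subseteq J$ and $dJ\subseteq D$ then $dI\subseteq D$, i.e.\ a \emph{common} normalizer $d$ works for both ideals, after which one simply invokes that $\ast$ is order-preserving on $\insid(D)^\nz$. Here that device fails: if $I\subsetneq J$ with $IT=aT$ and (after rescaling) $JT=T$, then $a\in T$ need not be a unit of $T$, so $a^{-1}I\in\mathcal{F}_0(D,T)$ while $a^{-1}J\notin\mathcal{F}_0(D,T)$; there is no common representative that places both $a^{-1}I$ and $b^{-1}J$ in $\mathcal{F}_0(D,T)$ and keeps the inclusion. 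You therefore cannot deduce $I^{\widetilde{\ast}}\subseteq J^{\widetilde{\ast}}$ from the order-preservation of $\ast$ alone. The paper's proof at this point is the real content of the theorem: with $JT=T$ and $a\in T$, one shows $a^{-1}I\subseteq a^{-1}J^\ast\cap T=(J^\ast:_T a)$, observes that $(J^\ast:_T a)\in\mathcal{F}_0(D,T)$ is $\ast$-closed (this is precisely where the multiplicative property $(\dagger)$ of $\ast$ is used, via Proposition~\ref{prop:prop}\ref{prop:prop:colon}), and concludes $(a^{-1}I)^\ast\subseteq(J^\ast:_T a)$, hence $a(a^{-1}I)^\ast\subseteq J^\ast=J^{\widetilde{\ast}}$. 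Without this colon argument your proof does not close; being a mere closure operation on $\mathcal{F}_0(D,T)$ is not enough, and the ``rescaling'' heuristic is silent on it.

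\textbf{The non-issue: multiplicativity of $\widetilde{\ast}$.} You flag as a potential obstacle that $\mathcal{F}_p(D,T)$ may not satisfy the hypotheses of Lemma~\ref{lemma:defproduct} (and indeed it does not — e.g.\ $(D:T)\in\mathcal{F}_p(D,T)$ need not hold). But you do not need that lemma here. Since the ambient ring in $(\dagger)$ is $B=K$, a field, every nonzero $b$ is invertible, so $(I:_K b)=b^{-1}I$, and condition $(\dagger)$ for $\widetilde{\ast}$ is literally the $K^\times$-homogeneity $(b^{-1}I)^{\widetilde{\ast}}=b^{-1}I^{\widetilde{\ast}}$ that you have already established (the case $b=0$ is vacuous, since $K\notin\mathcal{F}_p(D,T)$ unless $T=K$). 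So once you have a closure operation with $K^\times$-homogeneity, multiplicativity is immediate; the only hard step is order-preservation.
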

\begin{proof}
To simplify the notation, let $\mathcal{G}:=\mathcal{F}_p(D,T)$ and $\mathcal{G}_0:=\mathcal{F}_0(D,T)$.

Let $\star\in\Delta(T)$. If $I\in\mathcal{G}_0$, then $I\subseteq T$; since $T=T^\star$, we have $I\subseteq I^\star\subseteq T$ and thus $T=IT\subseteq I^\star T\subseteq T$, i.e., $I^\star T=T$ or $I^\star\in\mathcal{G}_0$. Hence, $\rho(\star)$ is well-defined.

Clearly, $\rho(\star)$ is a closure operation; furthermore, it is multiplicative as it is the restriction of a multiplicative operation. Thus, $\rho$ is well-defined. Since $I^\star=I^{\rho(\star)}$ for every $I\in\mathcal{G}_0$, $\rho$ is injective.

We now show that $\rho$ is surjective. Let $\star\in\Delta(T)$, and define
\begin{equation*}
\begin{aligned}
\sharp\colon\mathcal{G} & \longrightarrow\mathcal{G},\\
I & \longmapsto a(a^{-1}I)^\star
\end{aligned}
\end{equation*}
where $a$ is any element of $K$ such that $IT=aT$. Then, $\sharp$ is well-defined since if $IT=a'T$ then $a'=ua$ for some unit $u$ of $T$, and
\begin{equation*}
a(a^{-1}I)^\star=auu^{-1}(a^{-1}I)^\star=au(u^{-1}a^{-1}I)^\star=a'(a'^{-1}I)^\star.
\end{equation*}
In particular, $\sharp$ is an extension of $\star$.

The map $\sharp$ is clearly extensive, and $(bI)^\sharp=bI^\sharp$ for every $b\in K$ and every $I\in\mathcal{G}_0$. Since $J\subseteq J^\star\subseteq T$ for every $J\in\insfracid_0(D,T)$, we have $I^\sharp T=IT$ for every fractional ideal $I$, and thus
\begin{equation*}
(I^\sharp)^\sharp=a(a^{-1}I^\sharp)^\star=a(a^{-1}a(a^{-1}I)^\star)^\star=a(a^{-1}I)^\star=I^\sharp,
\end{equation*}
that is, $\sharp$ is idempotent.

To show that it is order-preserving, let $I\subseteq J$ be two elements of $\mathcal{G}$, and suppose $IT=aT$, $JT=bT$. By multiplying for $b^{-1}$, we can suppose without loss of generality that $JT=T$; in particular, $a\in T$. We have $a^{-1}I\in\mathcal{G}_0$, and thus
\begin{equation*}
a^{-1}I\subseteq a^{-1}J\cap T\subseteq a^{-1}J^\star\cap T=(J^\star:_Ta).
\end{equation*}
The ideal $(J^\star:_Ta)$ is contained between $a^{-1}I$ and $T$, and thus it is in $\mathcal{G}_0$. Since $\star$ is multiplicative, it follows that $(J^\star:_Ta)$ is also $\star$-closed; hence, $(I:_Ta)^\star\subseteq(J^\star:_Ta)\subseteq(J^\star:_Ta)$, that is, $a(a^{-1}I)^\star\subseteq J^\star$. Since $JT=T$, we have $J^\star=J^\sharp$, and so $I^\sharp\subseteq J^\sharp$.

Hence, every $\star\in\insmult(D,T,\mathcal{G}_0)=\insmult_0(D,T)$ can be extended to a multiplicative operation on $(D,K,\mathcal{G})$, and thus $\rho$ is surjective. Since $\rho$ is clearly order-preserving, it follows that $\rho$ is an order isomorphism.
\end{proof}

The most obvious application of Theorem \ref{teor:principali} is when $T$ is a principal ideal domain; with two additional lemmas, we can say something more.
\begin{lemma}\label{lemma:explodes}
Let $D$ be an integral domain, and let $T$ be an overring. If $J$ is a $D$-module such that $JT=T$, then $(D:T)\subseteq J$.
\end{lemma}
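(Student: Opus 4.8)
The plan is the standard ``write $1$ as a finite combination'' argument. Here $(D:T)$ denotes the conductor $(D:_K T)=\{x\in K\mid xT\subseteq D\}$.

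First I would fix $x\in(D:T)$ and aim to show $x\in J$. Since $JT=T$ is a ring containing $1$, and $JT$ consists of finite sums of products $jt$ with $j\in J$, $t\in T$, I can write $1=\sum_{i=1}^n j_it_i$ with $j_i\in J$ and $t_i\in T$. Multiplying by $x$ gives $x=\sum_{i=1}^n j_i(xt_i)$. Now $xt_i\in xT\subseteq D$ because $x\in(D:T)$, so each summand $j_i(xt_i)$ lies in $J$ since $J$ is a $D$-module; hence $x\in J$.

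There is essentially no obstacle here: the only thing to be slightly careful about is that $JT$ really is the $D$-submodule of $K$ generated by the products $jt$, so that $1\in JT$ genuinely yields a \emph{finite} expression $1=\sum j_it_i$, and that $T$ being a ring is not even needed beyond $1\in T$ (we only use $1\in JT$). The argument works verbatim for any overring $T$, and gives the inclusion $(D:T)\subseteq J$ as claimed.
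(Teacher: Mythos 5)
Your proof is correct and is essentially identical to the paper's: both write $1=\sum_i j_i t_i$ from $1\in JT$, multiply by a conductor element, and use $t_i x\in T(D:T)\subseteq D$ together with $J$ being a $D$-module to conclude. Nothing to add.
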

\begin{proof}
By hypothesis, $1\in JT$, and thus we can find $j_1,\ldots,j_n\in J$, $t_1,\ldots,t_n\in T$ such that $1=j_1t_1+\cdots+j_nt_n$. If $z\in(D:T)$, then
\begin{equation*}
z=(j_1t_1+\cdots+j_nt_n)z=j_1t_1z+\cdots+j_nt_nz\in j_1D+\cdots+j_nD\subseteq J
\end{equation*}
since $t_iz\in T(D:T)\subseteq D$ for every $i$. Hence, $(D:T)\subseteq J$.
\end{proof}

\begin{lemma}\label{lemma:PIDFover}
Let $D$ be an integral domain. There is at most one overring of $D$ that is both a principal ideal domain and a fractional ideal of $D$; furthermore, if it exists then it is the biggest overring of $D$ that is also a fractional ideal.
\end{lemma}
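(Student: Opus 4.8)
The plan is to prove both the uniqueness and the ``biggest'' statement together, by showing directly that any principal ideal domain $P$ which is an overring of $D$ and a fractional ideal of $D$ must contain every overring $T$ of $D$ that is a fractional ideal. Uniqueness is then immediate: if $P$ and $P'$ are two such, then $P\subseteq P'$ and $P'\subseteq P$, so $P=P'$.

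So fix such a $P$ and let $T$ be any overring of $D$ that is a fractional ideal of $D$; I want $T\subseteq P$. The key point is that $TP$ is an overring of $P$, hence (since $P$ is a PID, so in particular integrally closed and one-dimensional locally, and more to the point, since overrings of a PID are localizations of $P$ and thus again PIDs — here I will invoke the standard fact that every overring of a PID is a localization, hence a PID) $TP$ is itself a PID and a $P$-module. Moreover $TP$ is a fractional ideal of $D$: both $T$ and $P$ are fractional ideals of $D$, say $dT\subseteq D$ and $eP\subseteq D$, and then $(de)(TP)\subseteq D\cdot D\subseteq D$, so $TP$ is a fractional ideal. Now $TP$ is a fractional ideal of $D$ that is also an overring of $D$ and a PID; if I can show $TP$ is a fractional ideal of $P$ in the strong sense needed, then I would like to conclude $TP=P$, which gives $T\subseteq TP=P$ as desired. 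The cleanest way: since $TP$ is a fractional ideal of $D$ there is $d\in D\setminus\{0\}$ with $d\,TP\subseteq D\subseteq P$; thus $TP$ is a fractional ideal of the PID $P$, so $TP=xP$ for some $x$ in the quotient field. But $1\in P\subseteq TP=xP$ forces $x^{-1}\in P$, and $x\in TP$ means... here one must be a little careful, so instead argue: $TP$ is an overring of $P$ and a fractional ideal of $P$; a fractional ideal of a PID which is also a ring containing $1$ must equal $P$, because $xP$ is a subring containing $1$ only if $x$ and $x^{-1}$ both lie in $xP$, forcing $x$ to be a unit of $P$, whence $xP=P$. Therefore $TP=P$ and $T\subseteq P$.

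The main obstacle I anticipate is the step asserting that an overring of a PID is again a PID (indeed a localization of it): this is a classical fact but deserves a citation or a one-line justification, and it is exactly what makes $TP$ a PID so that the fractional-ideal-of-a-PID argument can be run. A secondary subtlety is making the final ``a ring-shaped fractional ideal of a PID is the PID itself'' argument airtight — one should phrase it as: if $xP$ is closed under multiplication and contains $1$, then from $1\in xP$ we get $x^{-1}\in P$, and from $x=x\cdot 1\in xP$ together with $(x^{-1})(x)=1$ and $xP\cdot xP\subseteq xP$ we get $x\in xP$ implies (multiplying by the generator relation) that $x\in P$ as well, so $x$ is a unit and $xP=P$. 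Alternatively, and perhaps more transparently, one can avoid naming a generator: $TP$ is an overring of $P$ which is a fractional ideal of $P$, and for a PID $P$ the only overring that is a fractional ideal is $P$ itself, since a proper overring $P[1/s]$ (localization) is never a fractional ideal unless $s$ is a unit — this is because $P[1/s]$ fails to be finitely generated as a $P$-module when $s$ is a non-unit. Either route closes the argument; I would present the localization route as it dovetails with the ``overring of a PID is a localization'' fact already needed.

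\begin{oss}
In fact the argument shows slightly more: the unique such overring, when it exists, is $\bigcap$-compatible with the fractional-ideal overrings in the sense that it contains all of them, i.e.\ it is literally their supremum (and join) in the lattice of overrings that are fractional ideals. This is the statement that will be used in Section \ref{sect:Artin}.
\end{oss}
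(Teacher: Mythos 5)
Your proof is correct and takes essentially the same route as the paper: both arguments reduce to the observation that a principal ideal domain $P$ admits no proper overring that is simultaneously a fractional ideal of $P$, and both apply this to the compositum ($TP$ in your version, $T_1T_2$ in the paper's). The paper states this more tersely as a contradiction between two candidates, while you derive the containment $T\subseteq P$ directly and spell out why the compositum is a fractional ideal of $P$; the underlying mechanism is identical.
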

\begin{proof}
Suppose there are two, say $T_1$ and $T_2$; without loss of generality, $T_2\nsubseteq T_1$. Then, $T_1T_2$ is a proper overring of $T_1$ which is also a fractional ideal over $T_1$; however, since $T_1$ is a principal ideal domain this is impossible. The last claim follows in the same way.
\end{proof}

\begin{teor}\label{teor:PID}
Let $D$ be an integral domain, and let $T$ be an overring of $D$ that is a principal ideal domain. If $I:=(D:T)\neq(0)$, then the sets $\insfstar(D)$, $\insmult_0(D,T)$ and $\insmult_0(D/I,T/I)$ are order-isomorphic.
\end{teor}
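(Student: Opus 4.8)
The plan is to produce two order isomorphisms, $\insfstar(D)\simeq\insmult_0(D,T)$ (via Theorem~\ref{teor:principali}) and $\insmult_0(D,T)\simeq\insmult_0(D/I,T/I)$ (via Corollary~\ref{cor:quoz}), and then to compose them.

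For the first isomorphism, the starting observation is that $I=(D:T)\neq(0)$ is exactly the statement that $T$ is a fractional ideal of $D$; hence, by Lemma~\ref{lemma:PIDFover}, $T$ is the \emph{largest} overring of $D$ that is a fractional ideal. I would first check that $\mathcal{F}_p(D,T)=\insfracid(D)^\nz$: on one hand, if $J$ is a nonzero fractional ideal of $D$ then $JT$ is a nonzero fractional ideal of $T$, hence finitely generated since $T$ is Noetherian, hence principal since $T$ is a PID, so $J\in\mathcal{F}_p(D,T)$; on the other hand, if $J\in\mathcal{F}_p(D,T)$ then $J\subseteq JT=aT$, and $aT$ is a nonzero fractional ideal of $D$ (because $T$ is), so $J$ is a nonzero fractional ideal. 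By Example~\ref{ex:general}\ref{ex:general:star} this identifies $\insmult(D,K,\mathcal{F}_p(D,T))$ with $\insfstar(D)$. The key point is then that every fractional star operation $\star$ on $D$ fixes $T$, so that the set $\Delta(T)=\{\star\in\insmult(D,K,\mathcal{F}_p(D,T))\mid T=T^\star\}$ of Theorem~\ref{teor:principali} is all of $\insfstar(D)$. To see this I would argue: from $tT\subseteq T$ and $(tT)^\star=tT^\star$ (for $t\in T$) one gets $tT^\star\subseteq T^\star$, so $T^\star$ is a $T$-submodule of $K$; then, for $x\in T^\star$ one has $xT\subseteq T^\star$, whence $xT^\star=(xT)^\star\subseteq(T^\star)^\star=T^\star$, so $T^\star$ is closed under multiplication and hence is an overring of $D$; being also a fractional ideal of $D$, it is contained in $T$ by maximality, and since $T\subseteq T^\star$ we conclude $T^\star=T$. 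Then Theorem~\ref{teor:principali} gives $\insfstar(D)=\Delta(T)\simeq\insmult_0(D,T)$.

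For the second isomorphism, let $\phi\colon T\to T/I$ be the quotient map. Since $I=(D:T)\subseteq D$, one has $\phi^{-1}(D/I)=D$, so $\phi$ is a quotient between the extensions $D\subseteq T$ and $D/I\subseteq T/I$, with $\ker\phi=I$. By Lemma~\ref{lemma:explodes}, every $J\in\mathcal{F}_0(D,T)$ contains $I$, so $\mathcal{F}_0(D,T)$ is contained in the set $\mathcal{G}_0$ appearing in Corollary~\ref{cor:quoz}; that corollary then yields $\insmult_0(D,T)\simeq\insmult(D/I,T/I,\phi(\mathcal{F}_0(D,T)))$. The last thing to verify is that $\phi(\mathcal{F}_0(D,T))=\mathcal{F}_0(D/I,T/I)$: if $JT=T$ and $I\subseteq J$, then $I\subseteq J\subseteq JT$ and $(J/I)(T/I)=\phi(JT)=T/I$; conversely, given a $(D/I)$-submodule $\overline{J}$ of $T/I$ with $\overline{J}(T/I)=T/I$, set $J:=\phi^{-1}(\overline{J})\supseteq I$, so that $(JT)/I=\phi(JT)=\overline{J}(T/I)=T/I$ forces $JT=T$ and hence $\overline{J}=\phi(J)\in\phi(\mathcal{F}_0(D,T))$. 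This gives $\insmult_0(D,T)\simeq\insmult_0(D/I,T/I)$, and composing the two isomorphisms proves the statement.

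I expect the main obstacle to be the first isomorphism, specifically the two facts that $\mathcal{F}_p(D,T)$ collapses to $\insfracid(D)^\nz$ and that every fractional star operation fixes $T$. Both rest on the interplay between ``$T$ is a PID'' and ``$T$ is a fractional ideal of $D$'', packaged by Lemma~\ref{lemma:PIDFover} as the statement that $T$ is the maximal fractional-ideal overring of $D$; once this is available, Theorem~\ref{teor:principali} does the rest. The second isomorphism should be essentially formal, being a direct application of the functoriality results (Corollary~\ref{cor:quoz}) together with Lemma~\ref{lemma:explodes} and the definition of $\mathcal{F}_0$.
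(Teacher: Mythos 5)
Your proposal is correct and follows essentially the same two-step route as the paper: first identifying $\mathcal{F}_p(D,T)$ with the nonzero fractional ideals of $D$ and using Lemma~\ref{lemma:PIDFover} to show $\Delta(T)=\insfstar(D)$ so that Theorem~\ref{teor:principali} gives $\insfstar(D)\simeq\insmult_0(D,T)$, then passing to the quotient via Corollary~\ref{cor:quoz} and Lemma~\ref{lemma:explodes}. The paper's proof is terser — it asserts without proof that every fractional star operation fixes $T$, that $\mathcal{F}_p(D,T)$ collapses to $\insfracid(D)^\nz$, and that $\phi(\insfracid_0(D,T))=\insfracid_0(D/I,T/I)$ — whereas you supply the verifications; your argument that $T^\star$ is a $T$-module closed under multiplication and hence an overring, then pinned down by maximality, is exactly the intended justification.
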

\begin{proof}
Since $T$ is a PID, $\mathcal{F}_p(D,T)$ coincides with the set of fractional ideals of $D$. Since $(D:T)\neq (0)$, $T$ is a fractional ideal of $D$; by Lemma \ref{lemma:PIDFover}, $T$ is the largest overring of $D$ that is a fractional ideal of $D$. Hence, $T=T^\star$ for every fractional star operation $\star$ and so (in the terminology of Theorem \ref{teor:principali}) $\Delta(T)=\insmult(D,K,\insfracid(D)^\nz)$. As $\insfstar(D)=\insmult(D,K,\insfracid(D)^\nz)$, by Theorem \ref{teor:principali} we have an isomorphism between $\insfstar(D)$ and $\insmult_0(D,T)$.

The map $\phi:T\longrightarrow T/I$ is a quotient between the extensions $D\subseteq T$ and $D/I\subseteq T/I$. Each member of $\insfracid_0(D,T)$ contains $\ker\phi=I$ (Lemma \ref{lemma:explodes}); moreover, $\phi(\insfracid_0(D,T))=\insfracid_0(D/I,T/I)$. Thus, by Corollary \ref{cor:quoz}, we have $\insmult_0(D,T)\simeq\insmult_0(D/I,T/I)$. The claim is proved.
\end{proof}

\begin{cor}
Let $D$ be an integral domain, and let $T$ be an overring of $D$ that is a principal ideal domain. If $I:=(D:T)\neq(0)$, then there is an isomorphism between $\insstar(D)$ and $\{\star\in\insmult_0(D/I,T/I)\mid D/I=(D/I)^\star\}$.
\end{cor}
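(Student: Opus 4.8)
The plan is to trace the chain of order isomorphisms produced in the proof of Theorem~\ref{teor:PID} and keep track of the extra condition ``$D=D^\star$'' that cuts out $\insstar(D)$ inside $\insfstar(D)$. By definition, $\insstar(D)$ is exactly the set of $\star\in\insfstar(D)$ with $D=D^\star$, equipped with the order inherited from $\insfstar(D)$. Now Theorem~\ref{teor:PID} supplies order isomorphisms $\insfstar(D)\xrightarrow{\rho}\insmult_0(D,T)\xrightarrow{\Phi}\insmult_0(D/I,T/I)$; inspecting its proof, $\rho$ is the restriction map of Theorem~\ref{teor:principali} and $\Phi$ is the push-forward map of Proposition~\ref{prop:pushforward} along the quotient $\phi\colon T\to T/I$ (see also Corollary~\ref{cor:quoz}). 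Since both are order isomorphisms, it suffices to check that the composite carries $\{\star\in\insfstar(D)\mid D=D^\star\}$ bijectively onto $\{\star\in\insmult_0(D/I,T/I)\mid D/I=(D/I)^\star\}$.

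First I would handle $\rho$. As $D\subseteq T$ and $DT=T$, the module $D$ itself belongs to $\mathcal{F}_0(D,T)$; since $\rho(\star)=\star|_{\mathcal{F}_0(D,T)}$, we get $D^{\rho(\star)}=D^\star$ for every $\star\in\insfstar(D)$. Hence $\rho$ restricts to an order isomorphism between $\insstar(D)=\{\star\in\insfstar(D)\mid D=D^\star\}$ and $\{\sharp\in\insmult_0(D,T)\mid D=D^\sharp\}$.

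Next I would handle $\Phi$. For $\sharp\in\insmult_0(D,T)$ the definition of $\Phi$ gives $(D/I)^{\Phi(\sharp)}=\phi\big(\phi^{-1}(D/I)^\sharp\big)=\phi(D^\sharp)$, where $\phi^{-1}(D/I)=D$ because $I\subseteq D$. Now $D^\sharp\in\mathcal{F}_0(D,T)$, so $D^\sharp T=T$, and Lemma~\ref{lemma:explodes} yields $\ker\phi=I=(D:T)\subseteq D^\sharp$. Therefore $\phi(D^\sharp)=D/I$ forces $D^\sharp=\phi^{-1}(\phi(D^\sharp))=\phi^{-1}(D/I)=D$, while conversely $D^\sharp=D$ trivially gives $\phi(D^\sharp)=D/I$. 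Thus $\Phi$ restricts to an order isomorphism between $\{\sharp\in\insmult_0(D,T)\mid D=D^\sharp\}$ and $\{\star\in\insmult_0(D/I,T/I)\mid D/I=(D/I)^\star\}$, and composing with the previous step finishes the proof.

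The only genuinely nontrivial point — the step I would flag as the main obstacle — is the reflecting implication $\phi(D^\sharp)=D/I\Rightarrow D^\sharp=D$; everything else is a matter of unwinding definitions. That implication is precisely where Lemma~\ref{lemma:explodes} (and hence the hypothesis $I=(D:T)\neq(0)$) is used: it guarantees that every $\sharp$-closure of a module that expands to all of $T$ already contains $I$, so that closing $D$ cannot enlarge it beyond what is visible modulo $I$.
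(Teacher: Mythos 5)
Your proof is correct and takes essentially the same approach as the paper, which disposes of this corollary in one line by observing that the isomorphisms of Theorem~\ref{teor:PID} preserve whether $D$ is closed; you simply unwind that observation into its two steps (the restriction $\rho$ and the push-forward $\Phi$) and verify each explicitly. One small simplification you could make: since every member of $\mathcal{F}_0(D,T)$ already contains $I=\ker\phi$ by Lemma~\ref{lemma:explodes}, the map $L\mapsto\phi(L)$ is injective on $\mathcal{F}_0(D,T)$, so $\phi(D^\sharp)=D/I=\phi(D)$ gives $D^\sharp=D$ immediately without re-deriving the containment $I\subseteq D^\sharp$ for $D^\sharp$ specifically.
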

\begin{proof}
The isomorphisms of Theorem \ref{teor:PID} preserve whether $D$ is closed.
\end{proof}

\begin{cor}\label{cor:isoext}
Let $D,D'$ be integral domains, and let $T,T'$ be principal ideal domains that are overrings of $D$ and $D'$, respectively. If the ring extensions $D/(D:T)\subseteq T/(D:T)$ and $D'/(D':T')\subseteq T'/(D':T')$ are isomorphic, then $\insstar(D)\simeq\insstar(D')$.
\end{cor}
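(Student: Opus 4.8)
The plan is to deduce this from the previous corollary together with the functoriality of $\insmult_0$ under isomorphisms of extensions. Write $I:=(D:T)$ and $I':=(D':T')$, and let $\phi\colon T/I\to T'/I'$ be the given isomorphism of extensions, so that $\phi$ is a ring isomorphism carrying $D/I$ onto $D'/I'$. By the previous corollary, $\insstar(D)$ is order-isomorphic to $\mathcal{A}:=\{\star\in\insmult_0(D/I,T/I)\mid D/I=(D/I)^\star\}$ and $\insstar(D')$ to the analogous set $\mathcal{A}'\subseteq\insmult_0(D'/I',T'/I')$; so it suffices to show that $\phi$ induces an order isomorphism $\mathcal{A}\simeq\mathcal{A}'$.

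Put $A:=D/I$, $B:=T/I$, $A':=D'/I'$, $B':=T'/I'$. First I would note that $\phi$ is at once a homomorphism of extensions and a quotient of extensions: it is surjective, and $\phi^{-1}(A')=A$ because $\phi(A)=A'$. Being bijective, $\phi$ maps $\inssubmod_A(B)$ onto $\inssubmod_{A'}(B')$, and since $JB=B$ holds iff $\phi(J)B'=B'$ it identifies $\mathcal{F}_0(A,B)$ with $\mathcal{F}_0(A',B')$; both are upward closed in the respective submodule lattices. Proposition~\ref{prop:pullback} then provides an order-preserving map $\Psi\colon\insmult_0(A',B')\to\insmult_0(A,B)$, and Proposition~\ref{prop:pushforward} an order-preserving map $\Phi\colon\insmult_0(A,B)\to\insmult_0(A',B')$; parts~\ref{prop:pushforward:id'} and~\ref{prop:pushforward:id} of the latter proposition (the second applicable since $\ker\phi=(0)$ is contained in every module) show that $\Psi$ and $\Phi$ are mutually inverse, so $\Psi$ is an order isomorphism. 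One could equally well quote Corollary~\ref{cor:quoz} with $\mathcal{G}=\mathcal{F}_0(A,B)$.

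The only step beyond bookkeeping is verifying that $\Psi$ carries $\mathcal{A}'$ onto $\mathcal{A}$, i.e.\ that $A^{\Psi(\star)}=A$ precisely when $(A')^\star=A'$. This falls out of the explicit formula in Proposition~\ref{prop:pullback}\ref{prop:pullback:def}: since $\phi(A)=A'$ we have $\phi(A)A'=A'$, hence $A^{\Psi(\star)}=\phi^{-1}\big((\phi(A)A')^\star\big)=\phi^{-1}\big((A')^\star\big)$, and as $\phi$ is injective this equals $A=\phi^{-1}(A')$ exactly when $(A')^\star=A'$. Thus $\Psi$ restricts to an order isomorphism $\mathcal{A}'\simeq\mathcal{A}$, and composing with the two identifications coming from the previous corollary yields $\insstar(D)\simeq\insstar(D')$. (As in the preceding corollaries, this argument presupposes $I\ne(0)$ and $I'\ne(0)$, which is exactly what makes the previous corollary applicable.)
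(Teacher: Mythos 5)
Your argument is correct and is essentially the one the paper leaves implicit: apply the preceding corollary to translate $\insstar(D)$ and $\insstar(D')$ into the subsets of $\insmult_0$ over the quotient extensions, and then transport one to the other along the given isomorphism of extensions using the functoriality built in Section~\ref{sect:functoriality} (here the kernel is trivial, so Proposition~\ref{prop:pushforward}\ref{prop:pushforward:id'},\ref{prop:pushforward:id} or Corollary~\ref{cor:quoz} gives a genuine order isomorphism), noting from the explicit formula in Proposition~\ref{prop:pullback}\ref{prop:pullback:def} that closedness of $D/I$ is preserved. Your closing remark about implicitly assuming $(D:T)\neq(0)$ and $(D':T')\neq(0)$ is also appropriate, as that is what makes Theorem~\ref{teor:PID} and the preceding corollary applicable.
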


\begin{oss}\label{oss:twoext}
In the proof of Theorem \ref{teor:principali}, the restriction map $\rho:\insfstar(D)\longrightarrow\insmult_0(D,T)$ can also be seen as the composition of two restrictions: the first one, say $\rho_1$, going from $\insfstar(D)=\insmult(D,K,\insfracid(D)^\nz)$ to $\insmult(D,T,\inssubmod_D(T)^\nz)$, and the second one going from $\insmult(D,T,\inssubmod_D(T)^\nz)$ to $\insmult(D,T,\insfracid_0(D,T))=\insmult_0(D,T)$. However, while $\rho_1$ is injective, it is not surjective: for example, the map $\star$ sending every $I\in\inssubmod_D(T)$ to $T$ is a multiplicative operation on $(D,T,\inssubmod_D(T)^\nz)$, as $(I^\star:b)=(T:b)=T$ for every $b\in T$, but it clearly cannot be the restriction of a fractional star operation.

As a less trivial example, let $T:=K[[X]]$ and $D:=K[[X^3,X^4,X^5]]=K+X^3K[[X]]$; then, $(D:T)=\mathfrak{m}_D=X^3K[[X]]$. Let $I:=D+X^2D=K+X^2K[[X]]$, and let $J:=XD+\mathfrak{m}_D=XK+X^3K[[X]]$. Then, $J=XI$, and thus if $\star$ is a star operation on $D$ then $I$ is $\star$-closed if and only if $J$ is $\star$-closed.

Let $\mathcal{G}$ be the set of $D$-submodules of $T$ contained between $\mathfrak{m}_D$ and $T$. Consider the multiplicative operation $\sharp$ on $(D,T,\mathcal{G})$ generated by $D$ and $I$: then, $J^\sharp=(D:_T(D:_TJ))\cap(I:_T(I:_TJ))$. We have
\begin{equation*}
(D:_TJ)=X^2K[[X]] \Longrightarrow (D:_T(D:_TJ))=XK[[X]]
\end{equation*}
and, likewise,
\begin{equation*}
(I:_TJ)=XK[[X]] \Longrightarrow (I:_T(I:_TJ))=XK[[X]]
\end{equation*}
so that $J^\sharp=XK[[X]]\neq J$. In the Artinian setting, this means that the restriction map from $\insmult(K,K[[X]]/(X^3),\inssubmod_K(K[[X]]/(X^3)))$ to $\insmult_0(K,K[[X]]/(X^3))$ is not an isomorphism.
\end{oss}

The way to obtain Theorem \ref{teor:PID} from Theorem \ref{teor:principali} can also be used in a slightly more general setting.
\begin{prop}\label{prop:princ-gen}
Let $D$ be an integral domain, and let $T$ be an overring of $D$ that is a fractional ideal of $D$. Let $I:=(D:T)$.
\begin{enumerate}[(a)]
\item\label{prop:princ-gen:Tdiv} If every $I\notin\insfracid_p(D,T)$ is divisorial over $T$, then there are isomorphisms between $\{\star\in\insfstar(D)\mid T=T^\star\}$, $\insmult_0(D,T)$ and $\insmult_0(D/I,T/I)$.
\item\label{prop:princ-gen:Ddiv} If every $I\notin\insfracid_p(D,T)$ is divisorial over $D$, then there are isomorphisms between $\insstar(D)$, $\{\star\in\insmult_0(D,T)\mid D=D^\star\}$ and $\{\star\in\insmult_0(D/I,T/I)\mid (D/I)=(D/I)^\star\}$
\end{enumerate}
\end{prop}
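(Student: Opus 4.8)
The plan is to reproduce, with the divisoriality assumptions in place of the hypothesis ``$T$ is a PID'', the argument that deduces Theorem~\ref{teor:PID} from Theorem~\ref{teor:principali}. Write $\mathcal{G}:=\insfracid_p(D,T)$ and $\mathcal{G}_0:=\insfracid_0(D,T)$, and recall $\insfstar(D)=\insmult(D,K,\insfracid(D)^\nz)$ and $\insmult_0(D,T)=\insmult(D,T,\mathcal{G}_0)$. Since $T$ is a fractional ideal of $D$ we have $I:=(D:T)\neq(0)$, so $\phi\colon T\to T/I$ is a quotient of the extensions $D\subseteq T$ and $D/I\subseteq T/I$; by Lemma~\ref{lemma:explodes} every member of $\mathcal{G}_0$ contains $\ker\phi=I$, and $\phi(\mathcal{G}_0)=\insfracid_0(D/I,T/I)$, so Corollary~\ref{cor:quoz} already yields $\insmult_0(D,T)\simeq\insmult_0(D/I,T/I)$, with $D$ and its image $D/I$ corresponding. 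Likewise, Theorem~\ref{teor:principali} gives an order isomorphism between $\Delta(T)=\{\star\in\insmult(D,K,\mathcal{G})\mid T=T^\star\}$ and $\insmult_0(D,T)$, and since $D\in\mathcal{G}\cap\mathcal{G}_0$ this isomorphism respects whether $D$ is closed. So in both parts everything reduces to matching up the appropriate subsets of $\insfstar(D)$, resp.\ $\insstar(D)$, with the appropriate subsets of $\Delta(T)$.

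For part~\ref{prop:princ-gen:Tdiv} I would prove that restriction to $\mathcal{G}$ is an order isomorphism $\{\star\in\insfstar(D)\mid T=T^\star\}\to\Delta(T)$. It is well defined: if $IT=aT$ and $T=T^\star$, then $I\subseteq aT$ gives $I^\star\subseteq aT^\star=aT$, hence $I^\star T=aT$ and $I^\star\in\mathcal{G}$; multiplicativity of the restriction is Proposition~\ref{prop:restriction}\ref{prop:restriction:ex}, and it lands in $\Delta(T)$ because $T\in\mathcal{G}$. The hypothesis enters in injectivity and surjectivity. By Lemma~\ref{lemma:div} and Proposition~\ref{prop:restriction}, $d_T\colon I\mapsto(T:_K(T:_KI))$ is a multiplicative operation on $(D,K,\insfracid(D)^\nz)$ fixing $T$; when $T=T^\star$ every $bT$ ($b\in K^\nz$) is $\star$-closed, hence $\star\le d_T$, so $I^\star\subseteq d_T(I)$, and if $I\notin\mathcal{G}$ then the assumption gives $d_T(I)=I$, whence $I^\star=I$. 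Thus every such $\star$ is the identity off $\mathcal{G}$ and is recovered from $\star|_{\mathcal{G}}$ (injectivity); for surjectivity one extends $\sharp\in\Delta(T)$ by $I^{\widehat\sharp}:=I$ for $I\notin\mathcal{G}$ and checks, using that $\mathcal{G}$ is stable under scaling by $K^\nz$, that $(T:_Kb)\in\mathcal{G}$ for all $b$, and the divisoriality hypothesis to compare elements of $\mathcal{G}$ with their complement, that $\widehat\sharp$ is a fractional star operation with $T=T^{\widehat\sharp}$ restricting to $\sharp$.

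Part~\ref{prop:princ-gen:Ddiv} is formally parallel, with the divisorial closure $v_D$ of $D$ --- the largest element of $\insstar(D)$ --- playing the role of $d_T$. Every $\star\in\insstar(D)$ satisfies $\star\le v_D$, so by the hypothesis $\star$ fixes every fractional ideal outside $\mathcal{G}$; one then sets up the isomorphism $\insstar(D)\to\{\sharp\in\insmult_0(D,T)\mid D=D^\sharp\}$ directly, e.g.\ sending $\star$ to the closure $I\mapsto I^\star\cap T$ on $\mathcal{G}_0$ (this is visibly a multiplicative operation on $(D,T,\mathcal{G}_0)$ fixing $D$, even without knowing $T=T^\star$, the idempotence and $(\dagger)$ checks using only that $T$ is a ring), composes with the isomorphisms of part~\ref{prop:princ-gen:Tdiv} to reach $\{\star\in\insmult_0(D/I,T/I)\mid D/I=(D/I)^\star\}$, and inverts it using that $\star$ is determined off $\mathcal{G}$ (identity) and on $\mathcal{G}$ by the corresponding element of $\Delta(T)$. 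I expect the main obstacle to be precisely the bijectivity in these new restriction/extension steps: verifying that gluing a multiplicative operation on $\mathcal{G}$ (or $\mathcal{G}_0$) to the identity on the complement really produces a genuine multiplicative operation --- the order-preservation and idempotence checks \emph{across the boundary} between $\mathcal{G}$ and $\insfracid(D)^\nz\setminus\mathcal{G}$ are exactly where divisoriality is needed --- and, in part~\ref{prop:princ-gen:Ddiv}, confirming that under its hypothesis a star operation on $D$ behaves compatibly with $T$ (in particular that the level set ``$D=D^\star$'' on the $\Delta(T)$ side is reached surjectively from $\insstar(D)$), which is the one point where ``divisorial over $D$'' must be leveraged to control the fractional ideals that are not of the form handled by Theorem~\ref{teor:principali}.
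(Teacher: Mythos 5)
Your strategy is the same as the paper's: reduce to the boundary between $\insfracid_p(D,T)$ and its complement $\mathcal{N}:=\insfracid(D)\setminus\insfracid_p(D,T)$, extend an operation from $\mathcal{G}_0$ by the formula of Theorem~\ref{teor:principali} on $\insfracid_p(D,T)$ and by the identity on $\mathcal{N}$, and invoke the divisoriality hypothesis exactly to handle that boundary. (The paper builds the extension $\sharp$ directly on all of $\insfracid(D)$ rather than first going through $\Delta(T)$ of Theorem~\ref{teor:principali}, but that is a cosmetic reorganization.) Your injectivity argument is correct and a clean way to say what the paper leaves implicit: $T=T^\star$ forces $\star\le d_T$, and divisoriality over $T$ of each $J\in\mathcal{N}$ then pins $J^\star=J$; likewise $\star\le v_D$ in case~(b).

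The genuine gap is that you stop at ``the order-preservation checks across the boundary are exactly where divisoriality is needed'' without carrying them out, and this is where the whole content of the proposition lives; everything else is formal. The missing argument is short but not automatic. In case~(a): take $I\in\insfracid_p(D,T)$, $J\in\mathcal{N}$, $I\subseteq J$, and after scaling assume $IT=T$; since $J$ is $T$-divisorial it is a $T$-module, hence $J\supseteq IT=T\supseteq I^{\widehat\sharp}$ (the last containment because $I^{\widehat\sharp}=I^\sharp\in\mathcal{G}_0$). In case~(b) the inequality $I^{\widehat\sharp}\subseteq J$ does not follow the same way, because $J$ need not be a $T$-module; one needs $I^{\widehat\sharp}\subseteq I^v$, and this uses that $IT=T$ forces $(D:_KI)\subseteq T$, whence $(D:_T(D:_TI))=(D:_T(D:_KI))\subseteq(D:_K(D:_KI))=I^v\subseteq J^v=J$. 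Your write-up never produces either chain, so as it stands the surjectivity/well-definedness of the extension is unverified.

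A second, smaller issue concerns your attempted fix $\rho:\star\mapsto(I\mapsto I^\star\cap T)$ in part~(b). It does make $\rho$ well-defined without knowing $T=T^\star$, but it does not obviously make it injective: if $T^{\star_1}=T\subsetneq T^{\star_2}$ one could in principle have $I^{\star_1}=I^{\star_2}\cap T$ for every $I\in\mathcal{G}_0$ without $\star_1=\star_2$. The inverse map (extend $\sharp\in\insmult_0(D,T)$ with $D^\sharp=D$ by the $\Delta(T)$--formula and the identity on $\mathcal{N}$) always yields a star operation fixing $T$, because $T$ is the maximum of $\mathcal{G}_0$ and hence $T^\sharp=T$. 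So the inverse lands in $\{\star\in\insstar(D)\mid T=T^\star\}$, and for $\rho$ and this extension to be mutually inverse on all of $\insstar(D)$ one in fact needs every star operation to fix $T$ under hypothesis~(b). That point deserves an explicit word rather than the $\cap T$ workaround; in the intended application (Proposition~\ref{prop:conductive}, $T=V$ a valuation overring with $(D:V)=\mathfrak m_V$) it is straightforward to check that $V$ is $D$-divisorial, so the issue does not arise there.
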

\begin{proof}
Let $\mathcal{N}:=\insfracid(D)\setminus\insfracid_p(D,T)$. By hypothesis, all elements of $\mathcal{N}$ are nondivisorial over $T$ (case \ref{prop:princ-gen:Tdiv}) or over $D$ (case \ref{prop:princ-gen:Ddiv}).

\ref{prop:princ-gen:Tdiv} As in the proof of Theorem \ref{teor:principali}, we are going to show that the restriction map $\rho$ from $\Delta(T):=\{\star\in\insfstar(D)\mid T=T^\star\}$ to $\insmult_0(D,T)$ is an isomorphism; the fact that $\rho$ is well-defined and injective follows in the same way.

To show that it is surjective, let $\star\in\insmult_0(D,T)$, and define
\begin{equation*}
\begin{aligned}
\sharp\colon\mathcal{F}(D) & \longrightarrow\mathcal{F}(D),\\
I & \longmapsto \begin{cases}
a(a^{-1}I)^\star & \text{if~}IT=aT,\\
I & \text{if~}I\in\mathcal{N}.
\end{cases}
\end{aligned}
\end{equation*}
The restriction of $\sharp$ to $\insfracid_p(D,T)$ is a multiplicative operation on $(D,T,\insfracid_p(D,T))$; it is clear that $\sharp$ itself is extensive and idempotent.

To show that it is order-preserving, let $I\subseteq J$. If $I\in\mathcal{N}$ or if both $I,J\in\insfracid_p(D,T)$ then clearly $I^\sharp\subseteq J^\sharp$. Suppose $I\in\insfracid_p(D,T)$ and $J\in\mathcal{N}$; without loss of generality, $IT=T$. Then,
\begin{equation*}
I^\sharp=I^\star\subseteq(T:_T(T:_TI))=(T:_T(T:_TIT))=T=IT.
\end{equation*}
However, as $J\in\mathcal{N}$, by hypothesis $J$ is divisorial over $T$, and thus in particular it is a $T$-ideal; hence, $I^\sharp=IT\subseteq JT=T=J^\sharp$, and so $\sharp$ is order-preserving. The fact that $\sharp$ is multiplicative follows immediately by the multiplicativity of $\sharp$ on $\insfracid_p(D,T)$ and by the fact that $(I:b)=b^{-1}I\in\mathcal{N}$ if and only if $I\in\mathcal{N}$.

Therefore, $\rho$ is surjective and thus an isomorphism, as claimed. The claim about the quotient follows as in the proof of Theorem \ref{teor:PID}.

\ref{prop:princ-gen:Ddiv} is proved similarly: $\rho$ is well-defined and injective, $\sharp$ is defined in the same way and the only problem is showing that $\sharp$ is order-preserving, with the only non-trivial case being when  $I\in\insfracid_p(D,T)$ and $J\in\mathcal{N}$. Suppose we are in this case, and without loss of generality suppose $IT=T$. Then, $I^\sharp=I^\star\subseteq(D:_T(D:_TI))$; since $IT=T$, we have
\begin{equation*}
(D:_KI)\subseteq(D:_KI)T\subseteq(T:_KIT)=T,
\end{equation*}
and so $(D:_KI)=(D:_TI)$. Therefore,
\begin{equation*}
(D:_T(D:_TI))=(D:_T(D:_KI))\subseteq(D:_K(D:_KI))=I^v.
\end{equation*}
However, as $J\in\mathcal{N}$, we have $J=J^v\supseteq I^v$; thus, $I^\sharp\subseteq J=J^\sharp$, and so $\sharp$ is order-preserving. The fact that $\sharp$ is multiplicative follows immediately by the multiplicativity of $\sharp$ on $\insfracid_p(D,T)$ and by the fact that $(I:b)=b^{-1}I\in\mathcal{N}$ if and only if $I\in\mathcal{N}$. Thus, $\rho$ is an isomorphism, as claimed. The part about the quotient follows as in the proof of Theorem \ref{teor:PID}.
\end{proof}

Theorem \ref{teor:PID} and Proposition \ref{prop:princ-gen} can be seen as generalizations of some already known results. 

Recall that a \emph{conductive domain} is an integral domain $D$ such that $(D:T)\neq(0)$ for all overrings $T$ of $D$ different from the quotient field of $D$. If $D$ is conductive and seminormal, then by \cite[Proposition 2.12(i)]{conductivedomains} $D$ is the pullback of the diagram
\begin{equation*}
\begin{tikzcd}
& A\arrow{d}\\
V\arrow[two heads]{r}{\phi} & K,
\end{tikzcd}
\end{equation*}
where $V$ is a valuation overring of $D$ and $K$ is the residue field of $V$.

The ``in particular'' statement of the following proposition is part of \cite[Theorem 2.5]{hmp_finite}.
\begin{prop}\label{prop:conductive}
Let $D,V,K,A$ be as above. Then, there is an isomorphism between $\insstar(D)$ and $\{\star\in\insmult_0(A,K)\mid D=D^\star\}$. In particular, if $K$ is the quotient field of $A$ then there is an isomorphism between $\insstar(D)$ and $\{\star\in\inssemistar(A)\mid A=A^\star\}$.
\end{prop}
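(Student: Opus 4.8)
The plan is to derive this from Proposition~\ref{prop:princ-gen}(b), applied to the overring $T:=V$. The degenerate cases $A=K$ (where $D=V$ is a valuation domain) and $V=\widetilde{K}$, the quotient field of $D$ (where $D=A$ and $K$ is its quotient field), are immediate, so I assume $A\neq K$ and $V\subsetneq\widetilde{K}$; then $D$ is not a field, so conductivity gives $(D:V)\neq(0)$. Moreover $\mathfrak{m}_V=\ker\phi\subseteq\phi^{-1}(A)=D$, and if $x\in(D:V)$ then $x\in V$ and $\phi(x)K=\phi(xV)\subseteq A$, which forces $\phi(x)=0$ because $A\neq K$; hence $(D:V)=\mathfrak{m}_V$. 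In particular $V$ is a fractional ideal of $D$, and $\phi$ induces an isomorphism between the extensions $D/\mathfrak{m}_V\subseteq V/\mathfrak{m}_V$ and $A\subseteq K$.

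The crux will be to check the hypothesis of Proposition~\ref{prop:princ-gen}(b): every fractional ideal $I$ of $D$ with $I\notin\insfracid_p(D,V)$ — i.e.\ with $IV$ not principal over $V$ — is divisorial over $D$. First, such an $I$ is automatically a $V$-module: since $IV=\bigcup_{a\in I}aV$, for $b\in IV$ the value $v(b)$ lies in the value set of $I$, which has no least element (this is precisely non-principality of $IV$), so there is $a\in I$ with $v(a)<v(b)$; then $b/a\in\mathfrak{m}_V\subseteq D$, whence $b=a(b/a)\in aD\subseteq I$, so $I=IV$. Second, for such a $V$-module $I$ one has $(D:_{\widetilde{K}}I)=(V:_{\widetilde{K}}I)$, because $xI\subseteq V$ implies $xI\neq V$ (a nonzero $\widetilde{K}$-multiple of $I$ is again non-principal over $V$) and hence $xI\subseteq\mathfrak{m}_V\subseteq D$. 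Since $(V:_{\widetilde{K}}I)$ is again a $V$-module, computing $(D:_{\widetilde{K}}(D:_{\widetilde{K}}I))=(D:_{\widetilde{K}}(V:_{\widetilde{K}}I))$ and distinguishing whether $(V:_{\widetilde{K}}I)$ is principal over $V$ or not — equivalently, whether $\inf\{v(a)\mid a\in I\}$ lies in the value group of $V$ — one obtains $(D:_{\widetilde{K}}(D:_{\widetilde{K}}I))=I$ in both cases, the principal case using the identity $(D:_{\widetilde{K}}V)=\mathfrak{m}_V$. I expect this divisoriality computation to be the only part requiring genuine work; it rests only on the elementary structure of fractional ideals of a valuation domain and on that identity.

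Once the hypothesis is verified, Proposition~\ref{prop:princ-gen}(b) yields order isomorphisms between $\insstar(D)$, $\{\star\in\insmult_0(D,V)\mid D=D^\star\}$ and $\{\star\in\insmult_0(D/\mathfrak{m}_V,V/\mathfrak{m}_V)\mid (D/\mathfrak{m}_V)^\star=D/\mathfrak{m}_V\}$; transporting the last set along the isomorphism between $D/\mathfrak{m}_V\subseteq V/\mathfrak{m}_V$ and $A\subseteq K$ turns it into $\{\star\in\insmult_0(A,K)\mid A=A^\star\}$, which is the first assertion. For the ``in particular'' statement, when $K$ is the quotient field of $A$ the set $\insfracid_0(A,K)$ is the set of all nonzero $A$-submodules of $K$, so $\insmult_0(A,K)=\inssemistar(A)$, and the condition $A=A^\star$ passes through the isomorphism; this gives $\insstar(D)\simeq\{\star\in\inssemistar(A)\mid A=A^\star\}$. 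Everything outside the divisoriality check is formal bookkeeping on top of Proposition~\ref{prop:princ-gen} (and Corollary~\ref{cor:quoz}, which is built into it).
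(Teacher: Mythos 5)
Your proof is correct and follows essentially the same route as the paper: both reduce to Proposition~\ref{prop:princ-gen}\ref{prop:princ-gen:Ddiv} with $T=V$, verifying that every $I\notin\insfracid_p(D,V)$ is $D$-divisorial from the identity $\mathfrak{m}_V=(D:V)$ and the observation that non-principality of $IV$ forces $\valut(I)$ to have no least element. The paper proves divisoriality directly via $I=\bigcap\{aD\mid I\subseteq aD\}$, whereas you pass through $I=IV$ and $(D:I)=(V:I)$, but this is the same underlying computation packaged slightly differently.
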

\begin{proof}
Let $\valut$ be the valuation relative to $V$. Take an $I\in\insfracid(D)\setminus\insfracid_p(D,V)$: we claim that $I=\bigcap\{aD\mid I\subseteq aD\}$. Since $IV$ is not principal, $\valut(I)$ has no infimum in the value group of $V$. Let $x$ be in the intersection, and suppose there is an $i\in I$ such that $\valut(i)\leq\valut(x)$. Then, there is a $j\in I$ such that $\valut(j)<\valut(x)$, and thus $\valut(xj^{-1})>0$, i.e., $xj^{-1}$ belongs to the maximal ideal of $V$, which by construction is $(D:V)$. Therefore, $x\in j(D:V)\subseteq jD\subseteq I$. Thus, $I$ is equal to the intersection and so it is $D$-divisorial. The claim now follows from Proposition \ref{prop:princ-gen}\ref{prop:princ-gen:Ddiv}.

The ``in particular'' statement follows since if $K$ is the quotient field of $A$ then $\inssemistar(A)=\insmult_0(A,K)$.
\end{proof}

Recall that a domain $D$ is a \emph{pseudo-valuation domain} (PVD) if there is a valuation overring $V$ of $D$ such that $(D:V)$ is the maximal ideal of $V$; in this case, $V$ is called the valuation overring \emph{associated} to $D$ \cite{pvd}. If $D$ is a PVD, $F$ is the residue field of $D$ and $L$ the residue field of $V$, the multiplicative operations on $(F,L,\insfracid_0(F,L))$ closing $F$ are exactly what in \cite[Section 3]{pvd-star} were called ``$F$-star operations on $L$'' (since $\insfracid_0(F,L)=\inssubmod_F(L)^\bullet$). Under this terminology, the following corollary is a different form of \cite[Theorem 3.1]{pvd-star}.
\begin{cor}
Let $D$ be a pseudo-valuation domain with associated valuation overring $V$, and let $F$ and $L$ be their respective residue field. Then, $\insstar(D)$ is isomorphic to the set of the $\star\in\insmult_0(F,L)$ such that $F=F^\star$.
\end{cor}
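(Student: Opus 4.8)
The plan is to deduce this corollary directly from Proposition \ref{prop:conductive}, so the main work is recognizing that a pseudo-valuation domain fits the ``conductive seminormal'' setup used there. Recall the hypotheses of Proposition \ref{prop:conductive}: we need a domain $D$, a valuation overring $V$ of $D$, the residue field $K$ of $V$, and a ring $A$ with $K$ a quotient of $A$, such that $D$ is the pullback of $A\to K\leftarrow V$. So the first step is to identify, for a PVD $D$ with associated valuation overring $V$, what plays the role of $A$ in the square.

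First I would recall the structure of a PVD: if $D$ is a PVD with associated valuation overring $V$, then $D$ and $V$ have the same maximal ideal, call it $\mathfrak{m}$, which equals $(D:V)$. Let $F=D/\mathfrak{m}$ and $L=V/\mathfrak{m}$; then $F\subseteq L$ is a field extension, and $D$ is exactly the pullback of the square
\begin{equation*}
\begin{tikzcd}
D\arrow{d}\arrow[hook]{r} & V\arrow{d}\\
F\arrow[hook]{r} & L,
\end{tikzcd}
\end{equation*}
i.e., $D=\phi^{-1}(F)$ where $\phi:V\to L$ is the residue map. This is the classical characterization of PVDs. In the language of Proposition \ref{prop:conductive}, this is the pullback diagram with ``$A$'' $=F$ and ``$K$'' $=L$; moreover $(D:V)=\mathfrak{m}$ is indeed the maximal ideal of $V$, so $V$ is a valuation overring and $L$ is its residue field as required. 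One also checks $D$ is conductive and seminormal (a PVD is seminormal, and $(D:T)\supseteq(D:V)=\mathfrak{m}\neq(0)$ for every overring $T\subsetneq$ quotient field), but since the hypotheses of Proposition \ref{prop:conductive} are phrased directly in terms of the pullback square rather than conductivity, it suffices to exhibit the square.

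Then the conclusion of Proposition \ref{prop:conductive} gives an isomorphism between $\insstar(D)$ and $\{\star\in\insmult_0(F,L)\mid D=D^\star\}$. The last remaining point is to translate the condition ``$D=D^\star$'' into ``$F=F^\star$'': under the correspondence built from the pullback (via Corollary \ref{cor:quoz} and the quotient $\phi:V\to L$ inside the chain of isomorphisms leading to Proposition \ref{prop:conductive}), the image of $D\in\insfracid(D)$ is $\phi(D)=D/\mathfrak{m}=F\in\inssubmod_F(L)$, and the isomorphisms preserve whether a given module is closed (this is exactly the observation used in the corollary right after Theorem \ref{teor:PID}). Hence $D=D^\star$ corresponds to $F=F^\star$, and we obtain that $\insstar(D)$ is isomorphic to $\{\star\in\insmult_0(F,L)\mid F=F^\star\}$, as claimed.

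I do not expect a serious obstacle here: the corollary is essentially a dictionary entry, and the only thing requiring a moment of care is spelling out the pullback square for a PVD and matching the objects $V,K,A$ of Proposition \ref{prop:conductive} with $V,L,F$ here — together with noting that $(D:V)$ being the maximal ideal of $V$ is precisely what makes $V$ the valuation overring appearing in the pullback. The tracking of the closed-element condition through the isomorphism is routine given that every isomorphism in the chain (Theorem \ref{teor:principali}, Corollary \ref{cor:quoz}, and the divisoriality argument of Proposition \ref{prop:princ-gen}\ref{prop:princ-gen:Ddiv}) is induced by honest restriction/quotient maps on modules and hence visibly respects the property $X=X^\star$.
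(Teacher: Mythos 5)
Your proof is correct and takes essentially the same route as the paper, which simply observes that a PVD is seminormal and conductive and invokes Proposition \ref{prop:conductive}; your more detailed version additionally exhibits the pullback square directly, which is indeed what that proposition actually uses. One small slip: the claimed containment $(D:T)\supseteq(D:V)$ for every proper overring $T$ reverses when $T\supsetneq V$ (there $(D:T)=(V:T)$, which is nonzero but properly inside $\mathfrak m$), so it doesn't establish conductivity by itself --- but as you point out, this is immaterial once the pullback square is in hand. Your reading of the closed-element condition in Proposition \ref{prop:conductive} as ``$A=A^\star$'' (hence ``$F=F^\star$'') and the observation that the chain of isomorphisms through Theorem \ref{teor:principali}, Corollary \ref{cor:quoz} and Proposition \ref{prop:princ-gen} sends $D$ to $\phi(D)=F$ is exactly the right translation and matches what the paper implicitly relies on.
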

\begin{proof}
A pseudo-valuation domain is seminormal and conductive, and thus we can apply Proposition \ref{prop:conductive}.
\end{proof}

Another consequence is the case of Pr\"ufer domains, which was treated in \cite[Theorem 2.5]{hmp_finite}.
\begin{cor}
Let $D$ be a Pr\"ufer domain that is not a valuation domain, and let $P\subseteq\Jac(D)$ be a nonzero prime ideal. Then, there is an isomorphism between between $\insstar(D)$ and $\inssmstar(D/P)$, where $\inssmstar(D/P)$ is the set of semistar operations on $D/P$ closing $D/P$.
\end{cor}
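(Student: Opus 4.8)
The plan is to put $D$ into the pullback picture of Proposition~\ref{prop:conductive} by choosing the valuation overring $V:=D_P$. First I would recall that, $D$ being a Pr\"ufer domain, $V=D_P$ is a valuation overring of $D$; write $\mathfrak m_V$ for its maximal ideal. The key preliminary fact is that $P$ is a \emph{divided} prime of $D$, i.e.\ $PD_P=P$. To see this, take $s\in D\setminus P$ and a maximal ideal $M$ of $D$; since $P\subseteq\Jac(D)$ we have $P\subseteq M$, hence $s\notin M$, and $PD_M$ is a prime of the valuation domain $D_M$ with $s\notin PD_M$ (because $PD_M\cap D=P$), so $PD_M\subseteq sD_M$. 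As containments of submodules of $K$ can be tested at all maximal ideals, this gives $P\subseteq sD$ for every $s\in D\setminus P$, whence $PD_P=\bigcup_{s\notin P}s^{-1}P\subseteq D$ and thus $PD_P=P$; in particular $\mathfrak m_V=PD_P=P$.

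\textbf{Applying the conductive picture.} Next I would compute the conductor: $PV=P\subseteq D$ gives $P\subseteq(D:V)$, while $(D:V)$ is a $V$-submodule of $D$, hence an ideal of $V$, and it is proper because $D\neq V$ (as $D$ is not a valuation domain), so $(D:V)\subseteq\mathfrak m_V=P$. Therefore $(D:V)=P\neq(0)$, so $V$ is a fractional ideal of $D$ and its maximal ideal coincides with $(D:V)$. Since $D$ is integrally closed, hence seminormal, the triple $D$, $V=D_P$, $A:=D/P$ is in the situation of Proposition~\ref{prop:conductive} with residue field $K:=V/\mathfrak m_V=D_P/PD_P$; indeed $D$ is the pullback of $V\twoheadrightarrow K$ along $A\hookrightarrow K$, because $D/P\hookrightarrow D_P/PD_P$ (as $PD_P\cap D=P$) and $\{x\in D_P\mid x+PD_P\in D/P\}=D+P=D$. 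Moreover $K=D_P/PD_P$ is the localization of $D/P$ at its nonzero elements, i.e.\ the quotient field of $A=D/P$. Hence the ``in particular'' clause of Proposition~\ref{prop:conductive} yields an order isomorphism between $\insstar(D)$ and $\{\star\in\inssemistar(D/P)\mid (D/P)^\star=D/P\}$, which is exactly $\inssmstar(D/P)$ as defined in the statement.

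\textbf{Main obstacle.} The delicate points are the divided-ness of $P$ together with the conductor computation — this is precisely where the hypothesis $P\subseteq\Jac(D)$ is used and what makes $(D:V)\neq(0)$ — and, at a more foundational level, making sure Proposition~\ref{prop:conductive} genuinely applies. Its statement assumes $D$ conductive, which in our generality need \emph{not} hold; however its proof uses only that $V$ is a valuation overring with $\mathfrak m_V=(D:V)\neq(0)$, namely to conclude via Proposition~\ref{prop:princ-gen}\ref{prop:princ-gen:Ddiv} after checking that every fractional ideal outside $\insfracid_p(D,V)$ is $D$-divisorial. So, to be safe, I would either verify the pullback structure above (which lets one quote Proposition~\ref{prop:conductive} as stated) or, more economically, reproduce that short $D$-divisoriality argument directly: for $I$ with $IV$ non-principal and $x\in\bigcap\{aD\mid I\subseteq aD\}$, picking $j\in I$ with $\val(j)<\val(x)$ gives $xj^{-1}\in\mathfrak m_V=(D:V)\subseteq D$, hence $x\in jD\subseteq I$, using $\mathfrak m_V=P$ at the crucial step. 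The remaining verifications (seminormality, $K=\mathrm{Frac}(A)$, order-preservation) are routine.
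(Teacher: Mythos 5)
Your proof is correct and takes the same route the paper intends, applying Proposition~\ref{prop:conductive} with $V=D_P$, and you correctly identify and address the subtlety that $D$ need not be conductive by verifying directly that $\mathfrak m_V=P=PD_P=(D:V)$, that $D$ is the pullback of $D_P\twoheadrightarrow D_P/PD_P$ along $D/P\hookrightarrow D_P/PD_P$, and that $D_P/PD_P$ is the quotient field of $D/P$, which is all the proof of Proposition~\ref{prop:conductive} actually needs. One stray clause should be deleted: ``hence $s\notin M$'' does not follow from $s\notin P$ and $P\subseteq M$ (and is not needed) --- the actual point, as you then say, is that $s\notin PD_M$ because $PD_M\cap D=P$, so total comparability of ideals in the valuation domain $D_M$ forces $PD_M\subseteq sD_M$.
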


The case of fractional star operations is slightly more delicate (since $P$ and $D_P$ may not be closed by every such closure), but can be treated similarly. See \cite[Proposition 2.2]{starloc2} for a description.

\section{Artinian extensions}\label{sect:Artin}
In this section, we study the consequences of the previous results on one-dimensional Noetherian domains. In view of \cite[Theorem 2.3]{houston_noeth-starfinite} and \cite[Theorem 5.4]{starloc}, it is not reductive to consider only local domains.

Suppose thus that $D$ is a local one-dimensional Noetherian domain, and suppose furthermore that its integral closure $T$ is finite over $D$ (i.e., $R$ is analytically unramified). Then, $T$ is a principal ideal domain and $(D:T)\neq(0)$: by Corollary \ref{cor:isoext} the sets $\insstar(D)$ and $\insfstar(D)$ depend only on the extension $D/(D:T)\subseteq T/(D:T)$, which is an extension of Artinian rings, with $A:=D/(D:T)$ local and $B:=T/(D:T)$ a principal ideal ring. In particular, $B$ can be written as a product $B_1\times\cdots\times B_t$, where each $B_i$ is an Artinian local ring that is also an $A$-algebra and a principal ideal ring. 

\begin{ex}\label{ex:DTmD}
Suppose $A=k$ is a field; this corresponds to the case where $(D:T)$ is the maximal ideal $\mathfrak{m}_D$ of $D$. Then, each $B_i$ is a $k$-algebra; furthermore, by \cite[Theorem 8]{struct-PIR}, $B_i$ is isomorphic to $L_i[[X]]/(X^{e_i})\simeq L_i[X]/(X^{e_i})$, where $L_i$ is the residue field of $B_i$ and $e_i\geq 1$ is a natural number. In particular, setting $f_i:=[L_i:k]$, we have
\begin{equation*}
\ell_D(D/(D:T))=\ell_A(B)=\sum_{i=1}^te_if_i,
\end{equation*}
and thus, if $n:=\ell_A(B)$ is fixed, there are at most a finite number of choices for $e_i$ and $f_i$. Moreover, if $k$ is finite, then $f_i$ uniquely determines $L_i$; hence, there are only finitely many possibilities for $B$, and thus finitely many possibilities for $\insstar(D)$ and $\insfstar(D)\simeq\insmult_0(k,B)$.

For example, if $n=3$, we have only five possible cases:
\begin{itemize}
\item $t=1$:
\begin{itemize}
\item $e=1$, $f=3$;
\item $e=3$, $f=1$;
\end{itemize}
\item $t=2$:
\begin{itemize}
\item $e_1=1$, $f_1=2$; $e_2=f_2=1$;
\item $e_1=2$, $f_1=1$; $e_2=f_2=1$;
\end{itemize}
\item $t=3$:
\begin{itemize}
\item $e_i=f_i=1$ for $i\in\{1,2,3\}$.
\end{itemize}
\end{itemize}
Those cases correspond exactly to the five cases of \cite[Theorem 3.1]{houston_noeth-starfinite}: more precisely, they correspond, respectively, to points (3), (4), (5), (6) and (2) of the theorem. In particular, our method gives an ``high-level'' justification for the fact that the study of star operations splits into these cases.
\end{ex}

The case $(D:T)=\mathfrak{m}_D$ will be studied in more detail in \cite{asymptotics-star}.

Example \ref{ex:DTmD} can be generalized. If $n,q<\infty$, we denote by $\mathcal{C}(n,q)$ the set of domains $D$ such that:
\begin{itemize}
\item $D$ is one-dimensional, local and Noetherian;
\item the residue field of $D$ has cardinality $q$;
\item the integral closure $T$ of $D$ is finite over $D$ and $\ell_D(T/(D:T))=n$.
\end{itemize}

\begin{prop}\label{prop:finposs}
Fix $n,q<\infty$. Then, there are only finitely many possibilities for $\insstar(D)$ and $\insfstar(D)$, as $D$ ranges in $\mathcal{C}(n,q)$.
\end{prop}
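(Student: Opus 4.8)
The plan is to reduce the statement to a soft finiteness fact about finite rings. Fix $D\in\mathcal{C}(n,q)$, let $T$ be its integral closure, set $I:=(D:T)$, and put $A:=D/I$ and $B:=T/I$. As recalled at the start of this section, $T$ is a principal ideal domain and $I\neq(0)$, so Corollary \ref{cor:isoext} (for $\insstar$) together with Theorem \ref{teor:PID} (which gives $\insfstar(D)\simeq\insmult_0(A,B)$) show that both $\insstar(D)$ and $\insfstar(D)$ are determined, up to order isomorphism, by the ring extension $A\subseteq B$ alone. Hence it suffices to bound the number of isomorphism classes of extensions $A\subseteq B$ that can occur for $D$ ranging in $\mathcal{C}(n,q)$.

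Here $A$ is a local Artinian ring whose residue field is the residue field of $D$, of cardinality $q$, and $B=T/I$ is an $A$-algebra with $\ell_A(B)=\ell_D(T/I)=n$, the two lengths agreeing because $T/I$ is annihilated by $I$. The key numerical observations: the inclusion $A\hookrightarrow B$ is $A$-linear, so $\ell_A(A)\le\ell_A(B)=n$; and since $A$ is local Artinian its only simple module is $A/\mathfrak{m}_A$, so every composition factor of a finite-length $A$-module is a copy of that residue field. Therefore $|A|=q^{\ell_A(A)}\le q^n$ and $|B|=q^{\ell_A(B)}=q^n$. (The degenerate case $n=0$ forces $T=D$, so $D$ is a DVR and $\insstar(D)$, $\insfstar(D)$ are trivial; from now on $n\ge 1$, which also guarantees $I\subseteq\mathfrak{m}_D$, so that $A$ is not the zero ring and has residue field $D/\mathfrak{m}_D$.)

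Now the counting. Up to isomorphism there are only finitely many commutative unital rings of cardinality $q^n$, since a ring structure on a fixed finite set is a finite piece of data; each of these finitely many rings has only finitely many subrings. Consequently there are only finitely many isomorphism classes of pairs consisting of a ring $B$ of cardinality $q^n$ together with a subring $A$, and \emph{a fortiori} only finitely many isomorphism classes of extensions $A\subseteq B$ arising from domains in $\mathcal{C}(n,q)$. Combining this with the first paragraph, $\insstar(D)$ and $\insfstar(D)$ take only finitely many values as $D$ ranges over $\mathcal{C}(n,q)$.

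The only step requiring genuine care is the length-and-residue-field bookkeeping in the second paragraph: checking that $I\subseteq\mathfrak{m}_D$ (equivalently $T\neq D$, which holds once $n\ge 1$, since $I=D$ would force $T=D$), that $\ell_D(T/I)=\ell_A(B)$, and that all composition factors of $B$ as an $A$-module are copies of the residue field of cardinality $q$. Once these are in place, the remainder is a counting argument that needs no control whatsoever on the number $t$ of local factors of $B$ or on their invariants $e_i,f_i$, beyond the relation $\sum_i e_if_i=n$ already recorded in Example \ref{ex:DTmD}.
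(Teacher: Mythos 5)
Your proof is correct and follows essentially the same route as the paper: reduce via Theorem \ref{teor:PID} and Corollary \ref{cor:isoext} to counting isomorphism classes of extensions $A\subseteq B$ of finite rings, then bound $|A|$ and $|B|$ using lengths over the local Artinian ring $A$ with residue field of cardinality $q$. The only (immaterial) difference is that you observe $\ell_A(B)=n$ directly and so get $|B|=q^n$, whereas the paper uses the slightly looser bound $\ell_A(B)\leq 2n$, i.e.\ $|B|\leq q^{2n}$; the paper is also silent on the trivial case $n=0$, which you dispatch explicitly.
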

\begin{proof}
Take any $D\in\mathcal{C}(n,q)$ and let $A:=D/(D:T)$ and $B:=T/(D:T)$. By Theorem \ref{teor:PID} and Corollary \ref{cor:isoext}, the sets $\insstar(D)$ and $\insfstar(D)$ depend only on $A\subseteq B$; hence, it is enough to show that there are only finitely many possibilities for the extension $A\subseteq B$.

Since $\ell_A(B)=\ell_D(T/(D:T))$, we have $\ell_A(A)\leq n$: hence, $\ell_A(A)\leq n$ and thus $|A|\leq q^n$, so that there are only finitely many possible structures for $A$. Likewise, $\ell_A(B)=\ell_A(A)+\ell_A(B/A)\leq 2n$, and thus $|B|\leq q^{2n}$; hence, there are only finitely many possible extensions $A\subseteq B$. The claim is proved.
\end{proof}

We can interpret Proposition \ref{prop:finposs} by saying that, once $n$ and $q$ are fixed, we can find a finite family of integral domains that ``represents'' all one-dimensional local domains with $|D/\mathfrak{m}_D|=q$ and $\ell_D(T/(D:T))=n$, in the sense that for any such domain $D$ the sets $\insstar(D)$ and $\insfstar(D)$ are isomorphic to $\insstar(T)$ and $\insfstar(T)$ for some member $T$ of the family. We now want to show that, under some hypothesis, these members can be taken to be close to a polynomial ring.

\begin{lemma}\label{lemma:constructPID}
Let $k$ be a field and let $L_1,\ldots,L_t$ be finite algebraic extensions of $k$ (not necessarily distinct). Then, the following hold.
\begin{enumerate}[(a)]
\item\label{lemma:constructPID:gen} There are an integer $m$ and a principal ideal domain $T$ that is an overring of $k[X_1,\ldots,X_m]$ such that $T$ has $t$ maximal ideals, and the residue fields of $T$ are exactly $L_1,\ldots,L_t$.
\item\label{lemma:constructPID:simple} If each $L_i$ is a simple extension of $k$, then $T$ can be taken to be a localization of $k[X_1,\ldots,X_m]$.
\item\label{lemma:constructPID:inf} If each $L_i$ is a simple extension of $k$ and $k$ is infinite, we can take $m=1$.
\end{enumerate}
\end{lemma}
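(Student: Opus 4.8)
The plan is to realize $T$ as a finite intersection of discrete valuation rings lying between a polynomial ring over $k$ and its field of fractions, using the classical fact that a finite intersection of pairwise incomparable discrete valuation rings with a common fraction field is a semilocal Dedekind domain — hence a principal ideal domain — whose maximal ideals are the centres of the given valuations and whose residue fields are their residue fields. The technical core of \ref{lemma:constructPID:gen} is, for a single finite extension $L/k$, to produce a DVR $V$ with $k[X_1,\dots,X_m]\subseteq V\subseteq k(X_1,\dots,X_m)$ and residue field \emph{exactly} $L$. Writing $L=k[\gamma_1,\dots,\gamma_d]$ and taking $m=d+1$, I would define a $k$-algebra homomorphism $\theta\colon k[X_1,\dots,X_{d+1}]\to L[[Z]]$ by $X_{d+1}\mapsto Z$ and $X_j\mapsto\gamma_j+\psi_j(Z)$ for $j\le d$, where the $\psi_j\in Z\cdot L[[Z]]$ are chosen successively transcendental over $L(Z)$, over $L(Z,\psi_1)$, and so on — possible since $L((Z))$ has infinite transcendence degree over $L(Z)$. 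Then $\theta$ sends the variables to algebraically independent elements, hence is injective, so pulling back the $Z$-adic valuation of $L[[Z]]$ along the induced embedding $k(X_1,\dots,X_{d+1})\hookrightarrow L((Z))$ yields a discrete valuation $v$ with $k[X_1,\dots,X_{d+1}]\subseteq\mathcal O_v$; its residue field embeds into $L[[Z]]/(Z)=L$ and contains the image of $k[X_1,\dots,X_{d+1}]$, which is $k[\gamma_1,\dots,\gamma_d]=L$ because each $\psi_j$ has zero constant term, so the residue field is exactly $L$.

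To finish \ref{lemma:constructPID:gen}, I would fix $m$ large (at least $1+\max_i d_i$, where $L_i=k[\gamma_{i,1},\dots,\gamma_{i,d_i}]$) and, for each $i$, run the construction above inside $k(X_1,\dots,X_m)$, sending the unused variables to further power series in $Z\cdot L_i[[Z]]$ (which does not enlarge the residue field), to get a DVR $V_i$ with residue field $L_i$. Its centre $\mathfrak M_i:=V_i\cap k[X_1,\dots,X_m]=\ker\bigl(k[X_1,\dots,X_m]\twoheadrightarrow L_i\bigr)$ is the maximal ideal attached to the chosen point; after translating the generators by scalars (and enlarging $m$ in the finite-field case, so that $\mathbb A^m_k$ has enough closed points with each prescribed residue field) these centres can be made pairwise distinct, hence the $V_i$ pairwise incomparable. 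Then $T:=V_1\cap\dots\cap V_t$ is the desired semilocal PID: it has exactly $t$ maximal ideals $\mathfrak m_i$, with $T/\mathfrak m_i\cong V_i/\mathfrak m_{V_i}\cong L_i$, and $k[X_1,\dots,X_m]\subseteq V_i$ for all $i$ gives $k[X_1,\dots,X_m]\subseteq T\subseteq k(X_1,\dots,X_m)$.

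For \ref{lemma:constructPID:simple} and \ref{lemma:constructPID:inf} the point is that a simple extension is already realized in one variable: for each $i$ pick an irreducible $p_i\in k[X]$ with $k[X]/(p_i)\cong L_i$ (the minimal polynomial of a primitive element), taken pairwise non-associate, and set $T:=S^{-1}k[X]$ with $S:=k[X]\setminus\bigcup_i(p_i)$; this is a localization of $k[X_1]=k[X]$, a semilocal PID with maximal ideals $(p_i)T$ and residue fields $L_i$. Choosing the $p_i$ pairwise non-associate requires, for each isomorphism class $M$ occurring among the $L_i$, enough non-associate irreducibles with residue field $\cong M$ (polynomials with non-isomorphic residue fields being automatically non-associate); when $k$ is infinite this is free, since for $M=k(\gamma)$ the translates $\gamma+c$ $(c\in k)$ are primitive and the assignment ``primitive element $\mapsto$ minimal polynomial'' has fibres of size $\le[M:k]$, producing infinitely many suitable $p$'s — so \ref{lemma:constructPID:inf} holds with $m=1$, and \ref{lemma:constructPID:simple} follows by the same construction.

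I expect the main obstacle to be \ref{lemma:constructPID:gen}: constructing a discrete valuation of a \emph{purely transcendental} extension $k(X_1,\dots,X_m)$ whose residue field is a prescribed finite — possibly inseparable and non-simple — extension of $k$. Such a valuation is necessarily non-divisorial, and controlling its residue field to be exactly $L_i$ (rather than some proper or improper overfield) is precisely what the power-series recipe is designed to do; once these valuation rings are available, the remainder reduces to the classical structure theory of finite families of independent discrete valuations.
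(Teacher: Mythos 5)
Your proof of part (a) takes a genuinely different route from the paper's. The paper invokes Heitmann's iterative construction of principal ideal overrings of polynomial rings with prescribed residue fields: starting from a localization $T_1$ of $k[X]$, it builds a chain $T_1\subseteq T_2\subseteq\cdots$ in which each $T_i$ is a PID overring of $T_{i-1}[X_i]$, adding one maximal ideal or enlarging one residue field by a simple extension at each step. You instead build individual discrete valuation rings of $k(X_1,\dots,X_m)$ by pulling back the $Z$-adic valuation along embeddings into $L_i((Z))$ obtained by sending the variables to power series with transcendental perturbations, and then intersect the resulting $V_i$; the residue-field computation is clean because each $\psi_j$ has zero constant term, and the intersection of pairwise distinct (hence pairwise incomparable) DVRs with common fraction field is a semilocal Dedekind domain, hence a PID. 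Both approaches are sound; yours is more self-contained, the paper's offloads the hard work to an existing theorem. Part (c) you handle essentially as the paper does, via translates of a primitive element.

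There is, however, a gap in your treatment of part (b). You correctly isolate the requirement — for each isomorphism class $M$ occurring among the $L_i$ you need enough pairwise non-associate irreducibles in $k[X]$ with residue field $M$ — and you verify it when $k$ is infinite, which gives (c). But you then assert that (b) "follows by the same construction" without saying what to do when $k$ is finite, and for finite $k$ this condition can fail: over $k=\mathbb{F}_2$ there is a single monic irreducible quadratic, so if $L_1=L_2=\mathbb{F}_4$ there are not two non-associate $p_i$'s in $k[X]$. Adding more variables, which rescues your construction in part (a), does not rescue a \emph{localization}: a multiplicative localization of $k[X_1,\dots,X_m]$ that is a semilocal PID has as its maximal ideals height-one primes of the polynomial ring, and for $m\geq 2$ these have residue fields of transcendence degree $m-1\geq 1$ over $k$, not finite extensions. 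So the finite-field case of (b) is a genuine obstruction that your argument does not address, and the phrase "by the same construction" does not close it. (The paper's own remark following the lemma flags that $m>1$ is sometimes forced over finite fields; its stated proof of (b) is also quite terse here, but the point is that you have not supplied an argument that covers this case.)
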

\begin{proof}
\ref{lemma:constructPID:gen} Let $N_1,\ldots,N_t$ be prime ideals of $k[X]$ with residue field $k$, and let $T_1:=S^{-1}k[X]$ with $S:=k[X]\setminus\bigcup_iN_i$. Using repeatedly the construction in \cite[Section 2]{heitmann-PID}, we can construct domains $T_1\subseteq T_2\subseteq\cdots$, such that, for $i>1$, $T_i$ is a principal ideal overring of $T_{i-1}[X_i]$ (and so of $k[X_1,\ldots,X_i]$) and such that one of the following holds: if the residue fields of $T_{i-1}$ are $F_1,\ldots,F_r$, then the residue fields of $T_i$ are either $F_1,\ldots,F_r,F_i$ (for some $1\leq i\leq r$) or $F_1,\ldots,F'_i,\ldots,F_r$, with $F'_i$ being a simple extension of $F_i$. In particular, after finitely many steps the residue fields will be $L_1,\ldots,L_n$, as claimed.

\ref{lemma:constructPID:simple} For each $i$, let $p_i(X)\in k[X]$ be a polynomial whose splitting field is $L_i$; then, the requested $T$ will be $S^{-1}k[X_1,\ldots,X_t]$, where $S$ is the multiplicatively closed set $k[X]\setminus\bigcup_i(p_i(X_i))$. 

\ref{lemma:constructPID:inf} For a given $L\in\{L_1,\ldots,L_t\}$, there are infinitely many irreducible monic polynomials in $k[X]$ whose splitting field is $L$: since $L$ is simple there will be one, say $p(X)$, and the others will be those in the form $p(X+s)$ for $s\in k$ (since $p(X+s)=p(X+s')$ can happen only finitely many times for each $s$). Hence, for every $L$ there are are infinitely many maximal ideals of $k[X]$ with residue field $L$. In particular, we can take distinct maximal ideals $M_1,\ldots,M_n$ of $k[X]$ such that $k[X]/M_i\simeq L_i$; the requested $T$ will be $S^{-1}k[X]$, where $S:=k[X]\setminus\bigcup_iM_i$.
\end{proof}

\begin{oss}
The integer $m$ obtained in the proof of Lemma \ref{lemma:constructPID} is not tight: for example, it is possible than $k[X]$ has already $L_1,\ldots,L_t$ as residue fields even if $k$ is finite (e.g., if each $L_i$ appears only once, or more generally if it does not appear too many times). If each $L_i$ is actually $k$, an application of the methods of \cite[Section 2]{heitmann-PID} gives an upper bound $m\leq 1+\log_{|k|}t$ (or $m=1$ if $k$ is infinite, as in part \ref{lemma:constructPID:inf} of the lemma).
\end{oss}

\begin{prop}\label{prop:stdform}
Let $D$ be a one-dimensional local Noetherian domain with residue field $k$ of characteristic $p$; let $T$ be the integral closure of $D$ and let $n:=|\Max(T)|$. Suppose that $(D:T)\neq(0)$ and that $p\in(D:T)$. Then, the following hold.
\begin{enumerate}[(a)]
\item\label{prop:stdform:perf} If $k$ is perfect and infinite, then there are a localization $T'$ of $k[X]$ and a one-dimensional local Noetherian domain $R\subseteq T'$, with integral closure $T'$, such that $\insstar(D)\simeq\insstar(R)$.
\item\label{prop:stdform:gen} There are an integer $m$, a principal ideal overring $T'$ of $k[X_1,\ldots,X_m]$ and a one-dimensional local Noetherian domain $R\subseteq T'$, with integral closure $T'$, such that $\insstar(D)\simeq\insstar(R)$.
\item\label{prop:stdform:pow} If $T$ is local with residue field $L$, there is an integral domain $R\subseteq L[[X]]$, with integral closure $L[[X]]$, such that $\insstar(D)\simeq\insstar(R)$.
\end{enumerate}
\end{prop}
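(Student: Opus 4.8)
The plan is to treat the three parts uniformly: reduce to the Artinian extension $A\subseteq B$ with $A:=D/(D:T)$ and $B:=T/(D:T)$, realize an isomorphic copy of this extension inside a well-chosen principal ideal domain $T'$, and conclude via Corollary~\ref{cor:isoext}. For the reduction, note first that, since $(D:T)\neq(0)$ and $D$ is Noetherian, $T$ is module-finite over $D$; being the integral closure of a one-dimensional Noetherian domain, $T$ is then a semilocal Dedekind domain, hence a principal ideal domain, and its residue fields are finite extensions of $k$. Put $I:=(D:T)$, $A:=D/I$, $B:=T/I$; these are Artinian, $A$ is local with residue field $k$, $B$ is a principal ideal ring finite over $A$, and, because $p\in I$, all of them have characteristic $p$. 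Writing $B\cong B_1\times\cdots\times B_t$ as a product of local rings, each $B_i$ is an equicharacteristic Artinian local principal ideal ring with residue field $L_i$ a finite extension of $k$, so by Cohen's structure theorem (exactly as in Example~\ref{ex:DTmD}) $B_i$ contains a coefficient field isomorphic to $L_i$ and hence $B_i\cong L_i[X]/(X^{e_i})$ for some $e_i\geq1$. The one delicate input I will use from the hypotheses is that the conductor $(A:_BB)$ is zero: a lift of one of its elements is an $x\in D$ with $xT\subseteq D$, i.e.\ $x\in I$, so it vanishes in $A$. (If $D=T$ the statements are trivial, so assume $D\subsetneq T$, whence $I\subsetneq D$ and $t\geq1$.)

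Next, the construction. Suppose $T'$ is a principal ideal domain of characteristic $p$ whose maximal ideals are exactly $M_1,\ldots,M_t$, with $T'/M_i\cong L_i$. Set $J:=M_1^{e_1}\cdots M_t^{e_t}$, a nonzero ideal of $T'$. Then $T'/J\cong\prod_i T'/M_i^{e_i}$, and each factor $T'/M_i^{e_i}$ is an equicharacteristic Artinian local principal ideal ring with residue field $L_i$, hence isomorphic to $L_i[X]/(X^{e_i})$, hence to $B_i$; fix a ring isomorphism $\theta\colon B\to T'/J$ and let $R:=\pi^{-1}(\theta(A))$, where $\pi\colon T'\to T'/J$ is the projection. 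Then $J\subseteq R\subseteq T'$, $R/J\cong A$ and $T'/J\cong B$ compatibly, and $T'/R\cong B/A$ has finite length over $R$, so $T'$ is module-finite over $R$; by the Eakin--Nagata theorem $R$ is Noetherian, it is one-dimensional, and since $J$ is a nonzero $T'$-ideal contained in $R$ we get $\mathrm{Frac}(R)=\mathrm{Frac}(T')$, so $T'$ is the integral closure of $R$. By lying-over every maximal ideal of $R$ is the contraction of some $M_i$, and these contractions coincide because $R/J\cong A$ is local, so $R$ is local. Finally $(R:_{T'}T')\supseteq J$, and the image of $(R:_{T'}T')$ in $B\cong T'/J$ is an ideal of $B$ contained in $A$, hence $0$; thus $(R:_{T'}T')=J$, so the extension $R/(R:T')\subseteq T'/(R:T')$ is isomorphic to $A\subseteq B=D/(D:T)\subseteq T/(D:T)$. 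Since both $T$ and $T'$ are principal ideal domains, Corollary~\ref{cor:isoext} then gives $\insstar(D)\simeq\insstar(R)$.

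It remains to choose $T'$ in each case. For~\ref{prop:stdform:gen}, Lemma~\ref{lemma:constructPID}\ref{lemma:constructPID:gen} furnishes an integer $m$ and a principal ideal domain $T'$ that is an overring of $k[X_1,\ldots,X_m]$ with residue fields exactly $L_1,\ldots,L_t$ (and $T'$ has characteristic $p$ as it contains $k$), so the construction applies directly. For~\ref{prop:stdform:perf}, perfectness of $k$ makes each $L_i/k$ separable, hence simple, and since $k$ is infinite Lemma~\ref{lemma:constructPID}\ref{lemma:constructPID:inf} lets us take $m=1$, i.e.\ $T'$ a localization of $k[X]$. For~\ref{prop:stdform:pow}, $T$ is local, so $t=1$ and $B\cong L[X]/(X^{e})$; here take $T':=L[[X]]$, a complete discrete valuation ring (hence a principal ideal domain) of characteristic $p$ with residue field $L$, and $J:=(X^{e})$, running the construction verbatim. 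I expect the main obstacle to be the final step of the construction, the identity $(R:T')=J$: one must ensure that pulling the extension back from $T'/J$ to $R$ does not enlarge the conductor and so does not collapse the extension, and this reduces precisely to the vanishing of $(A:_BB)$, i.e.\ to the fact that $I=(D:T)$ is itself a conductor. A secondary technical point is the appeal to Cohen's theorem together with the classification of equicharacteristic Artinian principal ideal rings needed to identify $T'/J$ with $B$.
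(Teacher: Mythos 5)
Your proof is correct and follows essentially the same route as the paper: reduce to the Artinian extension $A=D/(D:T)\subseteq B=T/(D:T)$, identify $B$ with $T'/J$ for a suitable PID $T'$ built via Lemma~\ref{lemma:constructPID} (or $L[[X]]$ in part (c)), form the pullback $R$, and conclude via Theorem~\ref{teor:PID}/Corollary~\ref{cor:isoext}. The one place you go beyond what the paper writes explicitly is the verification that $(R:_{T'}T')=J$ — deduced from $(A:_BB)=0$, which in turn holds because $(D:T)$ is the full conductor — and this is in fact a point the paper's proof tacitly assumes when it invokes Theorem~\ref{teor:PID} for $R$, so your extra step tightens rather than deviates from the argument.
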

\begin{proof}
Since $p\in(D:T)$, the quotient $D/(D:T)$ is a $k$-algebra, and thus so is $B:=T/(D:T)$; let $B=B_1\times\cdots\times B_n$. Then, also all the $B_i$ are $k$-algebras. By \cite[Theorem 8]{struct-PIR}, each $B_i$ is isomorphic to $L_i[[X]]/(X^{e_i})\simeq L_i[X]/(X^{e_i})$, where $L_i$ is the residue field of $B_i$ and the $e_i\geq 1$ are integers. Furthermore, if $B$ is local then $n=1$ and so $B\simeq L[[X]]/(X^e)$. Moreover, each $L_i$ is finite over $k$ since $T$ is finite over $D$.

By Lemma \ref{lemma:constructPID}, we can find a principal ideal domain $T'$ with residue fields $L_1,\ldots,L_n$ which is an overring of $k[X]$ (if $k$ is infinite and perfect) or of some $k[X_1,\ldots,X_m]$. Let $M_1,\ldots,M_n$ be the maximal ideals of $T'$, and let $I:=M_1^{e_1}\cdots M_n^{e_n}$: then, $T'/I\simeq B$. Consider the pullback
\begin{equation*}
\begin{tikzcd}
R\arrow[hook]{d}\arrow[two heads]{r}{\pi} & A\arrow[hook]{d}\\
T'\arrow[two heads]{r}{\pi} & B.
\end{tikzcd}
\end{equation*}
Since $A$ is local, also $R$ is local; furthermore, since $T'$ is a domain then $R$ too is a domain with the same quotient field of $T'$ \cite[Corollary 1.5(7)]{topologically-defined}. Moreover, since $B$ is finite over $A$ then $T'$ is finite over $R$ \cite[Corollary 1.5(4)]{topologically-defined}, and since $A$ and $B$ are Noetherian then $R$ is Noetherian too \cite[Proposition 1.8]{topologically-defined}. To summarize, $R$ is a one-dimensional local Noetherian domain with integral closure $T$. By Theorem \ref{teor:PID}, $\insfstar(R)\simeq\insmult_0(A,B)\simeq\insfstar(D)$, and likewise $\insstar(R)\simeq\insstar(D)$. The claim is proved.
\end{proof}

In the local case, this result allows to calculate $\insstar(D)$ through a fairly explicit domain $D$, with the advantage of working on integral domains (instead of rings with zero-divisors). For example, if the integral closure $T$ of $D$ is local and $(D:T)=\mathfrak{m}_D$, then we can calculate $\insstar(D)$ by considering instead the ring $F+X^nL[[X]]$, for some $n$ and some field extension $F\subseteq L$.

\bibliographystyle{plain}
\bibliography{/bib/miei,/bib/articoli,/bib/libri}

\begin{thebibliography}{10}

\bibitem{anderson_two_2000}
D.~D. Anderson and Sylvia~J. Cook.
\newblock Two star-operations and their induced lattices.
\newblock {\em Comm. Algebra}, 28(5):2461--2475, 2000.

\bibitem{conductivedomains}
David~E. Dobbs and Richard Fedder.
\newblock Conducive integral domains.
\newblock {\em J. Algebra}, 86(2):494--510, 1984.

\bibitem{elliott-libro}
Jesse Elliott.
\newblock {\em Rings, {M}odules, and {C}losure {O}perations}.
\newblock Springer Monographs in Mathematics. Springer, 2019.

\bibitem{epstein_guide}
Neil Epstein.
\newblock A guide to closure operations in commutative algebra.
\newblock In {\em Progress in commutative algebra 2}, pages 1--37. Walter de
  Gruyter, Berlin, 2012.

\bibitem{epstein-corresp}
Neil Epstein.
\newblock Semistar operations and standard closure operations.
\newblock {\em Comm. Algebra}, 43(1):325--336, 2015.

\bibitem{topologically-defined}
Marco Fontana.
\newblock Topologically defined classes of commutative rings.
\newblock {\em Ann. Mat. Pura Appl. (4)}, 123:331--355, 1980.

\bibitem{fontana-park}
Marco Fontana and Mi~Hee Park.
\newblock Star operations and pullbacks.
\newblock {\em J. Algebra}, 274(1):387--421, 2004.

\bibitem{gilmer}
Robert Gilmer.
\newblock {\em Multiplicative {I}deal {T}heory}.
\newblock Marcel Dekker Inc., New York, 1972.
\newblock Pure and Applied Mathematics, No. 12.

\bibitem{pvd}
John~R. Hedstrom and Evan~G. Houston.
\newblock Pseudo-valuation domains.
\newblock {\em Pacific J. Math.}, 75(1):137--147, 1978.

\bibitem{hhp_m-canonical}
William~J. Heinzer, James~A. Huckaba, and Ira~J. Papick.
\newblock {$m$}-canonical ideals in integral domains.
\newblock {\em Comm. Algebra}, 26(9):3021--3043, 1998.

\bibitem{heitmann-PID}
Raymond~C. Heitmann.
\newblock P{ID}'s with specified residue fields.
\newblock {\em Duke Math. J.}, 41:565--582, 1974.

\bibitem{houston_noeth-starfinite}
Evan~G. Houston, Abdeslam Mimouni, and Mi~Hee Park.
\newblock Noetherian domains which admit only finitely many star operations.
\newblock {\em J. Algebra}, 366:78--93, 2012.

\bibitem{hmp_finite}
Evan~G. Houston, Abdeslam Mimouni, and Mi~Hee Park.
\newblock Integrally closed domains with only finitely many star operations.
\newblock {\em Comm. Algebra}, 42(12):5264--5286, 2014.

\bibitem{struct-PIR}
Thomas~W. Hungerford.
\newblock On the structure of principal ideal rings.
\newblock {\em Pacific J. Math.}, 25:543--547, 1968.

\bibitem{knebush-II}
Manfred Knebusch and Tobias Kaiser.
\newblock {\em Manis valuations and {P}r\"{u}fer extensions. {II}}, volume 2103
  of {\em Lecture Notes in Mathematics}.
\newblock Springer, Cham, 2014.

\bibitem{okabe-matsuda}
Akira Okabe and Ry{\=u}ki Matsuda.
\newblock Semistar-operations on integral domains.
\newblock {\em Math. J. Toyama Univ.}, 17:1--21, 1994.

\bibitem{petro}
J.~W. Petro.
\newblock Some results on the asymptotic completion of an ideal.
\newblock {\em Proc. Amer. Math. Soc.}, 15:519--524, 1964.

\bibitem{starloc}
Dario Spirito.
\newblock Jaffard families and localizations of star operations.
\newblock {\em J. Commut. Algebra}, 11(2):265--300, 2019.

\bibitem{pvd-star}
Dario Spirito.
\newblock Vector subspaces of finite fields and star operations on
  pseudo-valuation domains.
\newblock {\em Finite Fields Appl.}, 56:17--30, 2019.

\bibitem{asymptotics-star}
Dario Spirito.
\newblock Asymptotics for the number of star operations on one-dimensional
  {N}oetherian domains.
\newblock in preparation.

\bibitem{starloc2}
Dario Spirito.
\newblock The sets of star and semistar operations on semilocal {P}r\"ufer
  domains.
\newblock {\em J. Commut. Algebra}, to appear.

\bibitem{vassilev_structure_2009}
Janet~C. Vassilev.
\newblock Structure on the set of closure operations of a commutative ring.
\newblock {\em J. Algebra}, 321(10):2737--2753, 2009.

\end{thebibliography}
\end{document}